\documentclass[reqno]{amsart}
\usepackage[T1]{fontenc}
\usepackage{amssymb, amsmath, color, textcomp, url,tikz}
\usetikzlibrary{matrix,arrows}
\usetikzlibrary{fit}
\usetikzlibrary{positioning}
\usepackage[labelformat=empty]{caption}
\usepackage{mdwlist}
\usepackage{etoolbox}

\RequirePackage{ifpdf}
\ifpdf
   \usepackage[pdftex]{hyperref}
\else
   \usepackage[hypertex]{hyperref}
\fi

\usepackage{enumerate,xspace}


\theoremstyle{plain}
\newtheorem{theorem}{Theorem}[section]
\newtheorem{cor}[theorem]{Corollary}
\newtheorem{prop}[theorem]{Proposition}
\newtheorem{lemma}[theorem]{Lemma}

\newtheorem*{claimstern}{Claim}
\newenvironment{claimsternproof}{\noindent\textit{Proof of
    Claim.}}{\hfill\qedsymbol \tiny{ Claim}
\medskip}

\newcounter{claimCount}
\setcounter{claimCount}{0}

\newenvironment{claim}{\medskip \vskip-1mm\noindent
  \refstepcounter{claimCount}\textbf{Claim~\arabic{claimCount}.}}{\medskip}

\newenvironment{claimproof}{\noindent\textit{Proof of
    Claim~\arabic{claimCount}.}}{\hfill\qedsymbol
  \tiny{ Claim~\arabic{claimCount}}
\medskip}

\AtBeginEnvironment{proof}{\setcounter{claimCount}{0}}

\theoremstyle{definition}
\newtheorem{remark}[theorem]{Remark}
\newtheorem{fact}[theorem]{Fact}
\newtheorem{definition}[theorem]{Definition}
\newtheorem{example}[theorem]{Example}
\newtheorem*{question}{Question}
\newtheorem*{notation}{Notation}

\newcommand{\nc}{\newcommand}

\nc{\Z}{\mathbb{Z}}
\nc{\N}{\mathbb{N}}
\nc{\Fp}{\mathbb{F}_p}
\nc{\Proj}{\mathbb{P}}
\nc{\K}{\mathit{K}}
\nc{\E}{\mathit{E}}
\nc\LL{\mathcal L}
\nc\LLD{\mathcal L_{D}}
\nc\Pred{\mathcal P}
\nc\car{\mathrm{char}}

\nc{\acfp}{\mathsf{ACFP}}
\nc{\dcfp}{\mathsf{DCF}_p}
\nc{\scfp}{\mathsf{SCF}_p}
\nc{\scfpe}{\mathsf{SCF}_{p,e}}
\nc{\scfpi}{\mathsf{SCF}_{p,\infty}}

\nc{\Land}{\bigwedge}
\nc{\proves}{\vdash}
\renewcommand{\phi}{\varphi}

\DeclareMathOperator{\dep}{Dep}
\DeclareMathOperator{\pdep}{\text{$p$}\hspace{0.12em}-Dep}
\DeclareMathOperator{\C}{C} 

\DeclareMathOperator{\W}{W}
\DeclareMathOperator{\id}{id}

\nc\ord{\operatorname{ord}}
\nc{\acl}{\operatorname{acl}}
\nc{\ldim}{\operatorname{ldim}}
\nc\inv{ ^{-1}}

\nc{\ldef}[2][]{\overline{\lambda}#1(#2)\!\downarrow}

\nc{\tp}{\operatorname{tp}}
\nc{\cf}{\text{cf. }}
\nc\icl{indiscernibly closed\xspace}

\nc\Ann{\operatorname{Ann}}
\nc{\Gr}{\operatorname{Gr}}
\nc{\pk}{\operatorname{Pk}}
\nc{\pluck}{\operatorname{\lrcorner}}

\makeatletter
\newcommand{\extp}{\@ifnextchar^\@extp{\@extp^{\,}}}
\def\@extp^#1{\mathop{\Land\nolimits^{\!#1}}}
\makeatother

\def\Ind#1#2{#1\setbox0=\hbox{$#1x$}\kern\wd0\hbox to 0pt{\hss$#1\mid$\hss}
\lower.9\ht0\hbox to 0pt{\hss$#1\smile$\hss}\kern\wd0}
\def\Notind#1#2{#1\setbox0=\hbox{$#1x$}\kern\wd0\hbox to
0pt{\mathchardef\nn="0236\hss$#1\nn$\kern1.4\wd0\hss}\hbox
to 0pt{\hss$#1\mid$\hss}\lower.9\ht0
\hbox to 0pt{\hss$#1\smile$\hss}\kern\wd0}
\def\ind{\mathop{\mathpalette\Ind{}}}

\def\ld{\mathop{\ \ \hbox to 0pt{\hss$\mid^{\hbox to
0pt{$\scriptstyle\mathrm{ld}$\hss}}$\hss}
\lower4pt\hbox to 0pt{\hss$\smile$\hss}\ \ }}


\begin{document}

\title{Equational theories of fields}
\date{March 22, 2018}

\author{ Amador Martin-Pizarro and Martin Ziegler}
\address{Mathematisches Institut,
  Albert-Ludwigs-Universit\"at Freiburg, D-79104 Freiburg, Germany}
\email{pizarro@math.uni-freiburg.de}
\email{ziegler@uni-freiburg.de}
\thanks{Research partially supported by the program
  MTM2014-59178-P. Additionally, the first author conducted research
  with support of the program ANR-13-BS01-0006 Valcomo.}
\keywords{Model Theory, Separably closed fields, Differentially closed
  fields, Equationality}
\subjclass{03C45, 12H05}

\begin{abstract}
  A complete first-order theory is equational if every definable set
  is a Boolean combination of instances of equations, that is, of
  formulae such that the family of finite intersections of instances
  has the descending chain condition. Equationality is a strengthening
  of stability. We show the equationality of the theory of proper
  extensions of algebraically closed fields of some fixed
  characteristic and of the theory of separably closed fields of
  arbitrary imperfection degree. Srour showed that the theory of
  differentially closed fields in positive characteristic is
  equational. We give also a different proof of his result.
\end{abstract}

\maketitle
\tableofcontents

\section{Introduction}

Consider a sufficiently saturated model of a complete theory $T$. A
formula $\phi(x;y)$ is an \emph{equation} (for a given partition of
the free variables into $x$ and $y$) if the family of finite
intersections of instances $\phi(x,a)$ has the descending chain
condition (DCC). The theory $T$ is \emph{equational} if every formula
$\psi(x;y)$ is equivalent modulo $T$ to a Boolean combination of
equations $\phi(x;y)$.

Quantifier elimination implies that the theory of algebraically closed
fields of some fixed characteristic is equational. Separably closed
fields of positive characteristic have quantifier elimination after
adding \emph{$\lambda$-functions}  to the ring language
\cite{fD88}. The \emph{imperfection degree} of a separably closed
field $K$ of positive characteristic $p$ encodes the linear dimension
of $K$ over $K^p$. If the imperfection degree is finite, restricting
the $\lambda$-functions to a fixed \emph{$p$-basis} yields again
equationality. A similar manipulation yields elimination of
imaginaries for separably closed field $K$ of positive characteristic
and finite imperfection degree, in terms of the field of definition of
the corresponding defining ideal. However, there is not an
explicit description of imaginaries for separably closed fields
$K$ of infinite imperfection degree, that is, when $K$ has infinite
linear dimension over the definable subfield $K^p$.

Another important (expansion of a) theory of fields having infinite
linear dimension over a definable subfield is the theory of an
algebraically closed field with a predicate for a distinguished
algebraically closed proper subfield. Any two such pairs are
elementarily equivalent if and only if they have the same
characteristic. They are exactly the models of the theory of Poizat's
\emph{belles paires} \cite{Po83} of algebraically closed fields.

Determining whether a particular theory is equational is not obvious.
So far, the only known \emph{natural} example of a stable
non-equational theory is the free non-abelian finitely generated group
\cite{zS13, MS17}. In this paper, we will prove the equationality of
several theories of fields: the theory of belles paires of
algebraically closed fields of some fixed characteristic, as well as
the theory of separably closed fields of arbitrary imperfection degree
We also give a new proof of the equationality of the theory of
differentially closed fields in positive characteristic, which was
established by Srour \cite{gS88}. In Section \ref{S:EqPairs} we
include an alternative proof for belles paires of characteristic $0$,
by showing that definable sets are Boolean combination of certain
definable sets, which are Kolchin-closed in the corresponding
expansion $\mathsf{DCF}_0$. A similar approach appeared already in
\cite{aG16} using different methods. We generalise this approach to
arbitrary characteristic in Section \ref{S:LinFor}.

\section{Equations and indiscernible sequences}

Most of the results in this section come from \cite{PiSr84, Ju00,
  mJdL01}. We refer the  avid reader to \cite{aO11} for a gentle
introduction to equationality.

We work inside a sufficiently saturated model $\mathbb U$ of a
complete theory $T$. A formula $\phi(x;y)$, with respect to a given
partition of the free variables into $x$ and $y$, is an
\emph{equation} if the family of finite intersections of instances
$\phi(x,b)$ has the descending chain condition (DCC). If $\phi(x;y)$
is an equation, then so are $\phi\inv(y;x)=\phi(x,y)$ and
$\phi(f(x);y)$, whenever $f$ is a $\emptyset$-definable map. Finite
conjunctions and disjunctions of equations are again equations. By an
abuse of notation, given an incomplete theory, we will say that a
formula is an equation if it is an equation in every completion of the
theory.

\noindent The theory $T$ is \emph{equational} if every formula $\psi(x;y)$ is
equivalent modulo $T$ to a Boolean combination of equations $\phi(x;y)$.

Typical examples of equational theories are the theory of an
equivalence relation with infinite many infinite classes, the theory
of $R$-modules.

\begin{example}\label{E:hilbert}
  In any field $K$, for every polynomial $p(X,Y)$ with integer
  coefficients, the equation $p(x;y)\doteq 0$ is an equation in the
  model-theoretic sense.
\end{example}
\begin{proof}
  This follows immediately from Hilbert's Basis Theorem, which implies
  that the Zariski topology on $K^n$ is noetherian, i.e.\ the system
  of all algebraic sets
  \[\Bigl\{a\in K^n\Bigm|\Land_{i=1}^m q_i(a)=0\Bigr\},\] where
  $q_i\in K[X_1,\dots,X_n]$, has the DCC.

  \noindent There is a simpler proof, without using Hilbert's Basis
  Theorem: Observe first that $p(x;y)\doteq 0$ is an equation, if $p$
  is linear in $x$, since then $p(x;a)\doteq 0$ defines a subspace of
  $K^n$. Now, every polynomial has the form $q(M_1,\ldots,M_m;y)$,
  where $q(u_1,\dotsc,u_m;y)$ is linear in the $u_i$, for some
  monomials $M_1,\ldots, M_m$ in $x$.
\end{proof}

\noindent Quantifier elimination for the incomplete) theory
$\mathsf{ACF}$ of algebraically closed fields and the above example
yield that $\mathsf{ACF}$ is equational.

Equationality is preserved under unnaming parameters and
bi-interpretability \cite{Ju00}. It is unknown whether equationality
holds if every formula $\phi(x;y)$, with $x$ a single variable, is
a boolean combination of equations.

By compactness, a formula $\phi(x;y)$ is an equation if there is no
indiscernible sequence $(a_i,b_i)_{i\in\N}$ such that
$\phi(a_i,b_j)$ holds for $i<j$, but $\not\models\phi(a_i,b_i)$.
Thus, equationality implies stability \cite{PiSr84}. In stable
theories, \emph{non-forking} provides a natural notion of
independence. Working inside a sufficiently saturated model, we say
that two sets $A$ and $B$ are independent over a common subset $C$,
denoted by $A\ind_C B$, if, for every finite tuple $a$ in $A$, the
type $\tp(a/B)$ does not fork over $C$. Non-forking extensions of a
type over an elementary substructure $M$ to any set $B\supset M$ are
both heir and definable over $M$.

\begin{definition}\label{D:heir_def}

  A type $q$ over $B$ is an \emph{heir} of its restriction
  $q\!\restriction\!M$ to the elementary substructure $M$ if, whenever
  the formula $\phi(x,m,b)$ belongs to $q$, with $m$ in $M$ and $b$ in
  $B$, then there is some $m'$ in $M$ such that $\phi(x,m,m')$ belongs
  to $q\!\restriction\!M$.

  \noindent A type $q$ over $B$ is definable over $M$ if, for each
  formula $\phi(x,y)$, there is a formula $\theta(y)$ with
  parameters in $M$ such that for every $b$ in $B$,
$$\phi(x,b) \in q \text{ if and only if } \models \theta(b).$$
\end{definition}

\noindent Observe that if $q$ is definable over $M$, for any formula
$\phi(x,y)$, any two such formulae $\theta(y)$ are equivalent modulo
$M$, so call it the \emph{$\phi$-definition} of $q$.

If $\phi$ is an equation, the $\phi$-definition of a type $q$ over $B$ is
particularly simple. The intersection
\[\bigcap\limits_{\phi(x,b)\in q} \phi(\mathbb U,b)\]
is a definable set given by a formula $\psi(x)$ over $B$ contained in
$q$. If suffices to set
\[\theta(y) = \forall x\left( \psi(x) \rightarrow
\phi(x,y)\right).\]

By the above characterisation, a formula $\phi(x;y)$ is an equation
if and only if every instance $\phi(a,y)$ is \icl definable sets
\cite[Theorem 3.16]{mJdL01}. A definable set is \emph{\icl} if,
whenever $(b_i)_{i\leq \omega}$ is an indiscernible sequence such that
$b_i$ lies in $X$ for $i < \omega$, then so does $b_\omega$.

\noindent Extending the indiscernible sequence so that it becomes a Morley
sequence over an initial segment, we conclude the following:

\begin{lemma}\label{L:icl_model}
In a complete stable theory $T$, a definable set $\phi(a,y)$ is
\icl if, for every elementary substructure $M$ and every Morley sequence
$(b_i)_{i\leq \omega}$ over $M$ such that

\[ a\ind_M b_i \text{ with } \models \phi(a,b_i) \text{ for } i
<\omega,\]
then $b_\omega$ realises  $\phi(a,y)$ as well.

\noindent We may take the sequence of length $\kappa+1$, for every
infinite cardinal $\kappa$, and assume that
$ a\ind_M \{b_i\}_{i<\kappa}$.
\end{lemma}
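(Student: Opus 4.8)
The plan is to prove the stated direction: assuming the displayed condition on Morley sequences, the set $\phi(a,y)$ is \icl. I would argue contrapositively, turning an arbitrary witness to non-closure into a Morley witness that violates the hypothesis. So suppose $(b_i)_{i\le\omega}$ is indiscernible with $\models\phi(a,b_i)$ for $i<\omega$ but $\not\models\phi(a,b_\omega)$; the task is to manufacture a Morley sequence over an elementary substructure, independent from $a$, exhibiting the same defect.

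First I would pass to the global average type $q$ over $\mathbb U$ of the initial part $(b_i)_{i<\omega}$. Since $\models\phi(a,b_i)$ for every $i<\omega$, the formula $\phi(a,y)$ lies in $q$; on the other hand the bad point $b_\omega$, which realises $q$ over $\{b_i\}_{i<\omega}$, witnesses that $\neg\phi(a,y)$ is consistent with the restriction of $q$ to the sequence. As the canonical base $\operatorname{Cb}(q)$ lies in the definable closure of $\{b_i\}_{i<\omega}$, the weaker restriction $q\!\restriction\!\operatorname{Cb}(q)$ is then also consistent with $\neg\phi(a,y)$. This isolates the obstruction at the canonical base, where it no longer depends on the particular sequence we started with.

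Next I would choose an elementary substructure $M\supseteq\operatorname{Cb}(q)$ with $a\ind_{\operatorname{Cb}(q)}M$ and build a Morley sequence $(c_j)_{j<\omega}$ in $q$ over $Ma$. By construction this is a Morley sequence over $M$ with $a\ind_M\{c_j\}_{j<\omega}$, and $\models\phi(a,c_j)$ for each $j$ since $\phi(a,y)\in q$. The crux is to find a continuation $c_\omega$ realising $q$ over $M\{c_j\}_{j<\omega}$, so that $(c_j)_{j\le\omega}$ stays a Morley sequence over $M$, yet with $\neg\phi(a,c_\omega)$. Such a $c_\omega$ is obtained by transporting the witness of the previous paragraph along a non-forking extension anchored at $\operatorname{Cb}(q)$: using $a\ind_{\operatorname{Cb}(q)}M$ together with the independence of the $c_j$, the consistency of $\neg\phi(a,y)$ with $q\!\restriction\!\operatorname{Cb}(q)$ propagates to consistency with $q\!\restriction\!M\{c_j\}_{j<\omega}$. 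The resulting sequence satisfies all hypotheses of the displayed condition but has $\not\models\phi(a,c_\omega)$, a contradiction; hence no bad indiscernible sequence exists and $\phi(a,y)$ is \icl.

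I expect the main obstacle to be precisely this last step: producing the bad continuation after the base has been enlarged from $\operatorname{Cb}(q)$ to the model $M$ and the independent sample $\{c_j\}$. Enlarging the parameter set only adds constraints, so the surviving $\neg\phi$-realisation cannot be read off directly and must be located by a careful non-forking transfer; this is exactly where the independence $a\ind_M\{c_j\}_{j<\omega}$ is indispensable. Finally, the very same construction delivers a Morley sequence of any prescribed length $\kappa+1$ with $a\ind_M\{c_j\}_{j<\kappa}$, which yields the strengthening recorded in the last sentence of the statement.
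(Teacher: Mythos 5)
Your argument is correct, and it is in essence the argument the paper compresses into its one-line hint (``extending the indiscernible sequence so that it becomes a Morley sequence over an initial segment''): you pass to the average type $q$ of the sequence, note that its canonical base sits in the definable closure of $\{b_i\}_{i<\omega}$, realise $q$ as a Morley sequence over a model $M$ chosen independent from $a$ over that base, and then use transitivity of non-forking together with stationarity of $q$ over its canonical base to append a copy of the bad point $b_\omega$ while keeping $\neg\phi(a,y)$ --- which is exactly the non-forking transfer you flag as the crux, and it does go through. The only packaging differences from the paper's intent are that you argue contrapositively and build a fresh Morley sequence rather than extending the given one in place; these are cosmetic.
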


In \cite[Theorem 2.5]{gS88}, Srour stated a different criterion for
the equationality of a formula. Let us provide a version of his result.
Given a formula $\phi(x,y)$ and a type $p$ over $B$, denote

\[ p_\phi^+= \{ \phi(x,b)\mid\phi(x,b) \in p\}.\]

\begin{lemma}\label{L:Srour}
  Given a formula $\phi(x;y)$ in a stable theory $T$, the following
  are equi\-valent:
  \begin{enumerate}
  \item\label{L:Srour_eq} The formula $\phi(x;y)$ is an equation.
  \item\label{L:Srour_fteset} Given a tuple $a$ of length $|x|$ and a
    subset $B$, there is a finite subset $B_0$ of $B$ such that
    \[\tp_\phi^+ (a/B_0) \proves \tp_\phi^+(a/B).\]
  \item\label{L:Srour_ind} There is a regular cardinal $\kappa > |T|$
    such that, for any tuple $a$ of length $|x|$ and any elementary
    substructures $M\subset N$ with $a\ind_M N$ and $|N|=\kappa$,
    there is a subset $B_0$ of $N$ with $|B_0|<\kappa$ such that
    \[ \tp(a/MB_0) \proves \tp_\phi^+(a/N).\]
  \end{enumerate}
\end{lemma}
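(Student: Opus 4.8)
The plan is to prove the cycle of implications \ref{L:Srour_eq}$\Rightarrow$\ref{L:Srour_fteset}$\Rightarrow$\ref{L:Srour_ind}$\Rightarrow$\ref{L:Srour_eq}, the last of which carries all the weight. The two easy directions I would dispatch first. For \ref{L:Srour_eq}$\Rightarrow$\ref{L:Srour_fteset}, recall the elementary fact that a family of sets closed under finite intersections has the DCC exactly when every intersection of members of the family is already a finite subintersection. Assuming $\phi$ is an equation and given $a$ and $B$, I apply this to the instances $\phi(\mathbb U,b)$ with $b\in B$ and $\models\phi(a,b)$; their intersection coincides with a finite subintersection indexed by some finite $B_0\subseteq B$. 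Since $\tp_\phi^+(a/B)$ consists precisely of the instances $\phi(x,b)$ with $b\in B$ satisfied by $a$, this is exactly $\tp_\phi^+(a/B_0)\proves\tp_\phi^+(a/B)$. The implication \ref{L:Srour_fteset}$\Rightarrow$\ref{L:Srour_ind} is then purely formal, using neither the independence hypothesis nor that $M$ and $N$ are models: for any regular $\kappa>|T|$ and any $a\ind_M N$ with $|N|=\kappa$, apply \ref{L:Srour_fteset} to $B=N$ to obtain a finite $B_0\subseteq N$ with $\tp_\phi^+(a/B_0)\proves\tp_\phi^+(a/N)$; as $\tp(a/MB_0)$ extends $\tp_\phi^+(a/B_0)$ and $|B_0|<\kappa$, this is what is required.

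For the main implication \ref{L:Srour_ind}$\Rightarrow$\ref{L:Srour_eq} I would use that $\phi$ is an equation precisely when every instance $\phi(a,y)$ is \icl, combined with Lemma \ref{L:icl_model} applied with the regular cardinal $\kappa$ furnished by \ref{L:Srour_ind}. After the standard reduction to a base model $M$ of size at most $\kappa$, it suffices to start from a Morley sequence $(b_i)_{i\le\kappa}$ over $M$ with $a\ind_M\{b_i\}_{i<\kappa}$ and $\models\phi(a,b_i)$ for all $i<\kappa$, and to deduce $\models\phi(a,b_\kappa)$. The first step is to build an elementary substructure $N$ of size $\kappa$ containing $C:=M\cup\{b_i\mid i<\kappa\}$ and satisfying $N\ind_C ab_\kappa$; transitivity of forking then gives simultaneously $a\ind_M N$, so that \ref{L:Srour_ind} applies, and $b_\kappa\ind_M N$. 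Feeding this $N$ into \ref{L:Srour_ind} produces a subset $B_0\subseteq N$ with $|B_0|<\kappa$ and $\tp(a/MB_0)\proves\tp_\phi^+(a/N)$.

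It remains to match $b_\kappa$ with a term of the sequence lying in $N$. By local character, $\tp(B_0/C)$ does not fork over $M\cup\{b_i\mid i\in W\}$ for some $W$ with $|W|<\kappa$, whence $\sup W<\kappa$ by regularity of $\kappa$; for any index $j$ with $\sup W<j<\kappa$ one then gets $b_j\ind_M B_0$, while $b_\kappa\ind_M B_0$ holds since $b_\kappa\ind_M N\supseteq B_0$. As all the $b_j$ and $b_\kappa$ realise the same type over the model $M$, stationarity yields $\tp(b_j/MB_0)=\tp(b_\kappa/MB_0)$. Now $b_j\in N$ and $\models\phi(a,b_j)$, so $\phi(x,b_j)\in\tp_\phi^+(a/N)$ and therefore $\tp(a/MB_0)\proves\phi(x,b_j)$; an automorphism fixing $MB_0$ pointwise and sending $b_j$ to $b_\kappa$ transports this to $\tp(a/MB_0)\proves\phi(x,b_\kappa)$, and since $a$ itself realises $\tp(a/MB_0)$ we conclude $\models\phi(a,b_\kappa)$. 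The main obstacle, as I see it, lies in arranging the two independence requirements in the construction of $N$ and in using stationarity to align $b_\kappa$ with a sequence term $b_j$ inside $N$; granting these, the automorphism transport of the instance $\phi(x,b_j)$ to $\phi(x,b_\kappa)$ is the decisive and essentially immediate final move.
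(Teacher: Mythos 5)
Your proof is correct, and its skeleton coincides with the paper's: the same cycle of implications, the same DCC characterisation for (\ref{L:Srour_eq})$\Rightarrow$(\ref{L:Srour_fteset}), and for (\ref{L:Srour_ind})$\Rightarrow$(\ref{L:Srour_eq}) the same reduction to indiscernible closure via Lemma \ref{L:icl_model}, followed by embedding the Morley sequence into an elementary substructure $N$ of size $\kappa$ with $a\ind_M N$, applying (\ref{L:Srour_ind}) to obtain $B_0$, and matching $b_\kappa$ with some $b_j$ lying in $N$ over $MB_0$ before the final automorphism transport. Where you genuinely diverge is in the mechanism producing that match. The paper builds a continuous chain $(N_i)_{i<\kappa}$ of elementary substructures with $b_{<i}\subseteq N_i$, arranging at each successor step that the tail $(b_j)_{i\le j\le\kappa}$ stays indiscernible over $N_i$ and that $a$ remains independent from $N_i$ together with the tail; regularity of $\kappa$ then places $B_0$ inside some $N_i$, and tail-indiscernibility gives $b_i\equiv_{N_i}b_\kappa$, hence $b_i\equiv_{MB_0}b_\kappa$, with no appeal to stationarity. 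You instead take $N$ in a single step, independent from $ab_\kappa$ over $M\cup\{b_i\}_{i<\kappa}$, and use local character of forking to confine the interaction of $B_0$ with the sequence to a set $W$ of indices of size $<\kappa$; stationarity of types over the model $M$ then yields $b_j\equiv_{MB_0}b_\kappa$ for any $j$ beyond $\sup W$. Both devices do the same job --- ensuring that $B_0$ only ``sees'' a proper initial part of the sequence --- but yours trades the transfinite bookkeeping of the chain for local character plus stationarity, which is arguably lighter; the price is that your argument leans on stationarity over models, whereas the paper's chain argument would survive in settings where one only has indiscernibility to work with. The remaining steps (the routine forking calculus giving $a\ind_M N$, $b_\kappa\ind_M N$ and $b_j\ind_M B_0$) are all sound.
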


\begin{proof}

For $(\ref{L:Srour_eq})\Longrightarrow (\ref{L:Srour_fteset})$, we
observe that the intersection

\[\bigcap\;\{\phi(\mathbb U,b)\mid\phi(x,b)\in\tp_\phi^+(a/B)\}\]

\noindent is a finite intersection with parameters in a finite subset
$B_0$ of $B$. The implication $(\ref{L:Srour_fteset})\Longrightarrow
(\ref{L:Srour_ind})$ is immediate. For
$(\ref{L:Srour_ind})\Longrightarrow (\ref{L:Srour_eq})$, it suffices
to show that the set $\phi(a,y)$ is \icl, for every tuple $a$ of
length $|x|$. By Lemma \ref{L:icl_model}, let $M$ be an elementary
substructure and $(b_i)_{i\leq \kappa}$ a Morley sequence over $M$
such that
\[ a\ind_M (b_i)_{i<\kappa} \text{ and } \models \phi(a,b_i)
\text{ for } i <\kappa.\]

\noindent We construct a continuous chain of elementary substructures
$(N_i)_{i<\kappa}$, each of cardinality at most $\kappa$ containing $M$, such
that:

\begin{itemize}
\item the sequence $(b_j)_{i\leq j\leq \kappa}$ remains indiscernible over
  $N_i$;
\item $b_{<i}$ is contained in $N_i$;
\item $a\ind_M N_i\cup (b_j)_{i\leq j<\kappa} .$
\end{itemize}

\noindent Set $N_0=M$. For $i<\kappa$ limit ordinal, set
$$N_i=\bigcup\limits_{j<i} N_j.$$ Thus, we need only consider
the successor case. Suppose $N_i$ has already been constructed and let
$N_{i+1}$ be an elementary substructure of cardinality at most
$\kappa$ containing $N_i\cup\{b_i\}$ such that

\[ N_{i+1}\ind_{N_i\cup\{b_i\}} a\cup  (b_j)_{i< j\leq \kappa}.\]

\noindent Observe that the sequence $(b_j)_{i< j\leq \kappa}$ remains
indiscernible over $N_{i+1}$. By monotonicity applied to the above
independence, we have that
\[ N_{i+1}\ind_{N_i\cup  (b_j)_{i\leq  j\leq \kappa}} a,\]
\noindent so, by transitivity,
\[ a \ind_M N_{i+1}\cup  (b_j)_{i<  j\leq \kappa},\]
\noindent as desired.

The elementary substructure $N=\bigcup\limits_{i<\kappa} N_i$ has
cardinality $\kappa$. Finite character implies that $a\ind_M N$. By
hypothesis, there is a subset $B_0$ of $N$ of cardinality strictly
less than $\kappa$ such that
\[ \tp(a/MB_0) \proves \tp_\phi^+(a/N).\]
\noindent Regularity of $\kappa$ yields that $B_0\subset N_i$ for some
$i<\kappa$. In particular, the elements $b_i$ and $b_\kappa$ have the
same type over $N_i$, and therefore over $MB_0$. Let $\tilde a$ such
that $\tilde a b_i\equiv_{MB_0} ab_\kappa$. Since
\[\tp_\phi^+(a/N)\supset \{\phi(x,b_j)\}_{j<\kappa},\]
\noindent we conclude that $\models \phi(\tilde a,b_i)$, and thus
$\models \phi(a, b_\kappa)$, as desired.
\end{proof}

\begin{remark}\label{R:Srour}
  Whenever $a\ind_M N$, the type $\tp(a/N)$ is definable with the same
  definition schema as the one of $\tp(a/M)$. In particular, we can
  add a fourth equivalence to Lemma \ref{L:Srour}: the formula
  $\phi(x;y)$ is an equation if and only if,
  whenever $a \ind_M N$, then
   \[ \tp(a/M) \proves \tp_\phi^+(a/N).\]
\end{remark}

We will finish this section with an observation on imaginaries in
equational theo\-ries.

\begin{lemma}\label{L:eqEI}
  Assume that there is a collection $\mathcal F$ of equations, closed under
  finite conjuntions, such that every formula is a boolean
  combination of instances of formulae in $\mathcal F$. If every
  instance of an equation in $\mathcal F$ has a real
  canonical parameter, then the theory has weak elimination of imaginaries.
\end{lemma}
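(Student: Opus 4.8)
The plan is to reduce weak elimination of imaginaries to the following statement: for every definable set $X$ there is a real tuple $c$ which is a \emph{weak canonical parameter}, in the sense that $X$ is definable over $c$ and $c\in\acl^{\mathrm{eq}}(\ulcorner X\urcorner)$. This reduction is the standard one: every imaginary is interdefinable with the canonical parameter $\ulcorner a/E\urcorner$ of a definable set, namely a class $a/E$ of a $\emptyset$-definable equivalence relation $E$, and a real weak canonical parameter of that class is exactly a real tuple witnessing weak elimination of imaginaries for the given imaginary.

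Fix then a definable set $X$ and write it, using the hypothesis, as a Boolean combination of instances of finitely many equations $\phi_1,\dots,\phi_m\in\mathcal F$. Indexing by a finite sort, the formula $\Phi(x;s,y)=\phi_s(x;y)$ is again an equation: along any indiscernible sequence the value of the finite-sort coordinate $s$ is constant, so a putative witness to non-equationality for $\Phi$ would already be one for some single $\phi_s$. Hence the finite intersections of instances of $\phi_1,\dots,\phi_m$ satisfy the descending chain condition and form the closed sets of a noetherian topology on the ambient sort, in which $X$ is constructible. Since $\mathcal F$ is closed under finite conjunctions, each such closed set is itself an instance of an equation in $\mathcal F$, and therefore carries a real canonical parameter by assumption. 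Because closure is an automorphism-equivariant operation, the closure $\bar X$ — the least closed set containing $X$, which exists by the descending chain condition — satisfies $\ulcorner\bar X\urcorner\in\operatorname{dcl}^{\mathrm{eq}}(\ulcorner X\urcorner)$, and its real canonical parameter lies in $\acl^{\mathrm{eq}}(\ulcorner X\urcorner)$.

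I would then run a noetherian induction on $\bar X$. Writing $U$ for the interior of $X$ inside $\bar X$ and $F=\bar X\setminus U$ for the corresponding closed set, the locally closed set $U=\bar X\setminus F$ is defined over the real canonical parameters of the two $\mathcal F$-instances $\bar X$ and $F$, both of which lie in $\acl^{\mathrm{eq}}(\ulcorner X\urcorner)$ by equivariance. The remainder $X\setminus U$ is constructible with strictly smaller closure, since removing the interior leaves a set that is nowhere dense in $\bar X$; by the induction hypothesis it admits a real weak canonical parameter, again lying in $\acl^{\mathrm{eq}}(\ulcorner X\urcorner)$ as $X\setminus U$ is obtained from $X$ by equivariant operations. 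Collecting these finitely many real parameters yields the desired tuple $c$: it defines $X$, because $X=(\bar X\setminus F)\cup(X\setminus U)$, and it lies in $\acl^{\mathrm{eq}}(\ulcorner X\urcorner)$. The delicate point, and the step I expect to require the most care, is precisely the guarantee of a rank drop: the naive choice $\bar X\setminus X$ need not have smaller closure when $X$ is simultaneously dense and codense in $\bar X$, so one must peel off the interior $U$ in order to make $X\setminus U$ nowhere dense and ensure that the induction on the noetherian rank genuinely terminates.
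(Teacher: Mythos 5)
Your overall strategy --- a noetherian induction on definable sets in the topology generated by instances of the equations in $\mathcal F$ --- is genuinely different from the paper's, which is a three-line argument via canonical bases: in a stable theory, weak elimination of imaginaries follows once every global type $q$ has a real canonical base; for $\varphi\in\mathcal F$ the $\varphi$-definition of $q$ is controlled by the intersection $\psi$ of all instances $\varphi(x,b)\in q$, which by the DCC is a finite intersection, hence (by closure under conjunction) an instance of a formula in $\mathcal F$ with a real canonical parameter; and since every formula is a Boolean combination of instances of $\mathcal F$-formulae, these parameters already form a canonical base. Your route can be completed, but as written it has a gap precisely at the step where the hypothesis is invoked.

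The gap is this: for the interior/closure induction you need the closed sets to be closed under finite unions, so the closed sets of the relevant topology are \emph{finite unions} of finite intersections of instances. Since $\mathcal F$ is only assumed closed under conjunctions, such a closed set --- in particular $\bar X$ and $F=\overline{\bar X\setminus X}$ --- need not be an instance of a formula in $\mathcal F$, and the hypothesis does not directly hand you a real canonical parameter for it. (If instead you take only the finite intersections as your ``closed sets'', the induction can stall: when the least basic closed set $Z\supseteq X$ strictly contains the closure of $X$ in the full topology and $Z$ is irreducible, $Z\setminus X$ contains a nonempty open subset of $Z$ and is therefore dense in $Z$, so $F=Z=\bar X$, $U=\emptyset$, and there is no rank drop.) The repair is standard but not free: decompose each closed set into irreducible components, note that irreducible closed sets of the generated lattice are basic, hence instances of $\mathcal F$-formulae carrying real canonical parameters, and that the finite set of components is permuted by any automorphism fixing the union --- so these parameters are only \emph{algebraic}, not definable, over $\ulcorner X\urcorner$, which is in fact where ``weak'' elimination genuinely enters your argument. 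Relatedly, the rank-drop claim itself (that a nonempty constructible set contains a nonempty relatively open subset of its closure, so that $X\setminus U$ has strictly smaller closure) is asserted rather than proved and also requires the irreducible decomposition. Both repairs are routine, but they are missing, and they make your argument substantially longer than the paper's canonical-base proof.
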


\begin{proof}

Since the theory is stable, it suffices to show that every global type
$q$ has a real canonical base. As in Definition \ref{D:heir_def}, we
need only include the canonical parameters of the
$\varphi$-definition of every formula $\varphi$ in $\mathcal
F$. Observe that the corresponding formula $\psi(x)$ in $q$ is an
instance of a formula in $\mathcal F$.
\end{proof}


\section{Basics on fields}\label{S:fields}

In this section, we will include some basic notions of field theory
and commutative algebra needed in order to prove the equationality of
the theories of fields we will consider later on. We will work inside
somesufficiently large algebraically closed field $\mathbb U$.

Two subfields $L_1$ and $L_2$ are \emph{linearly
disjoint} over a common subfield $F$, denoted by
\[ L_1 \ld_F L_2,\] if, whenever the elements $a_1,\ldots,a_n$ of
$L_1$ are linearly independent over $F$, then they remain so over
$L_2$, or, equivalently, if $L_1$ has a linear basis over $F$ which
is linearly independent over $L_2$.

\noindent Linear disjointness implies algebraic independence and
agrees with the latter whenever the base field $F$ is algebraically
closed. Let us note that linear disjointness is symmetric, and a
transitive relation: If $F\subset D_2\subset L_2$ is a subfield,
denote by $D_2\cdot L_1$ the field generated by $D_2$ and $L_1$. Then
\[L_1\ld_F L_2\] if and only if
\[ L_1\ld_F D_2 \hskip1cm \text{ and }\hskip1cm D_2\cdot
L_1\ld_{D_2} L_2.\]

\noindent By multiplying with a suitable denominator, we may also use
the terminology for a subring $A$ being linearly disjoint from a
field$B$ over a common subring $C$.

\begin{definition}\label{D:spec}
  Consider a theory $T$ of fields in the language $\LL$ extending the
  language of rings $\LL_{rings}=\{+,-,\,\cdot\,,0,1\}$ such that
  there is a predicate $\Pred$, which is interpreted in every model of
  $T$ as a definable subfield. A subfield $A$ of a sufficiently
  saturated model $K$ of $T$ is $\Pred$-\emph{special} if
\[ A\ld_{\Pred(A)} \Pred(K),\] where $\Pred(A)$ equals $\Pred(K)\cap
A$.
\end{definition}
\noindent It is easy to see that elementary substructures of $K$ are
$\Pred$-special.
\begin{lemma}\label{L:coh}
  Inside a sufficiently saturated model $K$ of a stable theory $T$ of
  fields in the language $\LL\supset \LL_{rings}$ equipped with a
  definable subfield $\Pred(K)$, consider a $\Pred$-special
  field $A$ and a field $B$, both containing an elementary
  substructure $M$ of $K$ such that $A\ind_M B$. The
  fields $\Pred(K)\cdot A$ and $\Pred(K)\cdot B$  are linearly disjoint
  over $\Pred(K)\cdot M$.
\end{lemma}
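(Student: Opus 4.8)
The plan is to translate the desired linear disjointness into a statement about a single well-chosen spanning family and then feed it to the heir/definability machinery of Section~2. Write $P=\Pred(K)$, so that $\Pred(A)=P\cap A$ and $\Pred(M)=P\cap M$, and recall that $M$ is $\Pred$-special because $M\preceq K$. The key structural remark is that the base field of the conclusion, $PM=P\cdot M$, already contains all of $P$; this is what lets us disregard the a priori uncontrolled overlap $P\cap B$ (note that $B$ is \emph{not} assumed $\Pred$-special), since any element of $P\cap B$ lands in the base $PM$ and so creates no obstruction.

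First I would carry out the purely field-theoretic step: using that $A$ is $\Pred$-special, $A\ld_{\Pred(A)}P$, and that $M$ is $\Pred$-special, $M\ld_{\Pred(M)}P$, I claim that any basis $\{a_j\}$ of $A$ over $M\cdot\Pred(A)$ is simultaneously a basis of $PA$ over $PM$. Spanning is immediate since $PM$ contains both $M\cdot\Pred(A)$ and $P$. For linear independence over $PM$ one uses symmetry and transitivity of $\ld$: a basis of $PM$ over $M\cdot\Pred(A)$ may be taken to be a basis of $P$ over $\Pred(A)$ (this is where $M$-specialness enters), and such a family is linearly independent over $A$ exactly because $A\ld_{\Pred(A)}P$. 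Thus it suffices to take finitely many $a_1,\dots,a_n\in A$ that are linearly independent over $PM$ and show they remain linearly independent over $PB$; applying this to finite subsets of the basis $\{a_j\}$ then yields $PA\ld_{PM}PB$.

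The heart of the argument is the model-theoretic step, and the main obstacle it overcomes is that forking controls $A$ against $B$ but not against $P\cdot B$, so one cannot simply apply ``independence over a model gives linear disjointness over the model'' to the pair $A$, $PB$. The resolution exploits that $\Pred$ is definable. Suppose $a_1,\dots,a_n\in A$ are linearly independent over $PM$ yet satisfy a nontrivial relation $\sum_j g_j a_j=0$ with $g_j\in PB$; writing $g_j=\sum_k p_{jk}b_{jk}$ with $p_{jk}\in P$ and $b_{jk}\in B$, consider the formula
\[
\phi(\bar x;\bar b)\;:\equiv\;\exists\,(p_{jk})\in P\ \Bigl[\ \sum_j\bigl(\textstyle\sum_k p_{jk}b_{jk}\bigr)x_j=0\ \wedge\ \bigvee_j\ \textstyle\sum_k p_{jk}b_{jk}\neq 0\ \Bigr],
\]
whose parameters $\bar b=(b_{jk})$ lie in $B$ and whose $P$-coefficients are quantified away legitimately since $\Pred$ is a predicate of the language. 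Then $\models\phi(\bar a,\bar b)$, so $\phi(\bar x,\bar b)\in\tp(\bar a/MB)$. Since $A\ind_M B$ with $M\preceq K$, this type is an heir of $\tp(\bar a/M)$, so there is $\bar b'\in M$ with $\models\phi(\bar a,\bar b')$. The witnessing coefficients $g'_j=\sum_k p'_{jk}b'_{jk}$ then lie in $P\cdot M=PM$ and are not all zero, producing a nontrivial $PM$-linear relation among the $a_j$, which contradicts their independence over $PM$. Hence no such relation exists and $PA\ld_{PM}PB$, as desired. I would finally note that one could equally invoke definability of $\tp(\bar a/MB)$ over $M$ together with elementarity of $M\preceq K$ in place of the heir property; the essential point in either case is that the definable subfield $P$ may be existentially quantified, so that the $B$-parameters alone are seen by the independence hypothesis.
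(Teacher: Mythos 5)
Your proof is correct and follows essentially the same route as the paper's: reduce to finitely many elements of $A$, clear denominators, and apply the heir property of the nonforking extension to a formula in which the $\Pred$-coefficients of a putative $\Pred(K)\cdot B$-linear relation are existentially quantified, so that the $B$-parameters can be pulled down into $M$. The only real variation is that you build the nontriviality clause $\bigvee_j g_j\neq 0$ directly into the formula, which makes the endgame immediate and shifts the use of $\Pred$-specialness of $A$ entirely into your preliminary basis computation, whereas the paper instead requires the witnessing tuple to be $\Pred(K)$-linearly independent and invokes specialness at the final step to extract a nonzero coefficient.
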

\noindent Note that we write $F\cdot F'$\/ for the field generated by
$F$ and $F'$.
\begin{proof}
  It suffices to show that elements $a_1,\ldots,a_n$ of $A$ which are
  linearly dependent over $\Pred(K)\cdot B$ are also linearly
  dependent over $\Pred(K)\cdot M$. Thus, let $z_1,\ldots,z_n$ in
  $\Pred(K)\cdot B$, not all zero, such that \[ \sum\limits_{i=1}^n
  a_i\cdot z_i=0.\]

\noindent Multiplying by a suitable denominator, we may assume
that all the $z_i$'s lie in the subring generated by $\Pred(K)$ and
$B$, so

\[z_i = \sum\limits_{j=1}^m \zeta^i_j b_j,\] for some $\zeta^i_j$'s
in$\Pred(K)$ and $b_1,\ldots, b_m$ in $B$, which we may assume to be
linearly independent over $\Pred(K)$.

\noindent The type $\tp(a_1,\ldots, a_n/Mb_1,\ldots b_m)$ is a
nonforking extension of $\tp(a_1,\ldots, a_n/M)$, so in particular is
a
heir over $M$.  Thus, there are
some $\eta^i_j$'s in $\Pred(K)$, not all zero, and $c_1,\ldots, c_m$
in $M$ linearly independent over $\Pred(K)$, such that

\[ \sum\limits_{i=1}^n a_i\sum\limits_{j=1}^m \eta^i_j c_j=0.\]

\noindent Since $A$ is $\Pred$-special, we may assume all the
$\eta^i_j$'s lie in $\Pred(A)$.  As $\{c_j\}_{1\leq j\leq m}$ are
$\Pred$-linearly independent, at least one of the elements in
\[ \{ \sum\limits_{1\leq j\leq m} \eta^1_j c_j, \ldots,
\sum\limits_{1\leq j\leq m}
  \eta^n_j c_j \}\]

\noindent is different from $0$, as desired.

\end{proof}

A natural example of a definable subfield is the field of
$p^\text{th}$ powers $K^p$, whenever $K$ has positive characteristic
$p>0$. The corresponding notion of $K^p$-special is
\emph{separability}: A non-zero polynomial $f(T)$ over a subfield $K$
is separable if every root (in the algebraic closure of $K$) has
multiplicity $1$, or equivalently, if $f$ and its formal derivative
$\frac{\partial f}{\partial T}$ are coprime. Whenever $f$ is
irreducible, the latter is equivalent to $\frac{\partial f}{\partial
  T}\neq 0$. In particular, every non-constant polynomial in
characteristic $0$ is separable. In positive characteristic $p$, an
irreducible polynomial $f$ is separable if and only if $f$ is not a
polynomial in $T^p$.

\noindent An algebraic extension $K\subset L$ is \emph{separable} if
the minimal polynomial over $K$ of every element in $L$ is
separable. Algebraic field extensions in characteristic $0$ are always
separable. In positive characteristic $p$, the finite extension is
separable if and only if the fields $K$ and $L^p$ are linearly
disjoint over $K^p$. This explains the following definition:

\begin{definition}\label{D:sep_ext}
  An arbitrary (possibly not algebraic) field extension $F\subset K$
  is \emph{separable} if, either the characteristic is $0$ or, in case
  the characteristic is $p>0$, the fields $F$ and $K^p$ are linearly
  disjoint over $F^p$.
\end{definition}

\noindent A field $K$ is \emph{perfect} if either it has
characteristic $0$ or if $K=K^p$, for $p=\car(K)$. Any field
extension of a perfect field is separable. Given a field $K$, we
define its \emph{imperfection degree} (in $\N\cup\{\infty\})$, as $0$
if the characteristic of $K$ is $0$, or $\infty$, in case of positive
characteristic $p$ if $[K:K^p]$ is infinite. Otherwise $[K:K^p]=p^e$
for $e$ the degree of imperfection. Thus, a field is perfect if and
only if its imperfection degree is $0$

Another example of fields equipped with a definable
subfields are differential fields. A differential field consists of
a field $K$ together with a distinguished additive morphism $\delta$
satisfying Leibniz' rule

\[ \delta(xy)=x\delta(y)+y\delta(x).\]

\noindent Analogously to Zariski-closed sets for pure field, one
defines
\emph{Kolchin-closed} sets in differential fields as zero sets of
systems of differential-polynomials equations, that is, polynomial
equations on the different iterates of the variables under the
derivation. For a tuple $x=(x_1,\ldots,x_n)$ in $K$,  denote by
$\delta(x)$ the tuple $(\delta(x_1),\ldots,\delta(x_n))$.

\begin{lemma}\label{L:eq_allchar_delta}
  In any differential field $(K,\delta)$, an algebraic differential
  equation \[p(x,\delta x,\delta^2x\ldots;y,\delta
  y,\delta^2y,\ldots)\doteq 0\] is an equation in the model-theoretic
  sense.
\end{lemma}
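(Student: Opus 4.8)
The plan is to reduce the statement to the model-theoretic equation criterion already available, namely Example~\ref{E:hilbert}, by interpreting a differential field inside a pure field. The key observation is that a differential polynomial $p(x,\delta x,\delta^2 x,\ldots;y,\delta y,\delta^2 y,\ldots)$ involves only finitely many iterates of the derivation, say up to order $N$ in $x$ and $y$. So I would introduce new variables $x^{(0)},\ldots,x^{(N)}$ standing for $x,\delta x,\ldots,\delta^N x$ and similarly $y^{(0)},\ldots,y^{(N)}$, and regard $p$ as an ordinary polynomial $\tilde p$ with integer coefficients in these finitely many variables.

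First I would invoke Example~\ref{E:hilbert} applied to the partition in which the block $(x^{(0)},\ldots,x^{(N)})$ plays the role of the ``$x$'' variables and $(y^{(0)},\ldots,y^{(N)})$ the role of the ``$y$'' variables: the formula $\tilde p(x^{(0)},\ldots,x^{(N)};y^{(0)},\ldots,y^{(N)})\doteq 0$ is then an equation in the pure-field sense, because the family of finite intersections of its instances has the DCC by Hilbert's Basis Theorem. Next I would use the closure properties of equations recorded at the start of Section~2: an equation remains an equation after substituting a $\emptyset$-definable map into the object variables. Here the maps $x\mapsto\delta^k x$ are $\emptyset$-definable in the language of differential fields (each $\delta^k$ is a composition of the definable operation $\delta$), so composing $\tilde p\doteq 0$ with the definable map $x\mapsto(x,\delta x,\ldots,\delta^N x)$ recovers exactly the original differential equation $p\doteq 0$.

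The only mild subtlety is that the substitution property as stated applies to the first block of variables, while here I am substituting definable maps into \emph{both} the $x$- and the $y$-block. This is handled by the symmetry noted in the same paragraph, namely that if $\phi(x;y)$ is an equation then so is $\phi\inv(y;x)$: I would substitute the definable jet map into the $x$-block, use symmetry to swap the roles of the blocks, substitute into the (now-first) $y$-block, and swap back. Each step preserves the equation property, so after both substitutions the differential equation $p(x,\delta x,\ldots;y,\delta y,\ldots)\doteq 0$ is an equation in the model-theoretic sense.

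I do not expect a genuine obstacle here; the content of the lemma is entirely that finitely many derivatives appear and that the derivation is a definable operation, so the pure-field noetherianity transfers directly. The one thing to state carefully is that the DCC must hold uniformly across all completions of the theory of differential fields, but since the argument only uses Hilbert's Basis Theorem over the underlying ambient algebraically closed field $\mathbb U$ together with definability of $\delta$, the conclusion is insensitive to the completion, matching the convention for equations in incomplete theories fixed earlier in the excerpt.
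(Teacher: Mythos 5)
Your argument is correct, but it takes a somewhat different route from the paper's. You prolong to the jet variables $x,\delta x,\dotsc,\delta^N x$ and $y,\delta y,\dotsc,\delta^N y$, invoke the Hilbert Basis Theorem form of Example \ref{E:hilbert} for the resulting ordinary polynomial $\tilde p$, and then pull back along the definable jet maps, using the symmetry $\phi\mapsto\phi\inv$ to substitute into the parameter block as well. The paper instead follows the second, ``simpler'' proof inside Example \ref{E:hilbert}: it writes $p$ as $q(M_1,\dotsc,M_m;y,\delta y,\dotsc)$ with $q$ linear in the differential monomials $M_j$ of $x$, so that each instance cuts out an affine subspace of $K^m$ and the DCC is pure linear algebra; composing with the definable map $x\mapsto(M_1(x),\dotsc,M_m(x))$ then gives the lemma with no appeal to noetherianity of polynomial rings (and in characteristic zero the paper also notes the Ritt--Raudenbush alternative). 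What your version buys is directness; what the paper's buys is that the only DCC input is the finiteness of descending chains of subspaces. Two small remarks: substituting a definable map into the parameter variables never actually needs the symmetry detour, since the instances of $\phi(x;g(y))$ form a subfamily of the instances of $\phi(x;y)$, and a subfamily inherits the DCC on finite intersections; and the monster model of a completion of the theory of differential fields need not be algebraically closed, but this is harmless because Hilbert's Basis Theorem holds over an arbitrary field.
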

\begin{proof}
  In characteristic zero this follows from Ritt-Raudenbush's
  Theorem, which states that the Kolchin topology is noetherian. In
  arbitrary characteristic, it suffices to observe, as in Example
  \ref{E:hilbert}, that $p(x,\delta x,\ldots;y,\delta y,\ldots)$ can
  be written as $q(M_1,\ldots;y, \delta y,\ldots)$ where
  $q(u_1,\dotsc;y_0,y_1,\ldots)$ is linear in the $u_i$'s, for some
  differential monomials $M_j$'s in $x$.
\end{proof}

In particular, the theory $\mathsf{DCF}_0$ of differentially closed
fields of
characteristic $0$ is equational, since it has quantifier elimination
\cite{cW98}.

\noindent In a differential field $(K,\delta)$, the set of
\emph{constants}
\[ \C_K=\{x\in K\mid\delta(x)=0\}\] is a definable subfield, which
contains $K^p$ if $p=\car(K)>0$. If $K$ is algebraically
closed, then so is $\C_K$.

\begin{fact}\label{F:Wronski}
The elements $a_1,\ldots, a_k$ of the differential field $(K,\delta)$
are linearly dependent over $\C_K=\{x\in K\mid\delta(x)=0\}$ if and
only if their \emph{Wronskian} $\W(a_1,\ldots, a_k)$ is $0$, where

\[
\W(a_1,\ldots, a_k)=\det\begin{pmatrix}
 a_1 & a_2 & \ldots & a_k \\
 \delta(a_1) &  \delta(a_2) & \ldots &  \delta(a_k) \\
 \vdots & & \vdots & \\
  \delta^{k-1}(a_1) &  \delta^{k-1}(a_2) & \ldots &
\delta^{k-1}(a_k)
\end{pmatrix}.\]
\end{fact}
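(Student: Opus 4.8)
The plan is to prove both implications separately, handling the forward direction directly and the reverse direction by induction on $k$.

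For the forward direction, suppose $\sum_{i=1}^k c_i a_i = 0$ with all $c_i$ in $\C_K$ and not all zero. Since $\delta(c_i)=0$, applying $\delta^j$ and using Leibniz' rule gives $\sum_{i=1}^k c_i\,\delta^j(a_i)=0$ for every $j\geq 0$. Hence the columns of the Wronskian matrix satisfy a nontrivial linear relation with coefficients $c_i$, so $\W(a_1,\ldots,a_k)=0$. This step is valid in any characteristic.

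For the reverse direction I would induct on $k$, the case $k=1$ being immediate since $\W(a_1)=a_1$. Assume the statement for $k-1$ and suppose $\W(a_1,\ldots,a_k)=0$. If $\W(a_1,\ldots,a_{k-1})=0$, the induction hypothesis yields a $\C_K$-linear dependence among $a_1,\ldots,a_{k-1}$, hence among all $k$ elements, and we are done. So assume $\W(a_1,\ldots,a_{k-1})\neq 0$. Writing $v_i=(a_i,\delta a_i,\ldots,\delta^{k-1}a_i)^{T}$ for the columns of the Wronskian matrix, the nonvanishing of the top-left $(k-1)\times(k-1)$ minor shows that $v_1,\ldots,v_{k-1}$ are $K$-linearly independent, while $\det=0$ forces
\[ v_k=\sum_{i=1}^{k-1}\lambda_i\, v_i \]
for unique coefficients $\lambda_i$ in $K$.

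The crux of the proof is showing these $\lambda_i$ are constants. Reading off coordinates gives $\delta^j(a_k)=\sum_{i=1}^{k-1}\lambda_i\,\delta^j(a_i)$ for $0\leq j\leq k-1$. Differentiating the identity for a fixed $j\leq k-2$ and subtracting the identity for $j+1$ cancels the terms $\lambda_i\,\delta^{j+1}(a_i)$ and leaves
\[ \sum_{i=1}^{k-1}\delta(\lambda_i)\,\delta^j(a_i)=0,\qquad 0\leq j\leq k-2. \]
This is a homogeneous linear system in the unknowns $\delta(\lambda_i)$ whose coefficient matrix is exactly the Wronskian matrix of $a_1,\ldots,a_{k-1}$, which is invertible by assumption; hence $\delta(\lambda_i)=0$, so each $\lambda_i$ lies in $\C_K$. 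The $j=0$ identity then reads $a_k=\sum_{i=1}^{k-1}\lambda_i a_i$, the desired $\C_K$-linear dependence. The only use of the characteristic is through Leibniz' rule when differentiating $\lambda_i\,\delta^j(a_i)$, which is valid everywhere, so the argument goes through in positive characteristic as well; the main point to get right is the bookkeeping identifying the smaller Wronskian as the coefficient matrix of the system that forces the $\lambda_i$ to be constant.
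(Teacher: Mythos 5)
Your proof is correct. The paper states this Wronskian criterion as a Fact and gives no proof of its own, so there is no argument to compare against; what you have written is the standard classical proof (forward direction by applying $\delta^j$ to a relation with constant coefficients, converse by induction, using the invertibility of the smaller Wronskian matrix to force $\delta(\lambda_i)=0$), and your observation that it is characteristic-free is accurate and is exactly what the paper needs, since it invokes the fact for differential fields of positive characteristic.
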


\noindent Whether the above matrix has determinant $0$ does not
dependon the differential field where we compute it. In particular,
every differential subfield $L$ of $K$ is $\C_K$-special.

Perfect fields of positive characteristic cannot have non-trivial
derivations. In characteristic zero though, any field $K$ which is
notalgebraic over the prime field has a non-trivial derivation
$\delta$. Analogously to perfectness, we say that the differential
field $(K,\delta)$ is \emph{differentially perfect} if either $K$ has
characteristic $0$ or, in case $p=\car(K)>0$, if every constant has a
$p^{th}$-root, that is, if $\C_K=K^p$.

\noindent Notice that the following well-known result generalises the
equivalent situation for perfect fields and separable extensions.

\begin{remark}\label{R:diffperf_sep}
  Let $(K,\delta)$ be a differential field and $F$ a differentially
  perfect differential subfield of $K$. The extension $F\subset K$ is
  separable.
\end{remark}

\begin{proof}
  We need only prove it when the characteristic of $K$ is $p>0$. By
Fact \ref{F:Wronski}, the fields $F$ and $\C_K$ are linearly disjoint
over
  $\C_F=F^p$. Since $K^p\subset\C_K$, this implies that $F$ and $K^p$
  are linearly disjoint over $F^p$.
\end{proof}

In section \ref{S:MTPairs}, we will consider a third theory of fields
equipped with a definable subfield: belles paires of algebraically
closed fields. In order to show that the corresponding theory is
equational, we require some basic notions from linear algebra (\cf
\cite[R\'esultats d'Alg\`ebre]{jD74}). Fix some subfield $E$ of
$\mathbb U$.

Let $V$ be a vector subspace of $E^n$ with basis
$\{v_1,\ldots,v_k\}$. Observe that

\[V=\bigl\{v\in E^n\,\bigm|\, v\wedge (v_1\wedge\cdots\wedge v_k)=0
\text{ in
} \extp^{k+1} E^n\bigr\}.\]

\noindent The vector $v_1\wedge\cdots\wedge v_k$ depends only on $V$,
up to scalar multiplication, and determines $V$ completely.  \emph{The
  Pl\"ucker coordinates} $\pk(V)$ of $V$ are the homogeneous
coordinates of $v_1\wedge\cdots\wedge v_k$ with respect to the
canonical basis of $\extp^{k} E^n$. The
\emph{$k^\text{th}$-Grassmannian} $\Gr_k(E^n)$ of $E^n$ is the
collection of Pl\"ucker coordinates of all $k$-dimensional subspaces
of $E^n$.  Clearly $\Gr_k(E^n)$ is contained in $\Proj^{r-1}(E)$, for
$r={n \choose k}$.

\noindent The $k^\text{th}$-Grassmannian is Zariski-closed. Indeed,
given an
element  $\zeta$ of $\extp^k E^n$,  there is a smallest vector
subspace $V_\zeta$ of $E^n$ such that $\zeta$ belongs to $\extp^k
V_\zeta$.
The vector space $V_\zeta$ is the collection of inner products $e
\pluck
\zeta$, for $e$ in $\extp^{k-1} (E^n)^*$. Recall that  the
inner product $\pluck$ is a bilinear map
\[ \pluck: \extp^{k-1} (E^n)^* \times \extp^{k} (E^n) \to E. \]
\noindent A non-trivial element $\zeta$ of $\extp^k E^n$
determines a  $k$-dimensional subspace of $E^n$ if and only if
\[\zeta \wedge (e\pluck \zeta)=0,\]
\noindent for every $e$ in $\extp^{k-1} (E^n)^*$. Letting $e$ run over
a fixed basis of $\extp^{k-1} (E^n)^*$, we see that the
$k^\text{th}$-Grassmannian is the zero-set of a finite collection of
homogeneous polynomials.

Let us conclude this section with an observation regarding projections
of
certain varieties.

\begin{remark}\label{R:proj_complete}
  Though the theory of algebraically closed fields has elimination of
  quantifiers, the projection of a Zariski-closed set need not be
  again closed. For example, the closed set
  $$ V=\{(x,z) \in E \times E\mid x\cdot z=1\}$$ projects onto the
  open set $\{x \in E\,| \, x\neq 0\}$.  An algebraic variety $Z$ is
  \emph{complete} if, for all varieties $X$, the projection
$X \times Z\to X$ is a Zariski-closed map.  Projective varieties are  complete.
\end{remark}

\section{Model Theory of separably closed fields}\label{S:MTSCF}

Recall that a field $K$ is \emph{separably closed} if it has no proper
algebraic separable extension, or equivalently, if every non-constant
separable polynomial over $K$ has a root in $K$. For each fixed
degree, this can be expressed in the language of rings. Thus, the
class of separably closed fields is axiomatisable. Separably closed
fields of characteristic zero are algebraically closed. For a prime
$p$, let $\scfp$ denote the theory of separably closed fields of
characteristic $p$ and $\scfpe$ the theory of separably closed fields
of characteristic $p$ and imperfection degree $e$. Note that
$\mathsf{SCF}_{p,0}$ is the theory $\mathsf{ACF}_p$ of algebraically
closed fields of characteristic $p$.

\begin{fact}\label{F:Delon} (cf.~\cite[Proposition 27]{fD88})
  The theory $\scfpe$ is complete and stable, but not superstable for
  $e>0$. Given a model $K$ and a separable field extension
  $k\subset K$, the type of $k$ in $K$ is completely determined by its
  quantifier-free type. In particular, the theory has quantifier
  elimination in the language
  \[\LL_{\lambda}=\LL_{rings}\cup\{\lambda_n^i\mid 1\leq i\leq
    n<\omega \},\] where the value $\lambda_n^i(a_0,\ldots,a_n)$ is
  defined as follows in $K$. If there is a \emph{unique} sequence
  $\zeta_1,\dotsc,\zeta_n\in K$ with
  $a_0 = \zeta_1^p \, a_1+\dotsb+\zeta_n^p\,a_n$, we set
  $\lambda_n^i(a_0,\ldots,a_n)=\zeta_i$. Otherwise, we set
  $\lambda_n^i(a_0,\ldots,a_n)=0$ and call it \emph{undefined},
\end{fact}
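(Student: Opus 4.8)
The statement is due to Ershov and Delon, and the plan is to derive everything from quantifier elimination in $\LL_\lambda$, which is the one substantive step; completeness, the determination of types by quantifier-free types, stability, and the failure of superstability then all fall out.

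First I would pin down the $\LL_\lambda$-substructures of a model $K\models\scfp$. A subfield $A$ is closed under the functions $\lambda_n^i$ precisely when, for every $K^p$-linearly independent tuple $a_1,\dots,a_n$ in $A$ and every $a_0\in A$ lying in their $K^p$-span, the then-unique coefficients $\zeta_i$ with $a_0=\sum\zeta_i^p a_i$ already lie in $A$ (if the $\lambda$-value is undefined it is $0\in A$, so no constraint arises). Taking a maximal $K^p$-independent subtuple shows this is equivalent to the $K^p$-linear span and the $A^p$-linear span of any finite tuple from $A$ agreeing on $A$, that is, to $A\ld_{A^p}K^p$, which by Definition \ref{D:sep_ext} says that $A\subset K$ is separable. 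Thus the $\LL_\lambda$-substructures are exactly the separable subfields, and $\LL_\lambda$-embeddings are the field embeddings preserving all $p$-linear dependencies over $K^p$.

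Next I would prove quantifier elimination by the usual back-and-forth criterion: given $\aleph_1$-saturated $K_1,K_2\models\scfpe$ and an $\LL_\lambda$-isomorphism $f\colon A_1\to A_2$ between substructures, extend $f$ to any prescribed $c\in K_1$. If $c$ is separably algebraic over $A_1$, its minimal polynomial is separable, so its $f$-image acquires a root in the separably closed $K_2$; the extension $A_1(c)$ is again separable over $K_1^p$, hence $\LL_\lambda$-closed, and we match $c$ to such a root. If $c$ is transcendental, I would use saturation and the infinite transcendence degree of $K_2$ to realise an element $c'$ whose entire iterated $\lambda$-expansion mirrors that of $c$. This is where the real difficulty lies: each value $\lambda_\mu(c)$ is itself a new element whose own $\lambda$-expansion must be matched, so one is forced to build coherently an $\omega$-branching tree of new transcendentals and to check that it embeds consistently into $K_2$; verifying that the $\lambda$-closure of $A_1(c)$ is obtained by iterating the separably-algebraic case on top of the transcendental step is the main obstacle. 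The fixed imperfection degree $e$ enters precisely here: once $A_1$ contains $e$ elements that are $p$-independent in $K_1$ it contains a $p$-basis, which bounds how many genuinely new $\lambda$-coordinates can appear.

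From quantifier elimination the remaining claims follow. The assertion that the quantifier-free type of a separable extension $k\subset K$ determines its full type is a direct restatement of quantifier elimination together with the identification of substructures above. Completeness of $\scfpe$ follows because any two models contain $\LL_\lambda$-isomorphic substructures, namely the $\lambda$-closures of the subfields generated over $\Fp$ by a $p$-basis of size $e$, so quantifier elimination renders the models elementarily equivalent. For stability and its failure of superstability I would count types: by quantifier elimination $\tp(c/A)$ is determined by the isomorphism type of the separable extension $A\langle c\rangle/A$, equivalently by the iterated $\lambda$-expansion of $c$ over $A$, an $\omega$-indexed sequence of elements, whence $|S(A)|=|A|^{\aleph_0}$. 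This bound yields stability; for $e>0$ the exponent is genuinely $\aleph_0$, since $p$-independent elements exist and the expansion branches infinitely, and $|A|^{\aleph_0}>|A|$ whenever $|A|\geq 2^{\aleph_0}$ has cofinality $\omega$, so the theory is not superstable. For $e=0$ the expansion terminates, $|S(A)|=|A|$, and one recovers the $\omega$-stability of $\mathsf{ACF}_p$.
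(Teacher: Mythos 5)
The paper offers no proof of this statement: it is quoted as a Fact with a citation to Delon's \emph{Id\'eaux et types sur les corps s\'eparablement clos}, so your sketch can only be measured against the standard argument in the literature. Your outline follows that standard route -- identify the $\LL_\lambda$-substructures with the separable subfields, run a back-and-forth between saturated models to get quantifier elimination, then read off completeness, the determination of types by quantifier-free types, and the type-counting bounds. The identification of substructures with separable subfields via $A\ld_{A^p}K^p$ is correct, and the deductions of completeness, stability and non-superstability from quantifier elimination are essentially right (modulo the point that for non-superstability you must actually exhibit, not merely bound, $|A|^{\aleph_0}$ many types, which your branching remark only gestures at, and that your remark about $A_1$ acquiring a $p$-basis only makes sense for finite $e$).

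The genuine gap is exactly where you say ``this is where the real difficulty lies'': the transcendental step of the back-and-forth. Invoking saturation to ``realise an element $c'$ whose entire iterated $\lambda$-expansion mirrors that of $c$'' presupposes that every finite fragment of the quantifier-free $\LL_\lambda$-diagram of $\scl{A_1}{c}$ over $A_1$ is realisable in $K_2$, and that finite realisability is precisely the content you have not supplied. The missing algebraic input is that a separably closed field is existentially closed in every separable field extension: one takes a finitely generated subextension of $\scl{A_1}{c}/A_1$, chooses a separating transcendence basis so that the subextension is generated by a single separably algebraic element over a purely transcendental one, and specialises generically into $K_2$, using separable closedness to find a root of the specialised (still separable) minimal polynomial while preserving the finitely many polynomial equalities, inequalities and $p$-independence conditions in the fragment. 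A second point you pass over is that $p$-independence must itself be recoverable from the quantifier-free $\LL_\lambda$-type -- the convention that an undefined $\lambda$-value equals $0$ conflates ``undefined'' with ``defined with value $0$,'' so expressing $\pdep_n$ quantifier-freely in $\LL_\lambda$ requires a separate (standard but nontrivial) argument; without it the back-and-forth cannot even guarantee that the image of $c$ has the right $p$-degree over $A_2$. As written, the proposal names the obstacle rather than overcoming it, so it is an outline of Delon's proof rather than a proof.
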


Note that $\lambda_n^i(a_0,\ldots,a_n)$ is defined if and only if
\[K\models \neg \pdep_n(a_1,\ldots,a_n) \land
\pdep_{n+1}(a_0,a_1,\ldots,a_n),\] where $\pdep_n(a_1,\ldots,a_n)$
means that $a_1,\dotsc,a_n$ are $K^p$-linearly dependent. In
particular, the value
$\lambda_n^i(a_0,\ldots,a_n)$ is undefined for $n>p^e$.

For a subfield $k$ of a model $K$ of $\scfp$, the field extension
$k\subset K$ is separable if and only if $k$ is closed under
$\lambda$-functions.

\begin{notation}
  For elements $a_0,\ldots,a_n$ of $K$, the notation
  $\ldef{a_0, a_1,\ldots,a_n}$ is an abbreviation for
  $\neg \pdep_n(a_1,\ldots,a_n) \land \pdep_{n+1}(a_0,a_1,\ldots,a_n)$.
\end{notation}

\begin{remark}\label{R:fte_e_EI}
  If the imperfection degree $e$ is finite, we can fix a $p$-basis
  $\mathbf{b}=(b_1,\dotsc,b_e)$ of $K$, that is, a tuple such that the
  collection of monomials
  $$\mathbf{\bar b}=(b_1^{\nu_1}\dotsb b_e^{\nu_e}\,\mid\, 0\leq
  \nu_1,\dotsc,\nu_e<p)$$
  \noindent is a linear basis of $K$ over $K^p$. All $p$-bases have
  the same type. If we replace the $\lambda$-functions by the
  functions $\Lambda^\nu(a)=\lambda_{p^e}^{\nu}(a,\mathbf{\bar b})$,
  then the theory $\scfpe(\mathbf{b})$, in the language of
  rings with constants for $\mathbf{b}$ and equipped with the
  functions $\Lambda^\nu(x)$, has again quantifier
  elimination. Furthermore, the $\Lambda$-values of a sum or a product
  can be easily computed in terms of the values of each factor. In
  particular, the canonical base of the type $(a/K)$ in
  $\scfpe(\mathbf{b})$ is the field of definition of the
  vanishing ideal of the infinite tuple
  \[ (a,\overline{\Lambda}(a),
  \overline{\Lambda}(\overline{\Lambda}(a)),\ldots).\]
\noindent Thus, the theory $\scfpe(\mathbf{b})$ has
elimination of imaginaries.

\noindent As in Lemma \ref{L:eq_allchar_delta}, it follows that the
formula $t(x;y)\doteq 0$ is a model-theoretic equation, for every
$\LL_{\Lambda}$-term $t(x,y)$. This implies that
$\scfpe(\mathbf{b})$, and therefore $\scfpe$, is equational.

 Whether there is an explicit expansion of the language of rings in which
  $\scfpi$ has elimination of imaginaries is not yet
  known.
\end{remark}

From now on, work inside a sufficiently saturated model $K$ of the
incomplete theory $\scfp$. The imperfection degree of $K$ may be
either finite or infinite.

\noindent Since an $\LL_\lambda$-substructure determines a separable field
extension, Lemma \ref{L:coh} implies the following result:

\begin{cor}\label{C:coh_SCFinfty}
  Consider two subfields $A$ and $B$ of $K$ containing an elementary
  substructure $M$ of $K$. Whenever \[A\ind^{\scfp}_M B,\]
  \noindent the fields $K^p\cdot A$ and $K^p\cdot B$
  are linearly disjoint over $K^p\cdot M$.
\end{cor}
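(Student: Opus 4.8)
The plan is to read the statement off from Lemma \ref{L:coh}, applied with the definable subfield $\Pred(K)=K^p$. Since $K$ is a model of a completion of $\scfp$, which is stable by Fact \ref{F:Delon}, and since $K^p$ is a definable subfield, the lemma is available as soon as its hypotheses are met: we already have an elementary substructure $M$ contained in both $A$ and $B$, and the independence $A\ind_M B$ is assumed. With $\Pred(K)=K^p$ the conclusion $\Pred(K)\cdot A\ld_{\Pred(K)\cdot M}\Pred(K)\cdot B$ of that lemma is exactly the asserted linear disjointness of $K^p\cdot A$ and $K^p\cdot B$ over $K^p\cdot M$.

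The only hypothesis of Lemma \ref{L:coh} needing an argument is that $A$ be $\Pred$-special, i.e.\ that $A\ld_{K^p\cap A}K^p$ (Definition \ref{D:spec}). Here I would exploit the interplay between separability and specialness. If $A\subseteq K$ is a separable extension, then $A\ld_{A^p}K^p$ by Definition \ref{D:sep_ext}; since $A^p\subseteq K^p\cap A\subseteq K^p$, transitivity of linear disjointness with intermediate field $K^p\cap A$ (and using $(K^p\cap A)\cdot A=A$) yields $A\ld_{K^p\cap A}K^p$, so $A$ is $K^p$-special. As a subfield of a model of $\scfp$ determines a separable extension precisely when it is closed under the $\lambda$-functions, every $\LL_\lambda$-substructure is separable, hence $K^p$-special; this is the content of the remark preceding the statement.

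Because $A$ is only assumed to be a subfield, I would first replace it by its $\lambda$-closure $A^\lambda$, the smallest $\LL_\lambda$-substructure of $K$ containing $A$, noting $A^\lambda\supseteq A\supseteq M$. Since the $\lambda$-functions are $\emptyset$-definable, $A^\lambda\subseteq\operatorname{dcl}(A)$, so passing to $A^\lambda$ does not affect forking and $A\ind_M B$ still gives $A^\lambda\ind_M B$. Now $A^\lambda$ is $\LL_\lambda$-closed, hence $K^p$-special, so Lemma \ref{L:coh} applies to the pair $(A^\lambda,B)$ and gives that $K^p\cdot A^\lambda$ and $K^p\cdot B$ are linearly disjoint over $K^p\cdot M$. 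As $K^p\cdot A\subseteq K^p\cdot A^\lambda$ and linear disjointness is inherited by subfields, the desired conclusion for $A$ follows. The crux is precisely this specialness step: a general subfield of $K$ need \emph{not} be $K^p$-special, since the $K^p$-coordinates of its elements — the values extracted by the $\lambda$-functions — can lie outside it, so the reduction to the $\lambda$-closure together with the transitivity computation turning separability into specialness is the essential point; everything else is the formal apparatus of Lemma \ref{L:coh}.
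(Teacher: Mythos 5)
Your proposal is correct and follows essentially the same route as the paper: replace $A$ by the $\LL_\lambda$-substructure it generates, observe that this is an independent, $K^p$-special extension (via separability of $\lambda$-closed subfields), apply Lemma \ref{L:coh}, and pass back down to $A$. The paper additionally notes explicitly why the generated $\LL_\lambda$-structure is a \emph{field} (namely $a\inv=\lambda_1^1(1,a^p)$), a small point your write-up takes for granted, while you in turn spell out the separability-implies-specialness step that the paper leaves implicit.
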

\noindent Note that the field $K^p\cdot A$ is actually the ring
generated by $K^p$ and $A$, since $A$ is algebraic over $K^p$.
\begin{proof}
  The $\LL_{\lambda}$-structure $A'$ generated by $A$ is a subfield,
  since $a\inv=\lambda_1^1(1,a^p)$ for $a\not=0$. Since
  $A'\ind^{\scfpe}_M B$, and $A'$ is $K^p$-special, we
  have that $K^P\cdot A'$ and $K^p\cdot B$ are linearly disjoint over
  $M$. Whence $K^P\cdot A$ and $K^P\cdot B$ are also linearly disjoint
  over $M$
\end{proof}

We will now exhibit our candidate formulae for the equationality of
$\scfp$, uniformly on the imperfection degree.

\begin{definition}\label{D:lambda_tame}
  The collection of $\lambda$-tame formulae is the smallest collection
  of formulae in the language $\LL_{\lambda}$, containing all
  polynomial equations and closed under conjunctions, such that, for
  any natural number $n$ and polynomials $q_0, \ldots, q_n$ in
  $\Z[x]$, given a $\lambda$-tame formula $\psi(x, z_1,\ldots, z_n)$,
  the formula
  \begin{multline*}
    \phi(x)=\pdep_n(q_1(x),\ldots, q_n(x))\;\;\lor\;\;\\
    \bigl(\,\ldef{q_0(x),\ldots, q_n(x)}\;\land\;\, \psi(x,
    \overline{\lambda}_n(q_0(x), \ldots, q_n(x)))\,\bigr)
  \end{multline*}
  is $\lambda$-tame.
\end{definition}
\noindent Note that the formula $\phi$ above is equivalent to
\[\pdep_n(q_1,\ldots, q_n)\;\lor\;
\bigl(\pdep_{n+1}(q_0,\ldots, q_n)\,\land\, \psi(x,
\overline{\lambda}_n(\overline{q}(x)))\bigr).\]

\noindent In particular, the formula $\pdep_n(q_1(x),\ldots, q_n(x))$ is a tame
$\lambda$-formula, since it is equivalent to

\[\pdep_n(q_1(x),\ldots, q_n(x))\;\;\lor\;\;\\
    \bigl(\,\ldef{0,q_1(x),\ldots, q_n(x)}\;\land\;\,0\doteq 1\bigr).\]

There is a natural degree associated to a $\lambda$-tame formula, in
terms of the amount of nested $\lambda$-tame formulae it contains,
whereas polynomial equations have degree $0$. The degree of a
conjunction is the maximum of the degrees of the corresponding
formulae.

The  next remark is easy to prove by induction on the degree of the
formula:

\begin{remark}\label{R:lambda_transf}
  Given a $\lambda$-tame formula $\phi$ in $m$ many free variables and
  polynomials $r_1(X), \ldots, r_m(X)$ in several variables with
  integer coefficients, the formula $\phi(r_1(x),\ldots,r_m(x))$ is
  equivalent in $\scfp$ to a $\lambda$-tame formula of the same
  degree.
\end{remark}

\begin{prop}\label{P:lambda_BK}
  Modulo $\scfp$, every formula is equivalent to a Boolean
  combination of $\lambda$-tame formulae.
\end{prop}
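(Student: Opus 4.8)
The plan is to combine the quantifier elimination of $\scfp$ in $\LL_{\lambda}$ (Fact \ref{F:Delon}) with an induction that strips off the $\lambda$-functions one layer at a time. Since, modulo $\scfp$, every formula is a Boolean combination of atomic $\LL_{\lambda}$-formulae, and since a Boolean combination of Boolean combinations of $\lambda$-tame formulae is again of that shape, it suffices to treat a single atomic formula, which I write as $t(x)\doteq 0$ for an $\LL_{\lambda}$-term $t$. I would then prove, by induction on the number $N$ of occurrences of $\lambda$-function symbols in $t$ (allowing $t$ to carry auxiliary variables), that $t\doteq 0$ is equivalent modulo $\scfp$ to a Boolean combination of $\lambda$-tame formulae. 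The base case $N=0$ is immediate: $t\doteq 0$ is then a polynomial equation, hence $\lambda$-tame.

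For the inductive step ($N\geq 1$), I would single out an innermost $\lambda$-application, i.e.\ a subterm $\lambda_m^j(r_0,\ldots,r_m)$ whose arguments $r_0,\ldots,r_m$ are polynomials; such a subterm exists. Replacing every application $\lambda_m^i(r_0,\ldots,r_m)$ with this argument tuple by a fresh variable $z_i$ produces a term $t'(x,z_1,\ldots,z_m)$ with strictly fewer than $N$ occurrences of $\lambda$, and $t=t'(x,\overline{\lambda}_m(r_0,\ldots,r_m))$. I would then case-split on whether $\ldef{r_0,\ldots,r_m}$ holds. If it fails, each $\lambda_m^i(r_0,\ldots,r_m)$ equals $0$, so $t\doteq 0$ reduces to $t'(x,0,\ldots,0)\doteq 0$, a term with fewer $\lambda$-symbols to which the induction hypothesis applies. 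If $\ldef{r_0,\ldots,r_m}$ holds, the induction hypothesis yields a Boolean combination $\Theta(x,z)$ of $\lambda$-tame formulae $\psi_i(x,z)$ equivalent to $t'(x,z)\doteq 0$, and I would feed each $\psi_i$ into the recursive clause of Definition \ref{D:lambda_tame}: the formula $\pdep_m(r_1,\ldots,r_m)\lor\bigl(\ldef{r_0,\ldots,r_m}\land\psi_i(x,\overline{\lambda}_m(r_0,\ldots,r_m))\bigr)$ is $\lambda$-tame.

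To assemble the pieces I would use that $\pdep_m(r_1,\ldots,r_m)$ and $\pdep_{m+1}(r_0,\ldots,r_m)$ are themselves $\lambda$-tame (as recorded after Definition \ref{D:lambda_tame}), so that $\ldef{r_0,\ldots,r_m}$ and its negation are Boolean combinations of $\lambda$-tame formulae. Conjoining the displayed $\lambda$-tame formula with $\neg\pdep_m(r_1,\ldots,r_m)$ isolates exactly $\ldef{r_0,\ldots,r_m}\land\psi_i(x,\overline{\lambda}_m(r_0,\ldots,r_m))$. Writing $\Theta$ in disjunctive normal form and propagating the context $\ldef{r_0,\ldots,r_m}$ through it — using that $\ldef{r_0,\ldots,r_m}\land\neg G$ is a Boolean combination of $\lambda$-tame formulae whenever $G$ is — shows that $\ldef{r_0,\ldots,r_m}\land\Theta(x,\overline{\lambda}_m(r_0,\ldots,r_m))$ is a Boolean combination of $\lambda$-tame formulae. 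Disjoining the two cases then expresses $t\doteq 0$ as required, closing the induction.

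The main obstacle is conceptual rather than computational: one must recognise that the recursive clause of Definition \ref{D:lambda_tame} is designed precisely to record the substitution of the tuple $\overline{\lambda}_m(r_0,\ldots,r_m)$ of $\lambda$-values into a formula of lower $\lambda$-complexity, guarded by the definedness condition $\ldef{r_0,\ldots,r_m}$. The two genuine points to get right are that an undefined $\lambda$-value is literally $0$ (so the failure case is a substitution of constants, which does not raise the $\lambda$-count) and that the definedness context can be pushed through an arbitrary Boolean combination while staying inside Boolean combinations of $\lambda$-tame formulae; the latter is the only bookkeeping demanding care.
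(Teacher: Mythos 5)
Your proposal is correct and follows essentially the same route as the paper: reduce via the quantifier elimination of Fact \ref{F:Delon} to equations $t(x)\doteq 0$, induct on the number of $\lambda$-occurrences, peel off an innermost application $\overline{\lambda}_m(r_0,\dotsc,r_m)$ with polynomial arguments, split on $\ldef{r_0,\dotsc,r_m}$, and feed the $\lambda$-tame constituents of the Boolean combination obtained for $t'(x,\bar z)\doteq 0$ into the recursive clause of Definition \ref{D:lambda_tame}. The only difference is presentational: you spell out the bookkeeping of propagating the definedness guard through the Boolean combination, which the paper compresses into the observation that $\scfpe\models\bigl(\ldef{\overline{q}(x)}\rightarrow(\psi_i(x,\overline{\lambda}_n(\overline{q}(x)))\leftrightarrow\phi_i(x))\bigr)$.
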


\begin{proof}
  By Fact \ref{F:Delon}, it suffices to show that the equation
  $t(x)\doteq0$ is equivalent to a Boolean combination of
  $\lambda$-tame formulae, for every $\LL_{\lambda}$-term
  $t(x)$. Proceed by induction on the number of occurrences of
  $\lambda$-functions in $t$. If no $\lambda$-functions occur in
  $t$, the result follows, since polynomial equations are
  $\lambda$-tame. Otherwise
  \[t(x)=r(x,\overline\lambda_n(q_0(x),\dotsc,q_n(x)))\] for some
  $\LL_{\lambda}$-term $r(x,z_1,\dotsc,z_n)$ and polynomials $q_i$. By
  induction,  the term $r(x,\bar z)\doteq0$ is equivalent
  to a Boolean combination
  $BK(\psi_1(x,\bar z),\dotsc,\psi_m(x,\bar z))$ of $\lambda$-tame
  formulae $\psi_1(x,\bar z),\dotsc,\psi_m(x,\bar z)$. Consider now
  the $\lambda$-tame formulae
  \[\phi_i(x)=\pdep_n(q_1(x),\ldots, q_n(x))\;\lor\;
  \bigl(\,\ldef{\overline{q}(x)}\;\land\;\, \psi_i(x,
  \overline{\lambda}_n(\overline{q}(x)))\bigr).\] Note that
  \[ \scfpe\models \Bigl( (\ldef{\overline{q}(x)}) \longrightarrow (\psi_i(x,
  \overline{\lambda}_n(\overline{q}(x)))\,\leftrightarrow\,\phi_i(x)) \Bigr).\]

\noindent Therefore $t(x)\doteq0$ is equivalent to
  \[\bigl(\neg\ldef{\overline{q}(x)}\,\land\;r(x,0)\doteq0\bigr)\;\;\lor
  \;\;\bigl(\ldef{\overline{q}(x)}\,\land\;
  BK(\phi_1(x),\dotsc,\phi_m(x))\bigr),\] which is, by induction, a
  Boolean combination of $\lambda$-tame formulae.
\end{proof}

We conclude this section with a homogenisation result for
$\lambda$-tame formulae, which will be used in the proof of the
equationality of $\scfp$.

\begin{prop}\label{P:lambda_hom}
  For every $\lambda$-tame $\phi(x,y_1,\ldots, y_n)$ there is a
  $\lambda$-tame formula $\phi'(x,y_0,y_1,\dotsc,y_n)$ of same
  degree such that
  \[  \scfp\models\forall x,y_0 \dotsc y_n \Bigl(
  \phi'(x,y_0,\ldots, y_n)\longleftrightarrow \Bigl(
  \phi\Bigl(x,\frac{y_1}{y_0},\dotsc, \frac{y_n}{y_0}\Bigr) \lor
  y_0 \doteq 0 \Bigr) \Bigr) .\]
\end{prop}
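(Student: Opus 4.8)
The plan is to induct on the degree of the $\lambda$-tame formula $\phi$, following exactly the recursive structure of Definition \ref{D:lambda_tame}. The guiding idea is that clearing denominators turns $\phi(x, y_1/y_0, \ldots, y_n/y_0)$ into a homogeneous condition, and the disjunct $y_0 \doteq 0$ is precisely what one needs to absorb the degenerate case where the substitution is ill-defined. The base case is when $\phi$ is a polynomial equation $r(x, y_1, \ldots, y_n) \doteq 0$. Here I would set $\phi'$ to be the equation obtained by multiplying through by a suitable power $y_0^N$ of $y_0$, where $N$ bounds the degree of $r$ in the $y$-variables, so that $y_0^N \cdot r(x, y_1/y_0, \ldots, y_n/y_0)$ becomes a genuine polynomial in $x, y_0, \ldots, y_n$. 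When $y_0 \neq 0$ this vanishes iff the original equation holds; when $y_0 = 0$ the factor $y_0^N$ forces vanishing, which matches the disjunct $y_0 \doteq 0$. This $\phi'$ is again a polynomial equation, hence $\lambda$-tame of degree $0$.

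For the inductive step, suppose $\phi(x, y_1, \ldots, y_n)$ has the shape from Definition \ref{D:lambda_tame}, namely
\[
\phi = \pdep_k(q_1, \ldots, q_k) \;\lor\; \bigl( \ldef{q_0, \ldots, q_k} \land \psi(x, \overline{\lambda}_k(q_0, \ldots, q_k)) \bigr),
\]
where the $q_j = q_j(x, y_1, \ldots, y_n)$ are polynomials and $\psi$ is $\lambda$-tame of strictly smaller degree. By Remark \ref{R:lambda_transf}, substituting the rational expressions $y_i/y_0$ into each $q_j$ and clearing the common denominator produces polynomials $\tilde q_j(x, y_0, \ldots, y_n)$ with $q_j(x, y_1/y_0, \ldots, y_n/y_0) = \tilde q_j / y_0^{d}$ for a common power $d$. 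The key observation is that the predicates $\pdep$ and $\ldef{\cdot}$ are invariant under multiplying all their arguments by a common nonzero scalar, since $K^p$-linear dependence is unaffected by a simultaneous nonzero rescaling. Likewise, $\lambda_k^i$ is homogeneous of degree $0$ in this sense, so $\overline{\lambda}_k(\tilde q_0, \ldots, \tilde q_k) = \overline{\lambda}_k(q_0, \ldots, q_k)$ whenever $y_0 \neq 0$. I would therefore define $\phi'$ by taking the same outer structure applied to the $\tilde q_j$, with the inner $\psi$ replaced by its homogenisation $\psi'$ supplied by the induction hypothesis, and then adjoin the disjunct $y_0 \doteq 0$.

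The main obstacle is bookkeeping at the boundary $y_0 = 0$: I must verify that the formula I write down genuinely evaluates to true there, rather than to some uncontrolled value coming from the $\pdep$/$\ldef$ predicates applied to the $\tilde q_j$ with $y_0 = 0$ substituted. This is handled cleanly by simply putting $y_0 \doteq 0$ as an explicit disjunct of $\phi'$, exactly as in the statement; since $\lambda$-tame formulae are closed under conjunction, and a disjunction with $y_0 \doteq 0$ can be folded into the recursive template (using the trick from the remark after Definition \ref{D:lambda_tame} that $\pdep$ and degenerate clauses are themselves $\lambda$-tame), the resulting $\phi'$ remains $\lambda$-tame of the same degree as $\phi$. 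The remaining verification that $\phi'$ is equivalent to $\phi(x, y_1/y_0, \ldots, y_n/y_0) \lor y_0 \doteq 0$ then splits into the case $y_0 \neq 0$, where scale-invariance of all the ingredients gives equality of truth values clause by clause, and the case $y_0 = 0$, where both sides are true by construction.
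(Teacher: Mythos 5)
Your overall strategy is the paper's: induct on the degree, clear denominators in the $q_j$, and use the fact that $\pdep$, $\ldef{\cdot}$ and the $\lambda$-functions are invariant under multiplying all arguments by a common nonzero scalar. But there is a genuine gap at the boundary $y_0=0$, in both the base case and the inductive step. In the base case, if $N$ is the $y$-degree of $r$, then $y_0^N\cdot r(x,y_1/y_0,\dotsc,y_n/y_0)$ is the polynomial $\sum_\alpha c_\alpha(x)\,y^\alpha y_0^{N-|\alpha|}$, whose value at $y_0=0$ is the top-degree homogeneous part $\sum_{|\alpha|=N}c_\alpha(x)y^\alpha$ --- not $0$ in general. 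So your claim that ``the factor $y_0^N$ forces vanishing'' at $y_0=0$ fails; the $y_0^N$ is not a literal factor of the cleared polynomial. The same problem recurs for the $\tilde q_j$ in the inductive step: at $y_0=0$ they specialise to the leading forms of the $q_j$, which may well be $K^p$-linearly independent, so neither the $\pdep$ disjunct nor the $\ldef{\cdot}\land\psi'$ disjunct need hold there.

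Your proposed repair --- ``adjoin the disjunct $y_0\doteq 0$'' --- is not available: the class of $\lambda$-tame formulae is closed under conjunction and under the one specific disjunctive template of Definition \ref{D:lambda_tame}, not under disjunction with an arbitrary polynomial equation, and the trick from the remark after that definition only shows that $\pdep_n(q_1,\dotsc,q_n)$ itself is $\lambda$-tame; it does not let you disjoin $y_0\doteq 0$ onto an arbitrary $\lambda$-tame formula. The paper's device is to multiply the cleared polynomials by one \emph{extra} factor of $y_0$ (setting $q''_j=y_0\cdot q'_j$ where $q_j(x,y/y_0)=q'_j/y_0^N$, and likewise $y_0\cdot q'$ in the base case). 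Then every argument of $\pdep$ vanishes at $y_0=0$, the zero tuple is trivially $K^p$-linearly dependent, and so the $\pdep$ disjunct already present in the template absorbs the case $y_0=0$ with no extra disjunct needed; for $y_0\neq 0$ the common factor $y_0^{N+1}$ is harmless by the scale-invariance you correctly identified. With that single modification your argument goes through and coincides with the paper's proof.
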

\noindent We call $\phi'$ a \emph{homogenisation} of $\phi$ with
respect to $y_0,\ldots, y_n$.

\begin{proof}

  Let $y$ denote the tuple $(y_1,\dotsc,y_n)$. By induction on the
  degree, we need only consider basic $\lambda$-tame formulae, since
  the result is preserved by taking conjunctions. For degree $0$,
  suppose that $\phi(x,y)$ is the formula $q(x,y)\doteq 0$, for some
  polynomial $q$.  Write \[q(x,\frac{y}{y_0})=\frac{q'(x,y_0,y)}{y_0^N}.\] Then
  $\phi'(x,y_0,y)=y_0\cdot q'(x,y)\doteq0$ is a homogenisation.

\noindent  If $\phi(x,y)$ has the form
  \[ \pdep_n(q_1(x,y),\dotsc, q_m(x,y)) \lor
  \left( \ldef{q_0,\dotsc, q_m} \land\, \psi(x,y,
  \overline{\lambda}_n(q_0, \dotsc, q_m)) \right), \] let
  $\psi'(x,y_0,y,z)$ be a homogenisation of $\psi(x,y,z)$ with respect
  to $y_0,y$. There is a natural number $N$ such that for each $0\leq
  j\leq m$,
  \[q_j(x,\frac{y}{y_0})=\frac{q'_j(x,y_0,y)}{y_0^N}\]
  for polynomials $q'_j$. Set now $q''_j=y_0\cdot q'_j$ and
  \[\phi'(x,y_0,y) = \pdep_n(q''_1,\dotsc, q''_m) \lor \left(
  \ldef{q''_0,\ldots, q''_m} \land\, \psi'(x,y_0,y,
  \overline{\lambda}_n(q''_0, \dotsc, q''_m)) \right).\]
\end{proof}
\section{Equationality of $\scfp$}\label{S:EqSCFp_alt}

By Proposition \ref{P:lambda_BK}, in order to show that the theory
$\scfp$ is equational, we need only show that each $\lambda$-tame
formula is an equation in every completion $\scfpe$. As before, work
inside a sufficiently saturated model $K$ of some fixed imperfection
degree.

For the proof, we require \emph{generalised
  $\lambda$-functions}: If the vectors $\bar a_0,\dotsc,\bar a_n$ in
$K^N$ are linearly independent over $K^p$ and the system
\[\bar a_0=\sum\limits_{i=1}^n \zeta_i^p \, \bar a_i\]
has a solution, then it is unique and denoted by
$\lambda^i_{N,n}(\bar a_0,\dotsc,\bar a_n)$. The notation
$\ldef[_{N,n}]{\bar a_0,\dotsc,\bar a_n}$ means that all
$\lambda_{N,n}^i$'s are defined. Observe that
$\lambda^i_{1,n}=\lambda_n^i$.  We denote by
$\pdep_{N,n}(\bar a_0,\dotsc,\bar a_n)$ the formula stating that the
vectors $\bar a_1,\dotsc,\bar a_n$ are linearly dependent over $K^p$.

\begin{theorem}\label{T:eq_SCF}
  Given any partition of the variables, every $\lambda$-tame formula
  $\phi(x;y)$ is an equation in $\scfpe$
\end{theorem}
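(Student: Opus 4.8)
The plan is to verify the equationality criterion of Remark~\ref{R:Srour}: it suffices to show that, whenever $M\prec N$ are elementary substructures of $K$ with $a\ind_M N$, then $\tp(a/M)\proves\tp_\phi^+(a/N)$; that is, every positive instance $\phi(a,b)$ with $b$ in $N$ is already a consequence of $\tp(a/M)$. I would argue by induction on the degree of the $\lambda$-tame formula. The base case, where $\phi$ is a polynomial equation, is Example~\ref{E:hilbert}, and since $\tp(a/N)$ is the heir of $\tp(a/M)$ the positive polynomial instances over $N$ are plainly controlled by $\tp(a/M)$. Conjunctions are immediate, so the heart of the matter is the single building block of Definition~\ref{D:lambda_tame}.

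Fix such a block $\phi(x;y)=\pdep_n(q_1,\ldots,q_n)\lor(\ldef{q_0,\ldots,q_n}\land\psi(x,\overline\lambda_n(q_0,\ldots,q_n)))$, with $q_i=q_i(x,y)$ and $\psi$ of strictly smaller degree, and suppose $\models\phi(a,b)$ with $b$ in $N$. The key manoeuvre is to separate the object and parameter variables inside each $q_i$: writing $q_i(a,b)=\sum_\alpha a^\alpha\,Q_{i\alpha}(b)$, the monomials $a^\alpha$ lie in the field $A$ generated by $M$ and $a$, while the coefficients $Q_{i\alpha}(b)$ lie in $N$. Choosing among the $a^\alpha$ a basis $e_1,\ldots,e_r$ of their $K^p\!\cdot\! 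M$-span and substituting, I obtain $q_i(a,b)=\sum_{l}e_l\,w_{il}$ with $w_{il}$ in $K^p\!\cdot\! N$, where the change-of-basis coefficients are read off from the quantifier-free $\lambda$-type of $a$ over $M$. By Corollary~\ref{C:coh_SCFinfty} applied to $A$ and $N$, the elements $e_1,\ldots,e_r$ remain $K^p$-linearly independent over $N$, so $\pdep_n(q_1(a,b),\ldots,q_n(a,b))$ holds if and only if the coordinate vectors $\bar w_i=(w_{il})_l$ are $K^p$-linearly dependent, that is $\pdep_{r,n}(\bar w_1,\ldots,\bar w_n)$; likewise, when these vectors are $K^p$-independent, the values $\overline\lambda_n(q_0,\ldots,q_n)$ are computed by the generalised functions $\lambda_{r,n}$ applied to the $\bar w_i$. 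This is exactly what the generalised $\lambda$-functions were introduced for.

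At this point the truth value of the $\pdep$- and $\ldef$-parts, together with the resulting $\lambda$-values, depends on $a$ only through the $K^p\!\cdot\! M$-linear algebra of its monomials, hence through $\tp(a/M)$, combined with the fixed parameters $Q_{i\alpha}(b)$ in $N$; linear disjointness over $M$ guarantees that no new $K^p$-relations appear in passing from $M$ to $N$, which is precisely why $\tp(a/M)$ suffices. I would then feed the resulting $\lambda$-values, which now solve a linear system with data over $N$, back as a new tuple $c$ of parameters from $N$, so that $\psi(x,\overline\lambda_n(q_0,\ldots,q_n))$ becomes an instance $\psi(a,c)$ with $c$ in $N$; since $\psi$ has smaller degree and $a\ind_M N$ still holds, the induction hypothesis applies. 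The homogenisation of Proposition~\ref{P:lambda_hom} is used to clear the denominators produced when separating variables and dehomogenising the $\lambda$-values, so that they can indeed be taken within $N$.

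The main obstacle, and the step I would treat most carefully, is exactly this last reduction: showing that the generalised $\lambda$-values $\overline\lambda_n(q_0(a,b),\ldots,q_n(a,b))$ are realised by a tuple in $N$ (equivalently, that $\psi$ becomes a genuine instance over $N$), and that all the coordinate bookkeeping---the choice of the $e_l$, the coefficients $w_{il}$, and the passage through $\lambda_{r,n}$---depends on $a$ solely through $\tp(a/M)$. Corollary~\ref{C:coh_SCFinfty} is the engine that makes this work, since it transfers the linear algebra over $M$ faithfully to $N$; the combinatorial care needed to keep the generalised $\lambda$-functions and the homogenisation consistent across the induction is where the real work lies.
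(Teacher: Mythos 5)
Your overall skeleton --- induction on the degree, extraction of a $K^p\!\cdot\! M$-basis of the monomials in $a$, Corollary~\ref{C:coh_SCFinfty} to keep that basis independent over $N$, generalised $\lambda$-functions, homogenisation --- matches the paper's strategy, and using Remark~\ref{R:Srour} instead of the indiscernibly-closed criterion is a legitimate variant (it is what the paper does for $\dcfp$). But the step you yourself flag as delicate is where the argument breaks as written. The $\lambda$-values $\overline{\lambda}_n(q_0(a,b),\dotsc,q_n(a,b))$ do \emph{not} lie in $N$, and the linear system they solve does not have data over $N$: in your decomposition $q_i(a,b)=\sum_l e_l\,w_{il}$ the coefficients $w_{il}$ lie only in $K^p\cdot N$, since the change-of-basis scalars $c_{\alpha l}\in K^p\cdot M$ contribute $p$-th powers of elements of $K$ that are neither in $N$ nor independent of $a$. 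Consequently $\psi(x,\overline{\lambda}_n(\dotsb))$ does not become an instance $\psi(a,c)$ with $c$ in $N$, and the induction hypothesis in the form $\tp(a/M)\proves\tp^+_\psi(a/N)$ cannot be invoked: you would need $c$ in $N$, or at least $a\ind_M Nc$, whereas the tuple of $\lambda$-values is definable from $a$ over $Nb$.

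The paper's way around this is the $p$-th power trick. One introduces an auxiliary tuple $a'$ with $q_k(a,y)=\sum_j q_{j,k}(a'^p,m,y)\,a_j$, so that in the resulting formula $\phi'(x,x';y',y)$ the object variables enter the arguments of the generalised $\lambda$-functions only as $p$-th powers. Then (this is the Claim in the paper's proof) Cramer's rule turns the generalised $\lambda$-values into \emph{rational functions} of $(x,x')$ and the parameters, so that substituting them into $\psi$ and homogenising yields a genuine $\lambda$-tame formula of degree $D-1$ in the object variables $(x,x')$ with parameters from $N$ --- no $\lambda$-value is ever treated as a parameter. Your proposal is missing both this conversion and the final transfer from the enlarged tuple $(a,a')$ back to $a$ (done in the paper via the Morley-sequence argument applied to $\phi'$, and in the analogous $\dcfp$ proof by realising $\tp(aa'/M)$ by some $\tilde a\tilde a'$). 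Without these two steps the degree does not actually drop on a formula to which the induction hypothesis applies.
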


\begin{proof}
  We proceed by induction on the degree $D$ of the $\lambda$-tame
  formula. For $D=0$, it is clear. So assume that the theorem is true
  for all $\lambda$-tame formulae of degree smaller than some fixed
  degree $D \geq 1$. Let $\phi(x;y)$ be a $\lambda$-tame
  formula of degree $D$.
  \begin{claimstern} If
    \begin{multline*}
    \phi(x;y)=\pdep_{N,n}(\bar q_1(x^p,y),\ldots,
    \bar q_n(x^p,y))\;\;\lor\;\;\\ \bigl(\,\ldef[_N]{\bar q_0(x^p,y),\ldots,
      \bar q_n(x^p,y)}\;\land\;\, \psi(x,y,
    \overline{\lambda}_{N,n}(\bar q_0(x^p,y),
    \ldots,\bar q_n(x^p,y)))\,\bigr),
  \end{multline*}
    where $\psi(x, y, z_1,\ldots,z_n)$ is a $\lambda$-tame formula of
    degree $D-1$, then $\phi(x;y)$ is an equation.
  \end{claimstern}
  \begin{claimsternproof} It suffices to show
    that every instance $\phi(x,b)$ is equivalent to a formula
    $\psi'(x,b',b)$, where $\psi'(x,y',y)$ is a $\lambda$-tame formula
    of degree $D-1$, for some tuple $b'$.

    \noindent Choose a $K^p$-basis $b_1,\dotsc,b_{N'}$ of all
    monomials in $b$ occurring in the $\bar q_k(x^p,b)$'s and write
    $\bar q_k(x^p,b)=\sum_{j=1}^{N'} \bar q_{j,k}(x,b')^p\cdot
    b_j$. We use the notation $\mathbf{q}_k(x,b')$ for the vector of
    length $NN'$ which consists of the concatenation of the vectors
    $\bar q_{j,k}(x,b')$. Let $\mathbf{Q}(x,b')$ be the
    $(NN'\times n)$-matrix with columns
    $\mathbf{q}_1(x,b'),\dotsc,\mathbf{q}_n(x,b')$. The vectors
    $\bar q_1(x^p,b),\ldots, \bar q_n(x^p,b)$ are linearly dependent
    over $K^p$ if and only if the columns of $\mathbf{Q}(x,b')$
    are linearly dependent over $K$. Let $J$ range over all $n$-element
    subsets of $\{1,\dotsc,NN'\}$ and let $\mathbf{Q}^J(x,b')$ be the
    corresponding $n\times n$-submatrices. Thus
\[ \scfpe \models \Bigl( \pdep_{N,n}(\bar q_1(x^p,y),\ldots,
    \bar q_n(x^p,y)) \longleftrightarrow
    \Land_J\det(\mathbf{Q}^J(x,b'))\doteq 0 \Bigr).\]

\noindent If $\det(\mathbf{Q}^J(x,b'))$ is not zero, the vector
    $\overline\zeta=\overline{\lambda}_{N,n}(\bar q_0(x^p,b), \ldots,
    \bar q_n(x^p,b))$ is defined if and only if $\mathbf
    q_0(x,b')=\mathbf{Q}(x,b')\cdot\overline\zeta$. In that case,
    \[\overline\zeta=\det(\mathbf{Q}^J(x,b'))\inv\cdot B^J(x,b')\cdot
    \mathbf q_0^J(x,b'),\] where $B^J(x,b')$ is the adjoint of
    $\mathbf{Q}^J(x,b')$. Set
    $d^J(x,b')=\det(\mathbf{Q}^J(x,b'))$ and $r^J(x,b')= B^J(x,b')\cdot
    \mathbf q_0^J(x,b')$, so
    \[\overline\zeta=d^J(x,b')\inv\cdot r^J(x,b').\]

    \noindent Consider the $\lambda$-tame formula
    \[\psi^J(x,b',b,\overline z)\;\;=\;\;
    \bigl(\mathbf q_0(x,b')\doteq \mathbf{Q}(x,b')\cdot\overline
    z\;\land\; \psi(x,b,\overline z)\bigr),\]
\noindent  of degree $D-1$.  It follows that
    $\phi(x,b)$ is equivalent to
    \[\Land_J\;\; \bigl(d^J(x,b')\doteq 0\;\;\lor
    \;\;\psi^J(x,b',b,d^J(x,b')\inv\cdot r^J(x,b'))\bigr),\] which is
    equivalent to a $\lambda$-tame formula of degree $D-1$, by Remark
    \ref{R:lambda_transf} and Proposition \ref{P:lambda_hom}.
  \end{claimsternproof}

  For the proof of the theorem, since a conjunction of equations is
  again an equation, we may assume that
  \begin{multline*}
    \phi(x;y)=\pdep_n(q_1(x,y),\ldots,
    q_n(x,y))\;\;\lor\;\;\\ \bigl(\,\ldef{q_0(x,y),\ldots,
      q_n(x,y)}\;\land\;\, \psi(x,y, \overline{\lambda}_n(q_0(x,y),
    \ldots, q_n(x,y)))\,\bigr)
  \end{multline*}
  for some $\lambda$-tame formula $\psi(x, y, z_1,\ldots,z_n)$ of
  degree $D-1$. It suffices to show that $\varphi(a,y)$ is \icl. By
  Lemma \ref{L:icl_model}, consider an elementary substructure $M$ of
  $K$ and a Morley sequence $(b_i)_{i\leq \omega}$ over $M$ such that
  \[ a\ind_M b_i \text{ with } \models \phi(a,b_i) \text{ for } i
  <\omega.\] We must show that $K\models\phi(a,b_\omega)$.

\noindent  Choose a $(K^p\cdot M)$-basis $a_1,\dotsc,a_N$ of the monomials in
  $a$ which occur in the $q_k(a,y)$ and write $q_k(a,y)=\sum_{j=1}^N
  q_{j,k}(a'^p,m,y)\cdot a_j$, for some tuple $m$ in
  $M$ and $a'$ in $K$. Let $\bar q_k(a'^p,m,y)$ be the
  vector $\bigl(q_{j,k}(a'^p,m,y)\bigr)_{1\leq j\leq N}$ and consider
  the formula
  \begin{multline*}
    \phi'(x,x';y',y)=\pdep_{N,n}(\bar q_1(x'^p,y',y),\ldots, \bar
    q_n(x'^p,y',y))\;\;\lor\;\;\\ \bigl(\,\ldef[_N]{\bar
      q_0(x'^p,y',y),\ldots, \bar q_n(x'^p,y',y)}\;\land\;\\
    \psi(x,y,
    \overline{\lambda}_{N,n}(\bar q_0(x'^p,y',y), \ldots,\bar
    q_n(x'^p,y',y)))\,\bigr).
  \end{multline*}

 \noindent Clearly,
\[ \scfpe \models \forall y (\phi'(a,a',m,b) \longrightarrow
  \phi(a,y)).\]
\noindent By Corollary \ref{C:coh_SCFinfty},
  the elements $a_1,\dotsc,a_N$ are linearly independent over the field
  $(K^p\cdot M)(b_i)$, so $\phi'(a,a',m,b_i)$ holds in $K$, since
  $K\models \phi(a,b_i)$ for $i<\omega$.
  By the previous claim, the $\lambda$-tame formula
  $\phi'(x,x';y',y)$ is an equation. Since the sequence $(m,b_0),\ldots
  (m,b_\omega)$ is indiscernible, we have that
  $\phi'(a,a',m,b_\omega)$ holds in $K$, so $K\models
  \phi(a,b_\omega)$, as desired.
\end{proof}

Together with Proposition \ref{P:lambda_BK}, the above theorem yields
the following:

\begin{cor}\label{C:SCF_eq}
  The (incomplete) theory $\scfp$ of separably closed fields of characteristic
  $p>0$ is equational.
\end{cor}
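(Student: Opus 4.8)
The plan is to combine the two main results of the preceding two sections, Proposition \ref{P:lambda_BK} and Theorem \ref{T:eq_SCF}, with the convention fixed at the start of Section 2 that, for an incomplete theory, a formula counts as an equation precisely when it is an equation in every completion. First I would recall the definition of equationality: I must show that every formula $\psi(x;y)$ is equivalent modulo $\scfp$ to a Boolean combination of equations. Proposition \ref{P:lambda_BK} already furnishes, for each such $\psi$, a Boolean combination of $\lambda$-tame formulae to which it is equivalent \emph{modulo the incomplete theory} $\scfp$; this is the key point, since it means one single Boolean combination works uniformly across all completions at once.

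Next I would identify the completions of $\scfp$. By Fact \ref{F:Delon} each $\scfpe$ is complete, and these (for $e$ ranging over $\N\cup\{\infty\}$) exhaust the completions of $\scfp$, since a model of $\scfp$ is determined up to elementary equivalence by its imperfection degree. Theorem \ref{T:eq_SCF} asserts that every $\lambda$-tame formula $\phi(x;y)$ is an equation in $\scfpe$, for every such $e$. Hence, by the convention above, each $\lambda$-tame formula is an equation for the incomplete theory $\scfp$.

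Putting these together: modulo $\scfp$ every formula is equivalent to a Boolean combination of $\lambda$-tame formulae, and each $\lambda$-tame formula is an equation for $\scfp$. Therefore every formula is equivalent modulo $\scfp$ to a Boolean combination of equations, which is exactly equationality of $\scfp$.

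The proof is essentially a bookkeeping synthesis, so there is no genuine mathematical obstacle left here — all the work has already been done in Proposition \ref{P:lambda_BK} and Theorem \ref{T:eq_SCF}. The only point demanding care is the interaction between incompleteness and the two cited statements: the reduction to $\lambda$-tame formulae must hold modulo the incomplete theory (so that a common Boolean combination serves every completion simultaneously), while the equationality of the individual $\lambda$-tame pieces is verified completion-by-completion via Theorem \ref{T:eq_SCF}. Matching these two quantifier patterns against the definition of an equation for an incomplete theory is the one step worth stating explicitly.
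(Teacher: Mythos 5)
Your proposal is correct and follows essentially the same route as the paper: combine Proposition \ref{P:lambda_BK} with Theorem \ref{T:eq_SCF}, noting that equationality of a formula for the incomplete theory means equationality in every completion $\scfpe$. The only cosmetic difference is that the paper separately remarks that sentences (which may occur in the Boolean combination) are equations by definition, a triviality your write-up subsumes without comment.
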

\begin{proof}
  Proposition \ref{P:lambda_BK} yields, that modulo $\scfp$ every
  formula is a Boolean combination of sentences (i.e.\ formulas
  without free variables) and $\lambda$-tame formulas. Sentences are
  equations by definition, $\lambda$-tame formulas are equations by
  Theorem \ref{T:eq_SCF}.
\end{proof}

Lemma \ref{L:eqEI} and Theorem \ref{T:eq_SCF} yield a partial
elimination of imaginaries for $\scfpe$.

\begin{cor}\label{C:EI_SCF}
The theory $\scfpe$ of separably closed fields of characteristic
  $p>0$ and imperfection degree $e$ has weak elimination of
  imaginaries, after adding canonical parameters for all instances of
  $\lambda$-tame formulae.
\end{cor}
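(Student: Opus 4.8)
The plan is to apply the general machinery of Lemma~\ref{L:eqEI} to the collection $\mathcal F$ consisting of all $\lambda$-tame formulae. First I would observe that $\mathcal F$ is closed under finite conjunctions by construction (the definition of $\lambda$-tame formulae explicitly closes under conjunctions), and that every formula is a Boolean combination of instances of formulae in $\mathcal F$ by Proposition~\ref{P:lambda_BK}. Moreover, every $\lambda$-tame formula is an equation in $\scfpe$ by Theorem~\ref{T:eq_SCF}. Thus the only hypothesis of Lemma~\ref{L:eqEI} that remains to be verified is that every instance of a $\lambda$-tame formula has a \emph{real} canonical parameter.

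Second, I would reduce the canonical-parameter question to a statement about fields of definition of ideals. Fixing a $p$-basis $\mathbf b=(b_1,\dotsc,b_e)$ as in Remark~\ref{R:fte_e_EI}, one has access to the $\Lambda^\nu$-functions, and the canonical base of a type over $K$ is computed as the field of definition of the vanishing ideal of the infinite $\Lambda$-closure of the relevant tuple. The key point is that in $\scfpe(\mathbf b)$ the theory has full elimination of imaginaries, so any definable set has a real canonical parameter in the larger language. I would then argue that an instance $\phi(x,b)$ of a $\lambda$-tame formula, although written with the plain $\lambda$-functions relative to the ambient $K^p$, can be expressed after fixing the $p$-basis in terms of the $\Lambda^\nu$, and hence is definable in $\scfpe(\mathbf b)$ over a tuple lying in the real sort. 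Its canonical parameter, being the field of definition of the associated ideal, is a real tuple.

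The main obstacle is the transition between the two languages: $\lambda$-tame formulae are defined uniformly in the imperfection degree using the abstract $\lambda_n^i$-functions, whereas elimination of imaginaries is available only in the expansion $\scfpe(\mathbf b)$ with constants for a fixed $p$-basis. I would handle this by noting that fixing $\mathbf b$ and re-expressing the generalised $\lambda$-values of the $q_k$ in terms of the $\Lambda^\nu$-values — exactly the kind of rewriting carried out in the proof of Theorem~\ref{T:eq_SCF}, where $\bar q_k(x^p,b)$ is expanded in the $K^p$-basis of monomials — turns any instance of a $\lambda$-tame formula into a formula definable in $\scfpe(\mathbf b)$. Since all $p$-bases have the same type, the canonical parameter computed over $\mathbf b$ does not genuinely depend on the choice of $\mathbf b$ for the \emph{instance} in question, and the field of definition of the vanishing ideal is the desired real canonical parameter.

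Finally, having verified all hypotheses of Lemma~\ref{L:eqEI}, I would conclude directly that $\scfpe$ has weak elimination of imaginaries after adding canonical parameters for all instances of $\lambda$-tame formulae. The weakness (as opposed to full elimination) is exactly what one expects when the new sorts are only canonical parameters of a distinguished family of equations rather than genuine real elements, and it matches the statement of the corollary.
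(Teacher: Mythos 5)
Your first paragraph is exactly the paper's (implicit) proof: apply Lemma~\ref{L:eqEI} to the family $\mathcal F$ of $\lambda$-tame formulae, which is closed under conjunction by definition, generates all formulae up to Boolean combinations by Proposition~\ref{P:lambda_BK}, and consists of equations by Theorem~\ref{T:eq_SCF}. But you then misread where the remaining hypothesis comes from. The corollary's phrase ``after adding canonical parameters for all instances of $\lambda$-tame formulae'' means precisely that these canonical parameters are adjoined as new sorts, so the hypothesis of Lemma~\ref{L:eqEI} that every instance of a formula in $\mathcal F$ has a real canonical parameter holds \emph{by construction} in the expanded structure. Nothing further needs to be verified, and the proof ends there. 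Your middle two paragraphs instead try to prove the stronger claim that these canonical parameters are already tuples from the field sort, which, if it worked, would essentially answer the question the paper poses immediately after the corollary (and would render the phrase ``after adding canonical parameters'' redundant).

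That stronger claim is where the genuine gaps lie. First, the corollary covers all imperfection degrees, including $e=\infty$; Remark~\ref{R:fte_e_EI} and the $\Lambda^\nu$-machinery require a finite $p$-basis, and the paper explicitly states that no explicit language with elimination of imaginaries is known for $\scfpi$. Your argument therefore cannot apply in the infinite-degree case, which is the case of main interest. Second, even for finite $e$, elimination of imaginaries in $\scfpe(\mathbf b)$ produces a code that is canonical only with respect to automorphisms fixing the constants $\mathbf b$ pointwise. A canonical parameter for an instance $\phi(x,b)$ in $\scfpe$ must be fixed by every automorphism stabilising $\phi(K,b)$ setwise, including those moving $\mathbf b$ to another $p$-basis; the fact that all $p$-bases have the same type gives you an automorphism moving one to the other, but gives no reason why the field of definition of the $\Lambda$-closure's vanishing ideal should be invariant under it. So the assertion that ``the canonical parameter computed over $\mathbf b$ does not genuinely depend on the choice of $\mathbf b$'' is unjustified and is exactly the point that would need a real argument. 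Dropping the two middle paragraphs and observing that the added canonical parameters satisfy the hypothesis of Lemma~\ref{L:eqEI} tautologically yields a correct and complete proof.
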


\begin{question}
Is there an explicit description of the canonical parameters of instances of
 $\lambda$-tame formulae, similar to the geometric sorts introduced
 in \cite{aP07}?
\end{question}

\section{Model Theory of differentially closed fields in positive
  characteristic}\label{S:MTDCFp}

The model theory of existentially closed differential fields in
positive characteristic has been thoroughly studied by Wood
\cite{cW73, cW76}.  In contrast to the characteristic $0$ case, the
corresponding theory is no longer $\omega$-stable nor superstable:
its universe is a separably closed field of infinite
imperfection degree (see Section \ref{S:MTSCF}).

\noindent A differential field $(K,\delta)$ is \emph{differentially
  closed} if it is existentially closed in the class of differential
fields. That is, whenever a quantifier-free
$\LL_\delta=\LL_{rings}\cup\{\delta\} $-formula $\phi(x_1,\ldots,
x_n)$, with parameters in $K$, has a realisation in a differential field
extension $(L,\delta_L)$ of $(K,\delta)$, then there is a realisation of
$\phi(x_1,\ldots,x_n)$ in $K$.

A \emph{differential polynomial} $p(x)$ is a polynomial in $x$ and its
higher order derivatives $\delta(x),\delta^2(x),\dotsc$ The
\emph{order} of $p$ is the order of the highest occurring derivative.

\begin{fact}\label{F:Wood}
  The class of differentially closed fields of positive characteristic
  $p$ can be axiomatised by the complete theory $\dcfp$ with
  following axioms:
\begin{itemize}
\item The universe is a differentially perfect differential field of
  characteristic $p$.
\item Given two differential polynomials $g(x)\neq 0$ and $f(x)$ in
  one variable with $\ord(g)<\ord(f)=n$ such that the \emph{separant}
  $s_f=\frac{\partial f}{\partial (\delta^nx)}$ of $f$ is not
  identically $0$, there exists an element $a$ with $g(a)\neq 0$ and
  $f(a)=0$.
\end{itemize}

\noindent The type of a differentially perfect differential subfield
is determined by its quantifier-free type. The theory $\dcfp$ is
stable but not superstable, and has quantifier-elimination in the
language $\LL_{\delta,s}=\LL_\delta\cup\{s\}$, where $s$ is the
following unary function:

\[ s(a)= \begin{cases} b, \text{ with } a=b^p \text{ in case }
    \delta(a)=0. \\
0, \text{ otherwise.} \end{cases} \]
\end{fact}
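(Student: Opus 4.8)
The plan is to prove quantifier elimination in $\LL_{\delta,s}$ as the central statement and to read off the remaining assertions as consequences, following the standard Blum-style treatment of $\mathsf{DCF}_0$ but replacing the characteristic-zero input by the separability results of Section \ref{S:fields}. The guiding observation is that an $\LL_{\delta,s}$-substructure is exactly a differentially perfect differential subfield: closure under $s$ forces every constant to be a $p$-th power, so $\C_A\subseteq A^p$, while $A^p\subseteq\C_A$ is automatic in characteristic $p$; conversely, on a differentially perfect field $\C_K=K^p$ makes $s$ a well-defined section of the Frobenius on the constants. In particular the clause ``the type of a differentially perfect differential subfield is determined by its quantifier-free type'' is literally the embedding content of quantifier elimination, so it drops out for free.

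First I would check that the displayed axioms are first order and hold in every existentially closed differential field $K$ of characteristic $p$. For differential perfection, a constant $c$ without a $p$-th root in $K$ acquires one in the differential field extension obtained by adjoining a formal root $b$ with $b^p=c$ and setting $\delta(b)=0$, a legitimate derivation since $\delta(b^p)=\delta(c)=0$ holds automatically; existential closedness then descends the root into $K$, so $\C_K=K^p$. For the one-variable scheme, when $\ord g<\ord f=n$ and $s_f\neq0$, the separant condition guarantees a differential field extension of $K$ carrying a generic solution $a$ of $f$ with $g(a)\neq0$---adjoining such an $a$ yields a separable extension---and existential closedness again descends the solution to $K$. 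Conversely, that a model $K$ of the axioms is existentially closed is the substantial direction: given a finite system of differential equations and inequations with a solution in some extension, I would triangulate the associated prime differential ideal by a characteristic set and use differential perfection, via Remark \ref{R:diffperf_sep}, to keep every intermediate extension separable, thereby reducing to a single pair $(f,g)$ in one variable to which the existence axiom applies. This is the step carrying the genuine positive-characteristic differential algebra---differential ideals need not be finitely generated and inseparability must be controlled throughout---and it is the main obstacle.

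With existential closedness established, quantifier elimination follows from the embedding test. Let $K_1,K_2\models\dcfp$ with $K_2$ sufficiently saturated share a common $\LL_{\delta,s}$-substructure $A$, which by the opening remark is a differentially perfect differential subfield. By Remark \ref{R:diffperf_sep} the extensions $A\subset K_i$ are separable, which is exactly what lets one transport differential loci: a single $a\in K_1$ generates over $A$ a differential field whose isomorphism type, including the behaviour of $s$, is encoded by $\operatorname{qftp}_{\LL_{\delta,s}}(a/A)$, and the existence axioms together with the saturation of $K_2$ realise the same locus there. Running this back and forth yields quantifier elimination, and completeness follows because the prime field $\Fp$, with $\delta=0$, is differentially perfect and embeds into every model, so all models agree over it.

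For the stability-theoretic claims, stability I would obtain by type counting: by quantifier elimination a complete $1$-type over a model $M$ is determined by $\operatorname{qftp}_{\LL_{\delta,s}}(a/M)$, hence by a minimal differential polynomial over $M$ together with the countably much separable-closure data recorded by the iterates of $s$, bounding the number of types over $M$ by $|M|^{\aleph_0}$. Non-superstability is then immediate: by the discussion opening Section \ref{S:MTDCFp} the pure-field reduct of a model is separably closed of infinite imperfection degree, whose theory is not superstable, and---fewer formulae only decreasing the number of types---superstability would pass from $\dcfp$ to this reduct, a contradiction.
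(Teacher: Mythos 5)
First, a point of order: the paper does not prove this statement. It is labelled a Fact and attributed to Wood \cite{cW73, cW76}; the only ingredient the paper itself supplies in this vicinity is Remark \ref{R:closure_s}, that the quotient field of an $\LL_{\delta,s}$-substructure is differentially perfect. So there is no in-paper proof to compare yours against, and your proposal has to be judged on its own as a reconstruction of Wood's theorem.

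Read that way, your outline follows the standard Blum-style template, and most of the peripheral steps are sound: the identification of (quotient fields of) $\LL_{\delta,s}$-substructures with differentially perfect differential subfields, the verification that existentially closed differential fields satisfy the axioms (your extension $b^p=c$ with $\delta(b)=0$ is a legitimate derivation precisely because $\delta(c)=0$), completeness over $\Fp$, stability by counting quantifier-free types, and non-superstability by passing to the $\scfpi$-reduct, which inherits $\lambda$-stability from $\dcfp$. The genuine gap is the one you yourself flag: the proof that a model of the two axioms is existentially closed. Essentially all of the substance of Wood's papers lives there. In characteristic $p$ the Ritt--Raudenbush basis theorem fails, radical differential ideals need not be finitely generated, and the reduction of an arbitrary finite system of equations and inequations to a single pair $(f,g)$ in one variable requires a theory of characteristic sets for \emph{separable} prime differential ideals, together with the control of inseparability that differential perfection provides at each elimination step. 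Saying you would ``triangulate by a characteristic set'' names the tool but does not supply the argument; in particular, it is not established that the locus of a single element $a$ over a differentially perfect $A$ is captured by one pair $(f,g)$ with $s_f\neq 0$ --- and this is also what your back-and-forth for quantifier elimination silently relies on when it ``realises the same locus'' in $K_2$. Until that step is carried out, the proposal is a correct plan rather than a proof, which is presumably why the authors state the result as a Fact and defer to Wood.
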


\noindent Note that every non-constant separable polynomial is a
differential polynomial of order $0$ whose separant is non-trivial
(since $\delta^0(x)=x$). In particular, every model $K$ of $\dcfp$ is
a separably closed field. Furthermore, the imperfection degree of $K$
is infinite: Choose for every $n$ in $\N$ an element $a_n$ in $K$ with
$\delta^n(a_n) =0$ but $\delta^{n-1}(a_n)\neq 0$. It is easy to see
that the family $\{a_n\}_{n\in\N}$ is linearly independent over $K^p$.

\begin{remark}\label{R:closure_s}
  The quotient field of any $\LL_{\delta,s}$-substructure of a model of
  $\dcfp$ is differentially perfect.
\end{remark}
\begin{proof}
  Let $\frac{a}{b}$ be an element in the quotient field with
  derivative $0$. The element $ab^{p-1}=\frac{a}{b} b^p$ is also a
  constant, so $ab^{p-1}=s(ab^{p-1})^p$. Hence \[ \frac{a}{b} =
  \Bigl(\frac{s(ab^{p-1})}{b}\Bigr)^p.\]
\end{proof}

From now on, we work inside a sufficiently saturated model $K$ of
$\dcfp$.
\begin{cor}\label{C:coh_DCFP}
 Consider two subfields $A$ and $B$ of $K$ containing an elementary
  substructure $M$ of $K$. Whenever \[A\ind^{\dcfp}_M B,\]
  \noindent the fields $K^p\cdot A$ and $K^p\cdot B$ are linearly
  disjoint over $K^p\cdot M$.
\end{cor}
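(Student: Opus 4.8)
The plan is to imitate the proof of Corollary \ref{C:coh_SCFinfty}, applying Lemma \ref{L:coh} with the predicate $\Pred=\C$ interpreted as the field of constants $\C_K$. The essential observation is that every model $K$ of $\dcfp$ is differentially perfect (the first axiom of Fact \ref{F:Wood}), so that $\C_K=K^p$ as definable subfields of $K$. Thus the assertion about $K^p$ is literally an assertion about $\C_K$, to which the constant-field machinery of Section \ref{S:fields} applies; moreover $\dcfp$ is stable and $\C_K$ is a definable subfield, so Lemma \ref{L:coh} is available with $T=\dcfp$ and $\Pred=\C$.

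First I would replace $A$ by the differential subfield $A'$ of $K$ it generates, namely the field generated by $\{\delta^k(a)\mid a\in A,\ k\geq 0\}$. Since $\delta$ is additive and satisfies Leibniz' rule, this field is closed under $\delta$, hence a differential subfield; and as $M\subseteq A$ is itself a differential field, we have $M\subseteq A\subseteq A'$. By Fact \ref{F:Wronski} together with the observation following it, every differential subfield of $K$ is $\C_K$-special, so in particular $A'$ is $\C_K$-special.

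Next I would verify that independence is inherited when passing from $A$ to $A'$. Each iterated derivative $\delta^k(a)$ lies in $\operatorname{dcl}(a)$ and the field operations are $\emptyset$-definable, so $A'\subseteq\operatorname{dcl}(MA)\subseteq\acl(MA)$; by finite character and the invariance of non-forking under algebraic closure, $A\ind^{\dcfp}_M B$ yields $A'\ind^{\dcfp}_M B$. Now Lemma \ref{L:coh}, applied with $\Pred=\C$ to the $\C_K$-special field $A'$ and the field $B$ (both containing $M$), gives that $\C_K\cdot A'$ and $\C_K\cdot B$ are linearly disjoint over $\C_K\cdot M$.

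Finally, differential perfectness gives $\C_K=K^p$, and since $A\subseteq A'$ the field $K^p\cdot A$ is a subfield of $K^p\cdot A'$; as linear disjointness is inherited by subfields, $K^p\cdot A$ and $K^p\cdot B$ are linearly disjoint over $K^p\cdot M$, as desired. I do not expect a real obstacle here: the whole argument is parallel to the separably closed case, and the only points requiring attention are the identification $\C_K=K^p$ (which is exactly differential perfectness) and the fact that a differential subfield is automatically $\C_K$-special. Once these are in place, the conclusion is immediate from Lemma \ref{L:coh}.
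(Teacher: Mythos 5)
Your argument is correct and follows the same skeleton as the paper's proof — replace $A$ by a suitable closure that is $\Pred$-special and invoke Lemma \ref{L:coh} — but it reaches speciality by a different chain of lemmas. The paper takes $A'$ to be the quotient field of the $\LL_{\delta,s}$-structure generated by $A$ (so closed under $\delta$ \emph{and} the $p$-th-root-of-constants function $s$), observes via Remark \ref{R:closure_s} that this field is differentially perfect and via Remark \ref{R:diffperf_sep} that it is therefore $K^p$-special, and applies Lemma \ref{L:coh} directly with $\Pred$ interpreted as $K^p$. You instead close $A$ only under $\delta$, use the Wronskian criterion (Fact \ref{F:Wronski} and the remark following it) to see that any differential subfield is $\C_K$-special, apply Lemma \ref{L:coh} with $\Pred=\C$, and then translate the conclusion using the identity $\C_K=K^p$, which holds in the ambient model $K$ by the differential perfectness axiom of $\dcfp$. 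Both routes are sound; yours has the small advantage of never needing the function $s$ or the two remarks about differentially perfect subfields, at the cost of the extra (but harmless) translation step $\C_K=K^p$ and the passage from $K^p\cdot A'$ down to the subfield $K^p\cdot A$, which you correctly justify by noting that linear disjointness over a fixed base is inherited by intermediate fields. Your verification that $A'\ind_M B$ follows from $A\ind_M B$ because $A'\subseteq\operatorname{dcl}(MA)$ is also correct and is the analogue of the step the paper leaves implicit by saying ``as in the proof of Corollary \ref{C:coh_SCFinfty}''.
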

\begin{proof}
  The quotient field $A'$ of the $\LL_{\delta,s}$-structure
  generated by $A$ is $K^p$-special, by the Remarks \ref{R:diffperf_sep} and
  \ref{R:closure_s}. The result now follows from
  Lemma \ref{L:coh}, as in the proof of Corollary \ref{C:coh_SCFinfty}.
\end{proof}
We will now present a relative quantifier elimination, by
isolating the formulae which will be our candidates for the
equationality of $\dcfp$.

\begin{definition}\label{D:delta-tame}
  Let $x$ be a tuple of variables. A formula $\phi(x)$ in the language
  $\LL_\delta$ is $\delta$\emph{-tame} if there are differential
  polynomials $q_1,\ldots, q_m$, with $q_i$ in the differential ring
  $\Z\{X,T_1,\ldots, T_{i-1}\}$, and a system of differential
  equations $\Sigma$ in $\Z\{X,T_1,\ldots, T_{n}\}$ such that
  \[
  \phi(x )\;\;=\;\;\exists\, z_1\ldots \exists z_n  \Bigl(
  \Land_{j=1}^{n} z_j^p\doteq q_j(x,z_1,\ldots, z_{j-1}) \land\;
  \Sigma(x,z_1,\ldots, z_n)\Bigr).\]
\end{definition}

\begin{prop}\label{P:delta_BK}
  Every formula in $\dcfp$ is a Boolean combination of $\delta$-tame
  formulae.
\end{prop}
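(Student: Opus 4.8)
The plan is to mimic the proof of Proposition~\ref{P:lambda_BK} for the separably closed case, but working with the language $\LL_{\delta,s}$ instead of $\LL_\lambda$. By the quantifier elimination given in Fact~\ref{F:Wood}, every formula modulo $\dcfp$ is equivalent to a Boolean combination of atomic $\LL_{\delta,s}$-formulae, that is, of equations $t(x)\doteq 0$ for $\LL_{\delta,s}$-terms $t$. Thus it suffices to show that each such equation is equivalent to a Boolean combination of $\delta$-tame formulae, and I would proceed by induction on the number of occurrences of the function symbol $s$ in $t$.

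For the base case, if no $s$ occurs, then $t(x)\doteq 0$ is an ordinary differential-polynomial equation, hence a $\delta$-tame formula (take $n=0$ and let $\Sigma$ be the single equation $t(x)\doteq 0$). For the induction step I would isolate an innermost occurrence of $s$ and write $t(x)=r(x,s(q(x)))$ for some differential polynomial $q$ and some $\LL_{\delta,s}$-term $r$ with strictly fewer occurrences of $s$. The key is to capture the definition of $s$: by Fact~\ref{F:Wood}, the value $w=s(q(x))$ is the unique element with $w^p=q(x)$ when $\delta(q(x))=0$, and is $0$ otherwise. I would therefore split on whether $\delta(q(x))\doteq 0$ holds. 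On the branch where $\delta(q(x))\neq 0$, replace $s(q(x))$ by $0$, obtaining $r(x,0)\doteq 0$, which has fewer occurrences of $s$ and is handled by induction. On the branch where $\delta(q(x))\doteq 0$, introduce an existentially quantified variable $z$ constrained by $z^p\doteq q(x)$ and substitute $z$ for the occurrence of $s(q(x))$, so that $t(x)\doteq 0$ becomes
\[
\delta(q(x))\doteq 0 \;\land\; \exists z\,\bigl( z^p\doteq q(x)\;\land\; r(x,z)\doteq 0\bigr),
\]
after noting that the constraint $z^p\doteq q(x)$ together with $\delta(q(x))\doteq 0$ forces $z=s(q(x))$ uniquely (since $K$ is separably closed, $p$-th roots are unique). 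Applying the induction hypothesis to the inner term $r(x,z)\doteq 0$, viewed as an $\LL_{\delta,s}$-equation in the enlarged tuple $(x,z)$, I obtain a Boolean combination of $\delta$-tame formulae in $(x,z)$, and the defining form of Definition~\ref{D:delta-tame}—a block of equations $z_j^p\doteq q_j$ followed by a differential system $\Sigma$—is exactly designed to absorb the extra existential variable $z$ and the extra $p$-th-power equation $z^p\doteq q(x)$ into a new $\delta$-tame formula.

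The bookkeeping obstacle, and the step I expect to require the most care, is that a $\delta$-tame formula is not literally closed under Boolean combinations: its syntactic shape is a single existential block of staggered $p$-th-power equations closed by a system $\Sigma$. So when the induction hypothesis returns a Boolean combination $BK(\psi_1,\dots,\psi_k)$ of $\delta$-tame formulae $\psi_i(x,z)$, I must push the outer existential quantifier $\exists z\,(z^p\doteq q(x)\land \cdots)$ and the conjunct $\delta(q(x))\doteq 0$ through the Boolean connectives. Conjunctions and the leading $p$-th-power equation merge cleanly into a single staggered block, but disjunctions and especially negations do not, so I would argue that the \emph{overall formula} $t(x)\doteq 0$ is equivalent to a Boolean combination of $\delta$-tame formulae rather than to a single $\delta$-tame formula—exactly as Proposition~\ref{P:lambda_BK} asserts only a Boolean combination. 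Concretely, each $\psi_i(x,z)$ is $\delta$-tame with its own existential block, and prefixing the shared equation $z^p\doteq q(x)$ (with $z$ threaded in as the first staggered variable $T_1$) turns each $\exists z\,(z^p\doteq q(x)\land \psi_i(x,z))$ into a $\delta$-tame formula $\phi_i(x)$; the final answer is then the same Boolean combination $BK(\phi_1,\dots,\phi_k)$ on the branch $\delta(q(x))\doteq 0$, combined disjunctively with the inductively-obtained Boolean combination coming from the branch $\delta(q(x))\neq 0$. Verifying that these substitutions and quantifier manipulations genuinely preserve $\delta$-tameness—i.e. that the resulting systems still fit the nested template of Definition~\ref{D:delta-tame}—is the crux of the argument.
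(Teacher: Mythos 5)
Your proposal is correct and follows essentially the same route as the paper: quantifier elimination in $\LL_{\delta,s}$, induction on the number of occurrences of $s$, the case split on $\delta(q(x))\doteq 0$, and absorbing $\exists z\,(z^p\doteq q(x)\land\,\cdot\,)$ into each $\delta$-tame constituent $\psi_i$ separately before reassembling the Boolean combination. Your observation that this last step is legitimate because the witness $z$ is unique (so the existential commutes with the Boolean connectives) is exactly the point the paper leaves implicit.
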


\begin{proof}
  The proof is a direct adaptation of the proof of Proposition
  \ref{P:lambda_BK}. We need only show that the equation $t(x)\doteq0$
  is a Boolean combination of $\delta$-tame formulae, for every
  $\LL_{\delta,s}$-term $t(x)$. Proceed by induction on the number of
  occurrences of $s$ in $t$. Suppose that $t(x)=r(x,s(q(x)))$, for
  some $\LL_{\delta,s}$-term $r$ and a polynomial $q$, By induction,
  the equation $r(x,z)\doteq0$ is equivalent to a Boolean combination
  $BK(\psi_1(x,z),\dotsc)$ of $\delta$-tame formulae. Thus
  $t(x)\doteq0$ ist equivalent to
  \[\bigl(\neg\delta(q(x))\doteq0\;\land\;r(x,0)\doteq0\bigr)\;\,\lor\;\,
  \bigl(\delta(q(x))\doteq0\;\land\;
  BK(\exists z\,z^p\doteq q(x)\,\land\,\psi_1(x,z),\dotsc)\bigr),\] which is, by
  induction, a Boolean combination of $\delta$-tame formulae.
\end{proof}

We conclude this section with a homogenisation result for
$\delta$-tame formulae, as in Proposition \ref{P:lambda_hom}.

\begin{prop}\label{P:homo_dcf}
  Given a $\delta$-tame formula $\phi(x_1,\ldots, x_n)$ and natural numbers
  $k_1,\ldots, k_n$, there is a $\delta$-tame formula $\phi'(x_0,\ldots,
  x_n)$ such that
    \[  \dcfp\proves\forall x_0 \ldots \forall x_n \Bigl(
  \phi'(x_0,\ldots, x_n)\longleftrightarrow \Bigl(
  \phi\Bigl(\frac{x_1}{x_0^{k_1}},\ldots, \frac{x_n}{x_0^{k_n}}\Bigr) \lor x_0
  \doteq 0 \Bigr) \Bigr)  .\]
\end{prop}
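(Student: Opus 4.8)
The plan is to prove this by induction on the degree $D$ of the $\delta$-tame formula $\phi$, closely mirroring the structure of the proof of Proposition \ref{P:lambda_hom}. First I would unwind the definition of a $\delta$-tame formula: $\phi(x_1,\ldots,x_n)$ has the shape
\[
\exists z_1\ldots\exists z_n\Bigl(\Land_{j=1}^{n} z_j^p\doteq q_j(x,z_{<j})\;\land\;\Sigma(x,z_1,\ldots,z_n)\Bigr),
\]
and I want to produce $\phi'$ by substituting $x_i\mapsto x_i/x_0^{k_i}$ and then clearing denominators by powers of $x_0$, exactly as in the polynomial/$\lambda$-tame case. The guiding principle is that multiplying a differential equation through by a power of $x_0$ does not change its zero set away from $x_0=0$, while on $\{x_0\doteq 0\}$ the homogenised equations should become trivially satisfiable so that $\phi'$ holds there unconditionally.

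The key steps, in order, are as follows. First I would handle the purely polynomial part $\Sigma$: each differential polynomial in $\Sigma$, after the substitution $x_i\mapsto x_i/x_0^{k_i}$, becomes a differential rational function whose denominator is a power of $x_0$ (here one uses that $\delta(x_i/x_0^{k_i})$ and higher derivatives still have only powers of $x_0$ in the denominator, since $\delta(1/x_0^{k})=-k\,\delta(x_0)/x_0^{k+1}$); multiplying through by a sufficiently high power $x_0^{N}$ yields genuine differential polynomials $\Sigma'$ in $x_0,\ldots,x_n$. Second, and this is the delicate part, I must treat the defining equations $z_j^p\doteq q_j(x,z_{<j})$. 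Applying the substitution to $q_j$ and clearing denominators gives $x_0^{M}z_j^p\doteq q_j'(x_0,\ldots,x_n,z_{<j})$; to keep this in $\delta$-tame form I would rescale the existential witnesses by setting $w_j=x_0^{m_j}z_j$ for suitable exponents $m_j$, chosen so that after substitution the new equations read $w_j^p\doteq \tilde q_j(x_0,\ldots,x_n,w_{<j})$ for differential polynomials $\tilde q_j\in\Z\{X_0,\ldots,X_n,W_1,\ldots,W_{j-1}\}$. The exponents $m_j$ must be calibrated so that $p\,m_j$ absorbs exactly the denominator introduced in $q_j$, which forces a recursive choice since $q_j$ depends on $z_1,\ldots,z_{j-1}$. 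Finally I would verify the biconditional: for $x_0\neq 0$ the rescaling $w_j=x_0^{m_j}z_j$ is an invertible change of existential witnesses, so $\phi'$ and $\phi(x_1/x_0^{k_1},\ldots)$ have the same witnesses; and for $x_0\doteq 0$ the homogenised equations should degenerate so that taking all $w_j\doteq 0$ satisfies them, making $\phi'$ hold on $\{x_0\doteq 0\}$.

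The main obstacle I anticipate is the bookkeeping of the denominator exponents in the presence of the derivation together with the $p$-th powers in the defining equations $z_j^p\doteq q_j$. Unlike in Proposition \ref{P:lambda_hom}, where the $\lambda$-functions are handled by a single generalised inversion, here the nested existential witnesses $z_j$ are coupled through a triangular system, and each $\delta$ applied to a substituted variable raises the denominator degree in a way that propagates down the chain $q_1,\ldots,q_n$. I would need to choose the global power $N$ and the individual rescaling exponents $m_j$ large enough and compatibly, and then confirm that the $p$-th-power structure is preserved under rescaling (using that $p\mid p\,m_j$ automatically, so $x_0^{p m_j}$ is indeed a $p$-th power absorbable into $w_j^p$). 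Provided these exponents are chosen correctly, verifying the degenerate case $x_0\doteq 0$ reduces to checking that every $\tilde q_j$ and every member of $\Sigma'$ vanishes when $x_0\doteq 0$ and all $w_j\doteq 0$, which follows from the positive powers of $x_0$ introduced during clearing of denominators.
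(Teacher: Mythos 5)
Your proposal is correct and follows essentially the same route as the paper: induct on the number of existential witnesses, clear denominators in the polynomial part, and rescale each witness $z_j$ by a power of $x_0$ calibrated so that the $p$-th-power equation survives (the paper writes the denominator as $x_0^{pN-1}$ and sets the new equation to $w^p\doteq x_0\cdot q'$, leaving the spare factor of $x_0$ that makes everything degenerate to a true statement when $x_0\doteq 0$). The recursive calibration of exponents down the triangular chain that you describe is exactly what the paper's induction on $\psi(x,z)$ — with the new variable $z$ assigned its own exponent $N$ — accomplishes.
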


\begin{proof}
We prove it by induction on the number of existential quantifiers iny
$\phi$. If $\phi$ is a system $\Sigma$ of differential
equations, rewrite

\[\Sigma(\frac{x_1}{x_0^{k_1}},\ldots, \frac{x_n}{x_0^{k_n}})
\Longleftrightarrow \frac{\Sigma'(x_0,\ldots, x_n)}{x^N_0} ,\]

\noindent for some natural number $N$ and a system of differential
equations $\Sigma'(x_0,\ldots, x_n)$. Set
\[ \phi'(x_0,\ldots, x_n)= x_0\cdot \Sigma'(x_0,\ldots, x_n) .\]

\noindent For a general $\delta$-tame formula, write
\[ \phi(x_1,\ldots, x_n)=\exists z\ \left( z^p\doteq q(x_1,\ldots,
    x_n)\land \psi(x_1,\ldots,x_n, z) \right) ,\]

\noindent for some polynomial $q$ and a $\delta$-tame formula $\psi$
with one existential quantifier less. There is a polynomial
$q'(x_0,\ldots, x_n)$ such that

\[ q(\frac{x_1}{x_0^{k_1}},\ldots,
  \frac{x_n}{x_0^{k_n}})=\frac{q'(x_0,\ldots,x_n)}{x_0^{pN-1}},\]

\noindent for some natural number $N$. By induction, there is a
$\delta$-tame formula $\psi'(x_0,\ldots, x_n, z)$ such that

\[ \dcfp\proves\forall x_0 \ldots \forall x_n \forall z
  \Bigl( \psi'(x_0,\ldots, x_n, z)\longleftrightarrow \Bigl(
  \psi\Bigl(\frac{x_1}{x_0^{k_1}},\ldots, \frac{x_n}{x_0^{k_n}},
  \frac{z}{x^N_0}\Bigr) \lor x_0 \doteq 0 \Bigr) \Bigr) .\]

\noindent Set now

\[ \phi'(x_0,\ldots, x_n) = \exists z \left( z^p\doteq x_0\cdot
    q'(x_1,\ldots, x_n)\land \psi'(x_0,x_1,\ldots,x_n, z) \right).\]
\end{proof}

\section{Equationality of $\dcfp$}\label{S:EqDCFp}

We have now all the ingredients to show that the theory $\dcfp$ of
existentially closed differential fields of positive characteristic
$p$ is equational. Working inside a sufficiently saturated model $K$
of $\dcfp$, given a $\delta$-tame formula in a fied partition of the
variables $x$ and $y$, one can show, similar to the proof of Theorem
\ref{T:eq_SCF}, that the set $\phi(a,y)$ is \icl. However, we will
provide a proof, which resonates with Srour's approach \cite{gS88},
using Lemma \ref{L:Srour}. We would like to express our gratitude to
Zo\'e Chatzidakis and Carol Wood for pointing out Srour's result.

\begin{theorem}[Srour \cite{gS88}]\label{T:eqDCFp}
  In any partition of the variables, the $\delta$-tame formula
  $\phi(x;y)$ is an equation.
\end{theorem}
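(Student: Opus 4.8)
The plan is to verify criterion (\ref{L:Srour_ind}) of Lemma \ref{L:Srour}, following the template already used for $\scfp$ in Theorem \ref{T:eq_SCF} but routed through Srour's independence criterion rather than directly through the \icl-condition. Fix a regular cardinal $\kappa>|T|$, a tuple $a$ of length $|x|$, and elementary substructures $M\subset N$ with $a\ind_M N$ and $|N|=\kappa$. By Proposition \ref{P:delta_BK} it suffices to treat a single $\delta$-tame formula $\phi(x;y)$, so write
\[
\phi(x;y)=\exists\,z_1\ldots\exists z_n\Bigl(\Land_{j=1}^{n} z_j^p\doteq q_j(x,y,z_1,\ldots,z_{j-1})\,\land\,\Sigma(x,y,z_1,\ldots,z_n)\Bigr).
\]
I must produce a subset $B_0\subset N$ with $|B_0|<\kappa$ such that $\tp(a/MB_0)\proves\tp_\phi^+(a/N)$, i.e.\ the truth of every instance $\phi(a,b)$ with $b\in N$ is forced by the type of $a$ over $M$ together with a small amount of data from $N$.

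The key mechanism is the linear-disjointness transfer supplied by Corollary \ref{C:coh_DCFP}: since $a\ind_M N$, the fields $K^p\cdot\deltacl M a$ and $K^p\cdot N$ are linearly disjoint over $K^p\cdot M$, where $\deltacl M a$ denotes the differential field generated by $M$ and $a$. The point of a $\delta$-tame formula is that the existential witnesses $z_j$ are constrained by $z_j^p\doteq q_j$, so each witness lies in a \emph{$K^p$-coordinate} determined by the $q_j$-values; whether such a $p$-th root exists, and whether the accompanying system $\Sigma$ is satisfied, is a linear-algebra condition over $K^p$ on the monomials in $a$. First I would choose, as in the proof of Theorem \ref{T:eq_SCF}, a $(K^p\cdot M)$-basis $a_1,\dotsc,a_N$ of the monomials in $a$ occurring in the relevant differential polynomials and expand $q_j(a,b)=\sum_\ell q_{j,\ell}(a',b)^p\,a_\ell$. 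Linear disjointness then guarantees that the $a_\ell$ remain independent over $(K^p\cdot M)(b)$ for every $b\in N$, so the satisfaction of $\phi(a,b)$ is governed by identities that hold already at the level of the $M$-definition of $\tp(a/M)$. The homogenisation of Proposition \ref{P:homo_dcf} is used to clear denominators introduced when passing from $q_j(a,b)$ to the associated homogeneous system, exactly as Proposition \ref{P:lambda_hom} was used in the $\scfp$ case.

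Concretely, I would run the continuous-chain construction of Lemma \ref{L:Srour}: build $(N_i)_{i<\kappa}$ and apply the induction hypothesis on the degree/quantifier-count of $\phi$ to the reduced formula $\psi$ (with one fewer existential witness), whose instances over $N$ are already controlled by a small $B_0$. The witnesses $z_j$ for $\phi(a,b)$ are recovered from those of $\psi$ by adjoining the relevant $p$-th roots, which exist because $K$ is differentially perfect (Fact \ref{F:Wood}) and the constancy conditions $\delta(q_j(a,b))\doteq 0$ are themselves instances of lower-degree $\delta$-tame (indeed polynomial) conditions already decided by $\tp(a/MB_0)$. Assembling $B_0$ as the union of the finitely many small sets produced at each inductive stage, and invoking regularity of $\kappa$ to keep $|B_0|<\kappa$, yields $\tp(a/MB_0)\proves\tp_\phi^+(a/N)$.

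The main obstacle I anticipate is the interaction between the \emph{nested} $p$-th-root witnesses and the derivation: each $z_j^p\doteq q_j(x,z_1,\ldots,z_{j-1})$ depends on the previously chosen witnesses, so the linear-disjointness argument cannot be applied in one shot but must be threaded through the induction, re-selecting a $K^p$-basis at each layer while ensuring the differential constraints $\Sigma$ and the constancy conditions $\delta(q_j)\doteq0$ transfer correctly from $b_i$ to $b_\kappa$. Keeping the bookkeeping of the enlarged parameter tuples consistent with the homogenisation step — so that what is ultimately an equation is the homogenised $\delta$-tame formula and not merely the original $\phi$ — is the delicate part, and is exactly where the hypothesis that $K$ is differentially perfect (equivalently, $\C_K=K^p$, making $s$ total on constants) does the essential work.
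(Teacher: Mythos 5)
Your skeleton matches the paper's: induction on the number of existential quantifiers, Srour's criterion from Lemma \ref{L:Srour} (the paper uses the cleaner reformulation in Remark \ref{R:Srour}), Corollary \ref{C:coh_DCFP} to keep a $(K^p\cdot M)$-basis of the differential monomials in $a$ linearly independent over $K^p\cdot N$, and the rewriting $q(a,y)=\sum_i q_i(a'^p,m,y)\cdot a_i$. But the proposal is missing the one idea that actually closes the induction. The induction hypothesis applies to $\psi(x,y,z)$, a formula with an extra variable $z$; controlling $\tp_\psi^+(a/N)$ says nothing about instances $\psi(a,b,c)$ where the witness $c=s(q(a,b))$ depends on \emph{both} $a$ and $b$. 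Your plan to ``recover the witnesses by adjoining $p$-th roots, which exist because $K$ is differentially perfect'' only yields existence of the witness; it does not decouple it from $b$, so you cannot hand the reduced formula to the induction hypothesis with a legitimate partition of variables. The paper resolves this with a Claim you do not have: for the sub-case in which every differential monomial in $x$ occurs in $q$ as a $p$-th power, a \emph{second} basis expansion --- this time a $K^p$-basis $1=b_0,\dotsc,b_N$ of the monomials in $b$ --- writes $q(x,b)=\sum_i q_i(x,b')^p\cdot b_i$, and uniqueness of representations over a $K^p$-basis forces $z\doteq q_0(x,b')$ together with $q_i(x,b')\doteq 0$ for $i\geq 1$. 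The existential quantifier is thereby \emph{eliminated by substitution}, producing a $\delta$-tame formula $\psi(x,b,q_0(x,b'))$ in $x$ versus $(b,b')$ with $n-1$ quantifiers, to which the induction hypothesis genuinely applies. The outer argument then reduces the general case to this sub-case precisely because the coefficients $q_i(x'^p,y',y)$ produced by the first expansion have all their $x'$-monomials as $p$-th powers, and finishes with the transfer step $aa'\equiv_M\tilde a\tilde a'$ showing $q(\tilde a,b)=q_0(\tilde a'^p,m,b)$.

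Two smaller points. First, ``running the continuous-chain construction of Lemma \ref{L:Srour}'' is a confusion: that chain occurs inside the proof that criterion (3) implies equationality; to \emph{verify} the criterion you must exhibit $B_0$ directly, which the paper does (in effect with $B_0=\emptyset$ once $a'$ is adjoined to $a$, via Remark \ref{R:Srour}). Second, Proposition \ref{P:homo_dcf} plays no role in the paper's proof of this theorem --- no denominators arise, because the quantifier is removed by substitution rather than by inverting a determinant as in the Claim of Theorem \ref{T:eq_SCF}; homogenisation is only needed later, in Proposition \ref{P:equiv_lambdadelta}. As written, the proposal identifies the delicate point (the nested, $b$-dependent witnesses) but does not supply the mechanism that handles it, so it does not yet constitute a proof.
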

\noindent Srour proved this for the equivalent notion of S-formulae,
cf.\  Definition \ref{D:S-formula} and Lemma \ref{L:S_and_delta_equiv}.
\begin{proof}
  We prove it by induction on the number $n$ of existential
  quantifiers. For $n=0$, the formula $\phi(x;y)$ is a system of
  differential equations, which is clearly an equation, by Lemma
  \ref{L:eq_allchar_delta}.

  For $n>0$, write $\phi(x,y)$ as

  \[\exists z \Bigl( z^p\doteq q(x,y)\; \land\; \psi(x,y,z) \Bigr),\]

  \noindent where $\psi(x,y,z)$ is a $\delta$-tame formula with $n-1$
  existential quantifiers.

  \begin{claimstern}
    Suppose that every differential monomial in $x$ occurs
    in $q(x,y)$ as a $p^\text{th}$-power. Then $\phi(x;y)$ is an equation.
  \end{claimstern}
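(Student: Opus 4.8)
The plan is to use Srour's criterion, namely Lemma~\ref{L:Srour}, via the equivalent formulation in Remark~\ref{R:Srour}: to show that $\phi(x;y)=\exists z\,(z^p\doteq q(x,y)\land\psi(x,y,z))$ is an equation, it suffices to show that whenever $a\ind_M N$ for an elementary substructure $M\subset N$, then $\tp(a/M)\proves\tp_\phi^+(a/N)$. So I would fix such $a$, $M$, $N$ and a tuple $b$ in $N$ with $\models\phi(a,b)$, and produce a witness $c$ with $c^p=q(a,b)$ and $\models\psi(a,b,c)$, the point being to locate $c$ (up to the relevant type) using only $\tp(a/M)$ together with the data of $b$. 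First I would pick the witness $c\in K$ with $c^p=q(a,b)$ and $\models\psi(a,b,c)$ guaranteed by $\phi(a,b)$ holding.

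\noindent The key technical input is the hypothesis that every differential monomial in $x$ occurs in $q(x,y)$ only as a $p^{\text{th}}$-power. This means that, writing $q(x,y)=\tilde q(x^{[p]},y)$ where $x^{[p]}$ denotes the tuple of $p^{\text{th}}$-powers of the differential monomials in $x$, the quantity $q(a,b)$ lies in $K^p\cdot M(b)$ once we know $a$ through $M$; more precisely $q(a,b)$ is a $K^p$-linear combination of monomials in $b$ with coefficients that are $p^{\text{th}}$-powers of elements whose base lives in $M(a)$, and since the relevant powers are taken in $K^p$, the coefficient field $K^p\cdot M$ controls the computation. Using Corollary~\ref{C:coh_DCFP}, from $a\ind_M N$ we obtain that $K^p\cdot M(a)$ and $K^p\cdot N$ are linearly disjoint over $K^p\cdot M$. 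The effect is that the equation $c^p=q(a,b)$, read as a $K^p$-linear relation, is already determined over $M$: I would extract from the linear-disjointness a representation of $c^p$ as a $K^p\cdot M$-combination of the $b$-monomials, and since $p^{\text{th}}$-roots in the separably closed field $K$ are unique when they exist, the witness $c$ itself is pinned down by data over $M$ together with $b$.

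\noindent With $c$ so controlled, I would then invoke the induction hypothesis on $\psi$, which has $n-1$ existential quantifiers and is therefore an equation by the inductive step of the theorem. The remaining task is to package the pair $(a,c)$ appropriately: one extends the partition so that $z$ is grouped with $x$, notes that $\tp(ac/M)$ controls $\tp_\psi^+(ac/N)$ by induction, and combines this with the fact that $c$ was determined over $M$ from the $q$-equation to conclude $\tp(a/M)\proves\tp_\phi^+(a/N)$. The main obstacle I anticipate is bookkeeping the interaction between the uniqueness of $p^{\text{th}}$-roots and the linear-disjointness statement: one must be careful that $c$ is genuinely determined (not merely $c^p$), which is where differential perfectness of the relevant substructures, via Remark~\ref{R:closure_s} and the $s$-function, is needed to guarantee the root exists and lies in the controlled field. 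Properly aligning the coefficient field $K^p\cdot M$ with the ambient independence is the crux; once that is in place, the reduction to $\psi$ and the appeal to Remark~\ref{R:Srour} are routine.
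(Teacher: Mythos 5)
Your strategy inverts the architecture of the paper's argument. The paper proves this Claim by a purely syntactic quantifier-elimination step and reserves Srour's criterion and the linear disjointness of Corollary \ref{C:coh_DCFP} for the \emph{subsequent} reduction of a general $\delta$-tame formula to the special form of the Claim. Concretely, one chooses a $K^p$-basis $1=b_0,\dotsc,b_N$ of the differential monomials in $b$ occurring in $q(x,b)$ and writes $q(x,b)=\sum_{i=0}^N q_i(x,b')^p\cdot b_i$ (possible precisely because the $x$-monomials occur as $p^{\text{th}}$-powers); $K^p$-linear independence of the $b_i$ then forces $z= q_0(x,b')$ and $q_i(x,b')\doteq 0$ for $i\geq 1$, so $\phi(x,b)$ is equivalent to an instance of a $\delta$-tame formula $\psi'(x;y,y')$ with $n-1$ existential quantifiers, and the induction hypothesis finishes. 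No independence enters at this stage: the linear disjointness you invoke does no work here, because the coefficients $q_i(x,b')^p$ already lie in $K^p$, so $q(a,b)$ is trivially a $K^p\cdot M$-combination of the $b$-monomials.

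The genuine gap in your proposal is the final step. You group the witness $c$ (with $c^p=q(a,b)$) together with $a$ and want to apply the induction hypothesis, via Remark \ref{R:Srour}, to $\psi$ with object variables $(x,z)$; this requires $ac\ind_M N$, which fails in general. Indeed, $c=\sum_j u_j(a)\beta_{0j}$, where the $\beta_{0j}$ are the $b_0$-coordinates of the $b$-monomials: these lie in $N$ and typically outside $\acl(Ma)$, yet they belong to $\operatorname{dcl}(a,c)$, so $\tp(ac/N)$ forks over $M$ (already $c=a\beta$ with $\beta\in N\setminus\acl(M)$ gives a counterexample). For the same reason, the relations $\sum_j u_j(x)\beta_{ij}\doteq 0$ for $i\geq 1$, which are exactly what is needed for $q(x,b)$ to be a $p^{\text{th}}$-power, carry parameters from $N$ and are therefore not consequences of $\tp(a/M)$; so $c$ is \emph{not} ``pinned down by data over $M$'', only by data over $N$. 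The repair is the paper's move: substitute the explicit witness $q_0(x,b')$ into $\psi$ and place the new parameters $b'$ on the \emph{parameter} side, so that the induction hypothesis applies to $\psi'(x;y,y')$ with object variable $x$ alone. After that reduction you may, if you like, run Remark \ref{R:Srour} on $\psi'$ using $a\ind_M N$ and $b,b'\in N$ — but it is already unnecessary, since every instance of $\phi$ is an instance of the equation $\psi'$ and the DCC is inherited directly.
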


  \begin{claimsternproof}
    It suffices to prove that $\phi(x,b)$ is equivalent to a
    $\delta$-tame formula $\psi'(x,b,b')$ with $n-1$ existential
    quantifiers, for some tuple $b'$. Choose a $K^p$-basis
    $1=b_0,\dotsc,b_N$ of the differential monomials in $b$
    occurring in $q(x,b)$ and write
    $q(x,b)=\sum_{i=0}^Nq_i(x,b')^p\cdot b_i$. Then
    $\phi(x,b)$ is equivalent  (in $\dcfp$) to
    \[\exists z \Bigl( z\doteq
    q_0(x,b')\;\land\; \Land\limits_{i=1}^N q_i(x,b')\doteq 0
    \land\; \psi(x,b,z) \Bigr),\] which is equivalent
    to \[\psi'(x,b,b')=\Bigr(\Land\limits_{i=1}^N q_i(x,b')\doteq
    0 \;\land\; \psi(x,b,q_0(x,b')) \Bigr).\]
  \end{claimsternproof}

  \noindent In order to show that $\phi(x;y)$ is an equation, we will
  apply Remark \ref{R:Srour}. Consider a tuple $a$ of
  length $|x|$, and two elementary substructures $M\subset N$ with
  $a\ind_M N$. Choose now a $K^p\cdot M$-basis $a_0,\dotsc,a_M$ of the
  differential monomials in $a$ which occur in $q(a,y)$ and write
  \[ q(a,y)=\sum\limits_{i=0}^N q_i(a'^p,m,y)\cdot a_i,\]
  for tuples $a'$ in $K$ and $m$ in $M$, and differential polynomials
  $q_i(x',y',y)$ with integer coefficients and linear in $x'$ and
  $y'$. Observe that we may assume that $a'\ind_{Ma} N$, which implies
  $aa'\ind_M N$.

  \noindent By Corollary \ref{C:coh_DCFP}, the elements
  $a_0,\dotsc,a_M$ remain linearly
  independent over $K^p\cdot N$. Thus, for all $b$ in $N$,
  \[ K\models \phi(a,b) \longleftrightarrow \psi'(a,a',m, b),\]
  where
  \[\psi'(x,x',y',y)\;\;=\;\;\exists z \Bigl( z^p\doteq
  q_0(x'^p,y',y) \land\; \Land\limits_{i=1}^N q_i(x'^p,y',y)\doteq 0
  \land\; \psi(x, y, z) \Bigr).\]
  By the previous claim, the $\delta$-tame formula $\psi'(x,x';y',y)$
  is an equation, so   \[ \tp(a,a'/M) \proves \tp_{\psi'}^+(a, a'/N).\]

\noindent   In order to show that
  \[ \tp(a/M) \proves \tp_{\phi}^+(a/N),\]
  consider a realisation $\tilde a$ of $\tp(a/M)$ and an instance
  $\phi(x,b)$ in $\tp_{\phi}^+(a/N)$.
  There is a tuple $\tilde a'$ such that $a a'\equiv_M \tilde a\tilde
  a'$. Since $K\models\psi'(a,a',m, b)$, we have $K\models \psi'(\tilde
  a,\tilde a',m,b)$.  Observe that there are $\tilde a_0,\dotsc,\tilde a_N$
  whith $q(\tilde a,y)=\sum\limits_{i=0}^N q_i(\tilde a'^p,m,y)\cdot
  \tilde a_i$, so we have
  in particular that
  \[ q(\tilde a, b) = q_0(\tilde a', m, b), \]
  whence $K\models \phi(\tilde a,b)$, as desired.

\end{proof}

Together with Proposition \ref{P:delta_BK}, we conclude the following result:

\begin{cor}\label{C:DCFPeq}
  The theory $\dcfp$ of existentially closed differential fields is
  equational.
\end{cor}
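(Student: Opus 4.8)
The plan is to prove Corollary~\ref{C:DCFPeq} by combining the two main structural results already established for $\dcfp$: the relative quantifier elimination of Proposition~\ref{P:delta_BK} and the equationality of the basic building blocks given by Theorem~\ref{T:eqDCFp}. The strategy exactly mirrors the one used for separably closed fields in Corollary~\ref{C:SCF_eq}, so the work is almost entirely bookkeeping rather than new mathematics.

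First I would invoke Proposition~\ref{P:delta_BK}, which asserts that modulo $\dcfp$ every formula is a Boolean combination of $\delta$-tame formulae. As in the statement of Corollary~\ref{C:SCF_eq}, one should be slightly careful that the Boolean combination may also involve sentences (formulae without free variables in the partition), but these are trivially equations, so they cause no difficulty. Thus it suffices to check that each $\delta$-tame formula $\phi(x;y)$ appearing in the combination is itself an equation in the given partition of the variables.

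Next I would apply Theorem~\ref{T:eqDCFp}, which states precisely that, for any partition of the variables, a $\delta$-tame formula $\phi(x;y)$ is an equation. Finally, I would recall from Section~\ref{S:fields} the general closure properties of equations: finite conjunctions, finite disjunctions, and the operations $\phi\inv$ and substitution $\phi(f(x);y)$ all preserve equationality, and an \emph{incomplete} theory is declared equational if every formula is, modulo the theory, a Boolean combination of equations. Since sentences and $\delta$-tame formulae are all equations, any Boolean combination of them is a Boolean combination of equations, which is exactly what equationality of $\dcfp$ requires.

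There is essentially no obstacle here, since all the substantive content has been discharged by Proposition~\ref{P:delta_BK} and Theorem~\ref{T:eqDCFp}; the only point demanding care is the same subtlety noted in the proof of Corollary~\ref{C:SCF_eq}, namely that the Boolean combination supplied by the relative quantifier elimination may include parameter-free sentences alongside the genuine $\delta$-tame formulae, and one must observe that sentences are equations by definition before concluding. With that remark in place the corollary follows immediately.
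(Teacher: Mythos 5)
Your proposal is correct and follows exactly the paper's route: the paper derives Corollary~\ref{C:DCFPeq} directly from Proposition~\ref{P:delta_BK} together with Theorem~\ref{T:eqDCFp}, just as in the proof of Corollary~\ref{C:SCF_eq}. Your additional remark about sentences in the Boolean combination being equations by definition is the same minor point the paper makes in the separably closed case, so nothing is missing.
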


Similar to Corollary \ref{C:EI_SCF}, there is a partial elimination of
imaginaries for $\dcfp$, by Lemma \ref{L:eqEI} and Theorem
\ref{T:eqDCFp}. Unfortunately, we do not have either an explicit description
of the canonical parameters of instances of $\delta$-tame formulae.

\begin{cor}\label{C:EI_DCF}
  The theory $\dcfp$ of differentially closed fields of positive
  characteristic $p$ has weak elimination of imaginaries, after adding
  canonical parameters for all instances of $\delta$-tame formulae.
\end{cor}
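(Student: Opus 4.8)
The plan is to obtain the statement as a direct application of Lemma~\ref{L:eqEI}, with $\mathcal F$ taken to be the collection of all $\delta$-tame formulae (across all partitions of the free variables into $x;y$). To invoke that lemma I must supply three ingredients: that $\mathcal F$ consists of equations and is closed under finite conjunctions, that every formula is a Boolean combination of instances of members of $\mathcal F$, and that every instance of a member of $\mathcal F$ has a real canonical parameter. The first half of the first ingredient is Theorem~\ref{T:eqDCFp}; the second ingredient is exactly Proposition~\ref{P:delta_BK}; and the third is what the expansion in the statement is designed to provide. So the only genuinely new verification is closure under finite conjunctions.

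For that closure, I would take two $\delta$-tame formulae, written as in Definition~\ref{D:delta-tame},
\[ \exists z_1\cdots\exists z_n\Bigl(\Land_{j=1}^n z_j^p\doteq q_j(x,z_1,\ldots,z_{j-1})\;\land\;\Sigma(x,z_1,\ldots,z_n)\Bigr) \]
and
\[ \exists w_1\cdots\exists w_m\Bigl(\Land_{k=1}^m w_k^p\doteq r_k(x,w_1,\ldots,w_{k-1})\;\land\;\Sigma'(x,w_1,\ldots,w_m)\Bigr), \]
relabel the quantified variables so that the two blocks are disjoint, and pull both $p$-th power conditions together with the system $\Sigma\land\Sigma'$ under the single block of existentials $z_1,\ldots,z_n,z_{n+1},\ldots,z_{n+m}$ (where $z_{n+k}$ renames $w_k$). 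The resulting formula has precisely the shape of Definition~\ref{D:delta-tame}: the polynomial attached to $z_{n+k}$ is $r_k$, which lies in $\Z\{X,T_1,\ldots,T_{n+k-1}\}$ because it mentions only $x$ and the second block, so the required nesting order of the $q_j$'s is respected. Hence the conjunction is again $\delta$-tame, and $\mathcal F$ is closed under finite conjunctions.

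With these ingredients in place, the remaining point is the canonical-parameter hypothesis of Lemma~\ref{L:eqEI}. Passing to the expansion named in the statement, namely adjoining for each $\delta$-tame $\phi(x;y)$ a canonical parameter for every instance $\phi(x;b)$, makes each such parameter named and therefore real; this is exactly the third hypothesis. Lemma~\ref{L:eqEI} then yields weak elimination of imaginaries for the expanded theory, which is the assertion.

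I do not expect a real obstacle, since the substance was already carried by Theorem~\ref{T:eqDCFp}. The one place deserving care is the interface with the proof of Lemma~\ref{L:eqEI}: there weak EI is reduced to producing a real canonical base for every global type $q$, and this base is assembled from the canonical parameters of the $\varphi$-definitions for $\varphi\in\mathcal F$. One must note that the formula $\psi(x)\in q$ cutting out $\bigcap\{\varphi(\mathbb U,b)\mid\varphi(x,b)\in q\}$ is a \emph{finite} intersection---because $\varphi$ is an equation---so that $\psi$ is an instance of a single member of $\mathcal F$, namely a finite conjunction of copies of $\varphi$. It is exactly here that closure under finite conjunctions is used, and the canonical parameter of that instance is among those we have adjoined.
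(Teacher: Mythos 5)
Your proposal is correct and follows exactly the route the paper takes: Corollary~\ref{C:EI_DCF} is obtained by applying Lemma~\ref{L:eqEI} to the family of $\delta$-tame formulae, with Theorem~\ref{T:eqDCFp} supplying equationality and Proposition~\ref{P:delta_BK} the Boolean decomposition. Your explicit verification that merging the two existential blocks preserves the shape of Definition~\ref{D:delta-tame} is a detail the paper leaves implicit, and it is carried out correctly.
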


\vspace{3em}


\subsection*{Digression: On Srour's proof of the  equationality of
  $\dcfp$}
\begin{definition}[Srour \cite{gS88}]\label{D:S-formula}
  An S-Formula $\phi$ is a conjunction of $\LL_{\delta,s}$-equations such
  that, for every subterm $s(r)$ of a term occurring in $\phi$, the
  equation $\delta(r)\doteq 0$ belongs to $\phi$.
\end{definition}

Srour's proof first shows that every formula is equivalent in $\dcfp$to a
Boolean combination of S-formulae. This follows from Proposition
\ref{P:delta_BK}, as the next Lemma shows.

\begin{lemma}\label{L:S_and_delta_equiv}
 Every tame $\delta$-formula is equivalent to an S-formula, and
 conversely.
\end{lemma}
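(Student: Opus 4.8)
The plan is to prove both directions of the equivalence by relating the syntactic shape of a $\delta$-tame formula to that of an S-formula. Recall that a $\delta$-tame formula has the form
\[
\phi(x)=\exists z_1\ldots\exists z_n\Bigl(\Land_{j=1}^n z_j^p\doteq q_j(x,z_1,\ldots,z_{j-1})\;\land\;\Sigma(x,z_1,\ldots,z_n)\Bigr),
\]
while an S-formula is a conjunction of $\LL_{\delta,s}$-equations in which every subterm $s(r)$ is accompanied by the conjunct $\delta(r)\doteq 0$. The essential observation is that the Skolem function $s$ extracts $p^\text{th}$-roots of constants: by Fact~\ref{F:Wood}, $s(r)=w$ holds with $\delta(r)\doteq 0$ exactly when $w^p\doteq r$. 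So the existentially quantified variables $z_j$ with their defining equations $z_j^p\doteq q_j$ in a $\delta$-tame formula play precisely the role of the terms $s(q_j)$ in an S-formula, provided the corresponding constancy conditions $\delta(q_j)\doteq 0$ hold.

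First I would prove the direction from $\delta$-tame to S-formula. Given $\phi$ as above, I claim it is equivalent in $\dcfp$ to the S-formula obtained by substituting $z_j\mapsto s(q_j(x,z_1,\ldots,z_{j-1}))$ — carried out recursively from $j=1$ upward, so that each substituted term is itself an $\LL_{\delta,s}$-term in $x$ alone — and then forming the conjunction of all the equations $\delta(q_j)\doteq 0$ (to witness the subterms $s(q_j)$, as required by Definition~\ref{D:S-formula}) together with the equations of $\Sigma$ after the same substitution. The point is that, over a model of $\dcfp$, the system $z_j^p\doteq q_j$ with the $z_j$ existentially quantified has a (unique) solution iff each $q_j$ is a constant with a $p^\text{th}$-root, and since models of $\dcfp$ are differentially perfect, $\delta(q_j)\doteq 0$ already guarantees the $p^\text{th}$-root exists and equals $s(q_j)$. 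Thus the existential quantifiers can be eliminated in favour of the function $s$, turning $\phi$ into the desired conjunction of $\LL_{\delta,s}$-equations with the required accompanying constancy equations.

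For the converse, starting from an S-formula I would read off the nesting depth of $s$ and introduce, for each distinct subterm $s(r)$ encountered (innermost first), a fresh existentially quantified variable $z$ together with the equation $z^p\doteq r$, replacing $s(r)$ by $z$ throughout; since the S-formula already contains $\delta(r)\doteq 0$ for each such subterm, the equivalence $z^p\doteq r \leftrightarrow z\doteq s(r)$ holds in $\dcfp$, so introducing these witnesses loses nothing. After all occurrences of $s$ are replaced we are left with a conjunction of ordinary differential-polynomial equations, namely a system $\Sigma$, quantified existentially by the new variables with their $z_j^p\doteq q_j$ defining equations; this is exactly the shape of a $\delta$-tame formula. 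The main obstacle I anticipate is purely bookkeeping rather than conceptual: one must order the introduced variables so that each $q_j$ lies in $\Z\{X,T_1,\ldots,T_{j-1}\}$ as Definition~\ref{D:delta-tame} demands, which forces an innermost-to-outermost processing of the nested $s$-terms, and one must verify that the constancy equations $\delta(q_j)\doteq 0$ are correctly tracked as the subterm conditions of Definition~\ref{D:S-formula} in the forward direction. Neither difficulty is deep, so the proof is essentially a careful translation between the two normal forms, with differential perfectness of models of $\dcfp$ doing the real work of making $s$ and the existential $p^\text{th}$-root quantifier interchangeable.
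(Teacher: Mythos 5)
Your proposal is correct and rests on exactly the same observation as the paper's (one-line) proof, namely the $\dcfp$-equivalence $\exists z\,(z^p\doteq q\land\psi(x,z))\leftrightarrow(\delta(q)\doteq 0\land\psi(x,s(q)))$, applied recursively to trade the existential $p^{\text{th}}$-root quantifiers for the function $s$ and back. The extra bookkeeping you describe (innermost-first processing, tracking the constancy conjuncts) is just the iteration of that single equivalence, which the paper leaves implicit.
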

\begin{proof}
  For every
  $\LL_{\delta,s}$-formula $\psi(x,z)$ and every polynomial
  $q(x)$, observe that
  \[\dcfp\;\proves\;\; \exists z\,\Bigl(z^p\doteq
  q(x)\land\psi\bigl(x,z\bigr)\Bigr)\;\longleftrightarrow\;\Bigl(\delta(q)\doteq
  0\land\psi\bigl(x,s(q(x))\bigr)\Bigr).\]
\end{proof}

In order to show that S-formulae $\phi(x;y)$ are equations, Srour uses
the fact that, whenever $A$ and $B$ are elementary submodels of a
model $K$ of $\dcfp$ which are linearly disjoint over their intersection $M$, then
for every $a\in A$ and any S-formula $\phi(x;y)$,
\[\tp(a/M)\proves\tp^+_\phi(a/B).\]
In order to do so, he observes that S-formulae are preserved under
differential ring homomorphisms, as well as a striking result of
Shelah (see the the proof of \cite[Theorem 9]{sS16}): the ring
generated by $A$ and $B$ is differentially perfect, that is, it is
closed under $s$. We would like to present a slightly simpler proof of
Shelah's result.

\begin{lemma}(Shelah's Lemma \cite[Theorem 9]{sS16})
  Let $M$ be a common differential subfield of the the two
  differential fields $A$ and $B$ and $R=A\otimes_MB$. If $M$ is
  existentially closed in $B$, then the ring of constants of $R$ is
  generated by $\C_A$ and $\C_B$.
\end{lemma}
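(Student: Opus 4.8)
The plan is to establish this result by studying the constants of the tensor product $R = A \otimes_M B$ via a careful basis decomposition, exploiting the existential closedness of $M$ in $B$ to descend solvability of differential equations. First I would fix an $M$-basis $(b_\iota)_{\iota}$ of $B$; since $R = A \otimes_M B$, every element $r \in R$ has a unique representation $r = \sum_\iota a_\iota \otimes b_\iota$ with $a_\iota \in A$ almost all zero, and the derivation on $R$ acts by $\delta(a \otimes b) = \delta(a) \otimes b + a \otimes \delta(b)$. The goal is to show that if $\delta(r) = 0$, then $r$ lies in the subring generated by $\C_A$ and $\C_B$, i.e.\ $r = \sum_k c_k \otimes d_k$ with $c_k \in \C_A$ and $d_k \in \C_B$.

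The key step is to choose the basis of $B$ cleverly so that the constancy condition $\delta(r)=0$ decouples the $A$-side from the $B$-side. I would aim to pick a basis adapted to the differential structure, for instance refining an $M$-basis of $B$ through an $M$-basis of $\C_B$ over $\C_M$ together with elements whose derivatives are controlled. Writing out $\delta(r) = \sum_\iota \bigl(\delta(a_\iota) \otimes b_\iota + a_\iota \otimes \delta(b_\iota)\bigr) = 0$ and re-expanding each $\delta(b_\iota)$ back in the chosen basis yields, by uniqueness of the $B$-coordinates, a system of linear differential equations over $A$ relating the $a_\iota$ and their derivatives, with coefficients coming from how $\delta$ acts on the basis of $B$. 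The essential use of existential closedness of $M$ in $B$ is to guarantee that the relevant coefficient data, which a priori live in $B$, can be matched by data in $M$: whenever a differential-algebraic condition over $M$ on the $B$-side has a solution witnessing a constant of $R$, existential closedness lets us realise the corresponding constraint already over $M$, forcing the offending $a_\iota$ to be genuine constants of $A$ and the accompanying $b_\iota$ to be constants of $B$.

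I expect the main obstacle to be precisely organising this decoupling: in positive characteristic one cannot freely divide or integrate, so the argument cannot simply ``solve the linear ODE''; instead one must argue combinatorially on the basis, isolating a maximal linearly independent configuration and using existential closedness to push a solvability statement about the $b_\iota$ down from $B$ into $M$. A clean way to structure this is to take a counterexample $r$ with the fewest nonzero coordinates $a_\iota$, then show that the constancy equation forces one of the $b_\iota$ to satisfy, over $M$, a differential equation that already has a solution in $M$ by existential closedness; subtracting off that $M$-rational solution reduces the number of terms, contradicting minimality. This reduction step, and verifying that the equation one feeds into the existential-closedness hypothesis is genuinely quantifier-free over $M$ in the right variables, is the delicate part. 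Once the decoupling is achieved, concluding that $r$ decomposes as $\sum_k c_k \otimes d_k$ with $c_k \in \C_A$, $d_k \in \C_B$ is routine bookkeeping with the tensor basis.
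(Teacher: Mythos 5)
Your overall plan (expand a constant of $R$ along a basis, extract a linear differential system with coefficients in $M$, and use existential closedness of $M$ in $B$ to descend solvability) is the right flavour, but the setup is oriented the wrong way, and this is not a cosmetic issue. You fix an $M$-basis $(b_\iota)$ of $B$ and write a constant as $r=\sum_\iota a_\iota\otimes b_\iota$ with unknown coefficients $a_\iota\in A$. Expanding $\delta(r)=0$ then yields, exactly as you say, a linear system $\delta(a_j)=-\sum_\iota m_{\iota,j}a_\iota$ with $m_{\iota,j}\in M$ constraining the \emph{$A$-side} coefficients. But the hypothesis is that $M$ is existentially closed in $B$, not in $A$, so it gives you no handle on the $a_\iota$; your key step (``existential closedness lets us realise the corresponding constraint already over $M$'') has nothing to apply to. The paper does the mirror image: it fixes a basis $(a_i)$ of $A$ over $M$, writes the constant as $\sum_i a_i b_i$ with unknowns $b_i\in B$, derives $\delta(b)=H\cdot b$ with $H$ over $M$, and then uses existential closedness of $M$ in $B$ (together with the Wronskian criterion, Fact \ref{F:Wronski}) to show that a maximal $M$-rational solution set of $\delta(y)=H\cdot y$ already spans all $B$-rational solutions over $\C_B$, so $b=U\cdot z$ with $U$ over $M$ and $\delta(z)=0$. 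Relatedly, your expectation that the argument ``forces the offending $a_\iota$ to be genuine constants of $A$ and the accompanying $b_\iota$ to be constants of $B$'' is false as stated: for $\delta(a)=a$ and $\delta(b)=-b$ the element $a\otimes b$ is a constant of $R$ with neither factor constant; it only decomposes into constants after regrouping through $M$.

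Two further ingredients are missing and are not ``routine bookkeeping.'' First, one needs $R$ to be an integral domain (indeed, that $A$ is existentially closed in $R$, which again comes from the hypothesis on $B$) in order to pass to the quotient field and apply the Wronskian fact at all. Second, even once the $b_i$ are known to lie in the ring generated by $M$ and $\C_B$, concluding that the constant lies in the ring generated by $\C_A$ and $\C_B$ requires the separate argument (the paper's Claim 2) that a basis of $A$ over $\C_A$ remains linearly independent over the constants of the quotient field of $R$ — another Wronskian application. Your proposed minimal-counterexample reduction also does not parse as written: the $b_\iota$ in your expansion are fixed basis elements of $B$ over $M$, so they do not ``satisfy a differential equation'' imposed by the constancy of $r$, and subtracting an $M$-rational solution from a basis element does not reduce the number of nonzero coordinates of $r$. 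To repair the proof, swap the roles of $A$ and $B$ in your decomposition and supply the two Wronskian steps.
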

\noindent In particular, in characteristic $p$, the ring $R$ is
 differentially perfect, whenever both $A$ and $B$ are.
\begin{proof}

  \begin{claim}
   The differential field $A$ is existentially closed in $R$. In
   particular $R$ is an integral domain.
  \end{claim}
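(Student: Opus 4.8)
The plan is to prove the existential closedness of $A$ in $R$ directly, by a linear-algebra translation that turns solvability in $R$ into solvability in $B$, at which point the hypothesis that $M$ is existentially closed in $B$ does the work; the integral-domain assertion will then drop out formally. First I would fix an $M$-basis $(e_i)_{i\in I}$ of the differential field $A$ with $e_0=1$. Since $B$ is a free $M$-module, the tensor product decomposes as a free $B$-module $R=\bigoplus_{i\in I}B\,e_i$, so that $a\mapsto a\otimes 1$ and $b\mapsto 1\otimes b$ are embeddings of differential rings. The key point is that the differential-ring structure of $R$ is governed by \emph{structure constants lying in $M$}: there are finitely many $\gamma^{i''}_{i i'}\in M$ with $e_i e_{i'}=\sum_{i''}\gamma^{i''}_{i i'} e_{i''}$ and finitely many $\beta^{i'}_{i}\in M$ with $\delta(e_i)=\sum_{i'}\beta^{i'}_{i} e_{i'}$, simply because $e_ie_{i'}$ and $\delta(e_i)$ already lie in $A=\bigoplus_i M e_i$.

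Next I would take a quantifier-free $\LL_\delta$-formula $\Sigma(x)$ with parameters $a$ from $A$ that is satisfied by some tuple $c$ in $R$; after passing to one disjunct we may assume $\Sigma$ is a conjunction of differential-polynomial equations and inequations. Writing each parameter as $\sum_i\alpha_i e_i$ with $\alpha_i\in M$ and each component of the solution as $c_l=\sum_i b_{li}e_i$ with $b_{li}\in B$, I would expand $\Sigma(a,c)$ in the basis $(e_i)$ using the structure constants above. Because the differential polynomials in $\Sigma$ have bounded order and degree, only finitely many $e_i$ are activated, and reading off the vanishing or non-vanishing of the $e_i$-components turns $\Sigma(a,c)$ into an existential $\LL_\delta$-formula $\tilde\Sigma$ in the unknowns $b_{li}$ with parameters only in $M$, which holds in $B$: each equation contributes a conjunction of equations over the components, while an inequation $F\neq 0$ contributes the positive disjunction $\bigvee_i (F)_i\neq 0$. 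Since $M$ is existentially closed in $B$, the formula $\tilde\Sigma$ is already satisfied by some $\tilde b_{li}\in M$. Setting $c'_l=\sum_i\tilde b_{li}e_i\in A$ and rerunning the same expansion inside $A$—where the structure constants are identical and $\delta$ maps $M$ into $M$—shows $A\models\Sigma(a,c')$. Hence $A$ is existentially closed in $R$.

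Finally, the integral-domain statement follows formally: the sentence $\exists x\,\exists y\,(xy\doteq 0\land x\neq 0\land y\neq 0)$ fails in the field $A$, so by existential closedness it fails in $R$, i.e.\ $R$ has no zero divisors. The step I expect to be most delicate is the faithful bookkeeping of this translation: one must verify that only finitely many basis vectors are reached by the bounded-order differential polynomials, that inequations really yield existential (positive) disjunctions rather than something outside the reach of existential closedness, and, above all, that substituting the $M$-values $\tilde b_{li}$ reproduces the $R$-computation verbatim inside $A$. This last point is exactly what forces the structure constants to lie in $M$ and uses that $M$ is a differential subfield, so that $\delta(\tilde b_{li})\in M$.
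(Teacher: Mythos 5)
Your proof is correct and follows essentially the same route as the paper: decompose $R$ as a free $B$-module over an $M$-basis of $A$, use the fact that the structure constants for multiplication and derivation lie in $M$ to reduce the quantifier-free condition to one over $M$ in the $B$-coordinates, pull the witnesses down to $M$ by existential closedness, and reassemble in $A$. Your write-up merely makes explicit the bookkeeping that the paper compresses into ``we may assume that $r=b$ and $a$ occurs linearly in $\rho$ and is an enumeration of a basis of $A$ over $M$.''
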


  \begin{claimproof}
    Suppose $R\models\rho(a,r)$, for some  quantifier-free
    $\delta$-formula $\rho(x,y)$, and tuples $a$ in $A$ and $r$ in
    $R$. Rewriting $\rho$, we may assume that $r=b$ and $a$ occurs
    linearly in $\rho$ and is an enumeration of a basis of $A$ over
    $M$. In particular, there is a quantifier-free formula $\rho'(y)$
    such that  for all $b\in B$
\[ R\models \forall y \Bigl
      (\rho(a,b)\longleftrightarrow \rho'(b) \Bigr).\]
\noindent Since $M$ is existentially closed in $B$, and the validity
of quantifier-free formulae is preserved under substructures, we
conclude that there is some $a'$ in $M$ satisfying $\rho'(y)$ and thus
$\rho(a,a')$ holds in $R$, and hence in $A$.

  \end{claimproof}

  \noindent Let $K$ be the quotient field of $R$.

  \begin{claim}
    \label{Cl:prep}
    The ring of constants of the ring $R'$ generated by $A$ and $\C_B$
    is generated by $\C_A$ and $\C_B$.
  \end{claim}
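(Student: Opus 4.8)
The claim asserts that the constants of $R'=A[\C_B]$ are exactly $\C_A[\C_B]$, the ring generated by $\C_A$ and $\C_B$. My plan is to pin down the differential-module structure of $R'$ over $A$ and then read off the constants from a basis, so that the whole argument reduces to a single linear-disjointness statement. First I would note that $\C_A\cap\C_B=\C_M$, since this intersection lies in $A\cap B=M$ (as $A\ld_M B$) and consists of constants. The field $\C_B$ is thus an extension of $\C_M$ lying inside $B$, every element of which is a constant of $R=A\otimes_M B$; by Leibniz' rule the induced derivation satisfies $\delta(a\,c)=\delta(a)\,c$ for $a\in A$ and $c\in\C_B$.

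The key step, and the one I expect to carry the weight of the proof, is to show that $A$ and $\C_B$ are linearly disjoint over $\C_M$, that is, that a $\C_M$-linearly independent tuple from $\C_B$ stays $A$-linearly independent. I would obtain this by transitivity of linear disjointness (as recorded in Section~\ref{S:fields}) through the intermediate field $M$: it suffices to check $\C_B\ld_{\C_M}M$ and $M\cdot\C_B\ld_M A$. The first holds because $M$ is a differential subfield, hence $\C_K$-special by Fact~\ref{F:Wronski}, giving $M\ld_{\C_M}\C_K$ and so $M\ld_{\C_M}\C_B$ by monotonicity; the second follows since $M\cdot\C_B\subseteq B$ and $A\ld_M B$, again by monotonicity. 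Combining the two, $A\ld_{\C_M}\C_B$, so a $\C_M$-basis $(e_j)_{j\in J}$ of $\C_B$ is simultaneously an $A$-module basis of $R'\cong A\otimes_{\C_M}\C_B$, and it consists of constants.

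With such a basis the conclusion is immediate. Writing an arbitrary element of $R'$ as a finite sum $r=\sum_j a_j e_j$ with $a_j\in A$, the computation above gives $\delta(r)=\sum_j\delta(a_j)e_j$, and since the $e_j$ are $A$-linearly independent this vanishes precisely when every $\delta(a_j)=0$, that is, when each $a_j\in\C_A$. Hence $r$ is a constant of $R'$ exactly when $r\in\C_A[\C_B]$, which is the assertion of the claim. Note that the hypothesis that $M$ is existentially closed in $B$ plays no role here: it was needed only in the previous claim to guarantee that $R$ is a domain, whereas Claim~\ref{Cl:prep} depends only on the tensor-product structure of $R$ and the good behaviour of constants under the derivation.
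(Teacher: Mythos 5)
Your argument is correct, but it inverts the paper's decomposition. The paper fixes a basis $(a_i)$ of $A$ over $\C_A$ with $a_0=1$, writes any $x\in R'$ as $\sum_i a_i c_i$ with the $c_i$ in the ring generated by $\C_A$ and $\C_B$ (hence constants of $K$), and applies Fact~\ref{F:Wronski} once to the $a_i$: being linearly independent over $\C_A$, they stay linearly independent over $\C_K$, so a constant $x$ must equal $c_0$. You instead expand over a $\C_M$-basis of $\C_B$ with coefficients in $A$, which obliges you first to prove $A\ld_{\C_M}\C_B$; your derivation of that by transitivity through $M$ (using that $M$ is $\C_K$-special, i.e.\ the Wronskian again, plus $A\ld_M B$ coming from the tensor-product structure) is sound. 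Both routes consume the same two ingredients, namely the Wronskian criterion and the embedding of $R$ into the differential field $K$; in particular your closing remark should be read with care, since without Claim 1 there is no quotient field $K$ in which to invoke Fact~\ref{F:Wronski} or to speak of linear disjointness, so the existential-closedness hypothesis is still used here, albeit only through Claim 1, exactly as you concede. The paper's version is shorter because applying the Wronskian directly to a $\C_A$-basis of $A$ dispenses with the extra disjointness step; yours buys the cleaner structural identification $R'\cong A\otimes_{\C_M}\C_B$ as a differential module, from which the description of the constants is read off coordinatewise.
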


  \begin{claimproof}
    Let $(a_i)$ be a basis of $A$ over $\C_A$, with $a_0=1$. Every
    $x$ in $R'$ can be written as $\sum_i a_i\cdot c_i$, for some $c_i$
    in the ring generated by $\C_A$ and $\C_B$. By Fact
    \ref{F:Wronski}, the $a_i$'s are independent over $\C_K$. If $x$
    is a constant in $R'\subset K$, then $x=c_0$.
  \end{claimproof}

  Fix now a basis $(a_i)_{i\in I}$ of $A$ over $M$ and let
  $x=\sum_{i\in I}a_i\cdot b_i$ be a constant in $R$.

  \begin{claim}
    All $\delta(b_j)$ are in the $M$-span of $\{b_i\mid i\in I\}$.
  \end{claim}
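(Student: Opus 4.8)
The plan is to exploit directly the hypothesis that $x=\sum_{i\in I}a_i\cdot b_i$ is a constant, that is $\delta(x)=0$, together with the freeness of $R$ as a $B$-module. First I would record the structural fact that, since $M$ is a field and $(a_i)_{i\in I}$ is an $M$-basis of $A$, the tensor product $R=A\otimes_M B$ is a free $B$-module with basis $(a_i)_{i\in I}$; in particular the $a_i$ remain linearly independent over $B$ inside $R$. This independence is the crucial ingredient, and I expect it to be the only point requiring genuine care. I would also note that the derivation on $R$ is $\delta=\delta_A\otimes\id+\id\otimes\,\delta_B$, so that Leibniz' rule applies term by term.

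Next I would differentiate and regroup. Leibniz' rule gives
\[\delta(x)=\sum_{i\in I}\bigl(\delta(a_i)\,b_i+a_i\,\delta(b_i)\bigr).\]
Since $\delta(a_i)\in A$, I write $\delta(a_i)=\sum_{j\in I}m_{ij}\,a_j$ with coefficients $m_{ij}$ in $M$ of finite support in $j$. Substituting this expression and regrouping the resulting sum by the basis elements $a_j$ yields
\[0=\delta(x)=\sum_{j\in I}a_j\Bigl(\delta(b_j)+\sum_{i\in I}m_{ij}\,b_i\Bigr).\]

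Finally, by the linear independence of the $a_j$ over $B$, every coefficient of this expansion must vanish, so that
\[\delta(b_j)=-\sum_{i\in I}m_{ij}\,b_i\]
lies in the $M$-span of $\{b_i\mid i\in I\}$, which is exactly the assertion of the claim. The main obstacle, as indicated, is justifying the $B$-linear independence of the $a_i$ inside $R$; once $R$ is identified as a free $B$-module on the basis $(a_i)_{i\in I}$, the remainder is a direct Leibniz computation followed by comparison of coefficients.
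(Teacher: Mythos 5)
Your proof is correct and follows essentially the same route as the paper: apply Leibniz' rule to $\delta(x)=0$, expand each $\delta(a_j)$ in the $M$-basis $(a_i)$, regroup by basis elements, and compare coefficients using the $B$-linear independence of the $a_i$ in $R=A\otimes_M B$. You are slightly more careful than the paper in making explicit that $R$ is free as a $B$-module on $(a_i)_{i\in I}$, which the paper's "whence" leaves implicit, but the argument is the same.
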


  \begin{claimproof}
    Write $\delta(a_j)=\sum_{i\in I}m_{j,i}a_i$ for $m_{j,i}\in M$.
    Since $0=\delta(x)$, we have
    \[0=\sum_ia_i\cdot\delta(b_i)+\sum_j(\sum_im_{j,i}a_i)\cdot b_j=
    \sum_ia_i\cdot\delta(b_i)+\sum_ia_i\cdot (\sum_jm_{i,j}b_j),\]
    whence $\delta(b_i)=-\sum_jm_{j,i}b_i$.
  \end{claimproof}

  \begin{claim}
    All $b_i$'s lie in the ring generated by $M$ and $\C_B$.
  \end{claim}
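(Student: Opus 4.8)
The plan is to exhibit each $b_i$ as a $\C_B$-linear combination of elements of $M$, so that it visibly lies in the ring generated by $M$ and $\C_B$. Only finitely many $b_i$ are nonzero, since $x$ is a finite sum; enumerate the relevant indices as $1,\dots,d$ and set $b=(b_1,\dots,b_d)\in B^d$. By the previous claim, $\delta(b)=C\,b$ for the matrix $C=(-m_{j,i})$ over $M$. I would then consider the $\C_B$-vector space
\[ S_B=\{y\in B^d \mid \delta(y)=C\,y\} \]
of solutions of this linear differential system, together with its subspace $S_M=S_B\cap M^d$ of solutions with entries in $M$. Since $b\in S_B$, it suffices to prove that $S_B=\C_B\cdot S_M$: writing $b=\sum_j c_j\,z_j$ with $c_j\in\C_B$ and $z_j\in M^d$ then expresses every $b_i$ as an element of the ring generated by $M$ and $\C_B$.

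The key input is the standard fact, in the spirit of Fact \ref{F:Wronski}, that solutions of a linear differential system are linearly dependent over $B$ if and only if they are linearly dependent over $\C_B$; applying $\delta$ to a shortest $B$-linear dependence and using $\delta(y)=C\,y$ produces a shorter dependence, forcing the coefficients to be constants. Consequently, if $k=\dim_{\C_B}S_B$ and $y_1,\dots,y_k$ is a $\C_B$-basis of $S_B$, then the $(d\times k)$-matrix with these columns has rank $k$ over $B$. Now the existence of $k$ vectors solving $\delta(y)=C\,y$ whose matrix has some nonvanishing $k\times k$ minor is an existential statement with parameters in $M$, since $C$ has entries in $M$; it holds in $B$, so by existential closedness of $M$ in $B$ it holds already in $M$. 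This produces $z_1,\dots,z_k\in M^d$ solving the system with a matrix of rank $k$, hence $\C_B$-linearly independent by the same fact. Thus $S_M$ contains $k$ many $\C_B$-independent vectors, whence $\C_B\cdot S_M=S_B$, as required, and $b$ is a $\C_B$-combination of the $z_j$.

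The main difficulty will be conceptual rather than computational: existential closedness of $M$ in $B$ \emph{restricts} the solutions available in $B$ to those already coming from $M$, so one cannot hope to trivialise the differential module $\sum_i M\,b_i$ by producing a fundamental solution matrix inside $B$. The argument must instead be arranged so that the relevant existential statement---the existence of a maximal independent family of solutions of a system over $M$---is witnessed \emph{in} $B$ and then transported down to $M$; the coincidence of $B$-independence and $\C_B$-independence for solutions is precisely what makes ``maximal independent family'' expressible by an existential formula over $M$, and hence what allows existential closedness to be applied.
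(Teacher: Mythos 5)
Your argument is correct and follows essentially the same route as the paper's: both transfer the existential statement ``there exist so many linearly independent solutions of the system $\delta(y)=H\cdot y$ over $M$'' from $B$ down to $M$ via existential closedness, and then express $b$ as a $\C_B$-linear combination of solutions lying in $M^d$. The only cosmetic difference is that the paper verifies constancy of the coefficients by differentiating the relation $b=U\cdot z$ directly, whereas you invoke the Wronskian-type equivalence of $B$-independence and $\C_B$-independence of solutions to compute $\dim_{\C_B}S_B$ up front; both steps are sound.
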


  \begin{claimproof}
    Let $b\in B^n$ be the column vector of the non-zero elements of
    $\{b_i\mid i\in I\}$. By the last claim, there is an $n\times
    n$-matrix $H$ with coefficients in $M$ with $\delta(b)=H\cdot b$. Let
    $u^1,\ldots, u^m$ be a maximal linearly independent system of
    solutions of $\delta(y)=H\cdot y$ in $M^n$. Consider the $n\times
    m$-matrix $U$ with columns $u^1,\ldots, u^m$. Since $M$ is
    existentially closed in $B$, the column vectors $u^1,\ldots u^m,
    b$ must be linearly dependent, so $b=U\cdot z$ for some vector
    $z$ in $B^m$. It
    follows that
    \[H\cdot b=\delta(b)=\delta(U)\cdot z+U\cdot \delta(z)=H\cdot U\cdot
      z+U\cdot \delta(z)=H\cdot b+ U\cdot \delta(z),\] whence $\delta(z)=0$.
  \end{claimproof}

  In particular, the constant $x$ lies in the ring $R'$ from Claim
  \ref{Cl:prep}, so  $x$ is in the ring generated by $\C_A$ and $\C_B$.
\end{proof}


\vspace{3em}


\subsection*{Interlude: An alternative proof of the equationality of
  $\scfpi$}

As a by-product of Theorem \ref{T:eqDCFp}, we obtain a different proof of the
equationality of $\scfpi$: We will show that every $\lambda$-tame
formula is an equation, since it is equivalent in a particular
model of $\scfpi$, namely a differentially closed field of
characteristic $p$, to a $\delta$-tame formula. A similar method will
appear again in Corollary \ref{C:eq_0}.

\begin{prop}\label{P:equiv_lambdadelta}
  Every $\lambda$-tame formula is equivalent in $\dcfp$ to a
  $\delta$-tame formula.
\end{prop}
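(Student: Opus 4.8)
The plan is to compare the two definitions term by term and show that the $\lambda$-functions of $\scfp$ and the differential-algebraic data encoded by $\delta$-tame formulae describe the same sets over a model of $\dcfp$. The crucial observation is that in a model of $\dcfp$ the constant field $\C_K$ coincides with $K^p$ (differential perfectness), so the predicate $\pdep_n$, which speaks about $K^p$-linear dependence, can be re-expressed in terms of the derivation. Concretely, $K^p$-linear dependence of elements $a_1,\dots,a_n$ is, by Fact~\ref{F:Wronski}, exactly the vanishing of their Wronskian $\W(a_1,\dots,a_n)$, which is a differential-polynomial condition; and the statement $\ldef{q_0,\dots,q_n}$ asserting that the $\lambda$-values are defined translates into $\neg\pdep_n$ together with $\pdep_{n+1}$, again a Boolean combination of Wronskian equations.

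First I would set up the translation on a single $\lambda$-function. The value $\zeta_i=\lambda_n^i(q_0,\dots,q_n)$ is the unique solution of $q_0=\sum_{i=1}^n \zeta_i^p q_i$ when the $q_1,\dots,q_n$ are $K^p$-independent. Since $K^p=\C_K$, each $\zeta_i^p$ is a constant, and the existence of such a representation together with uniqueness can be captured by an existential quantifier over a variable $z_i$ with $z_i^p\doteq$ (a polynomial in the data). This is precisely the shape of the existential blocks $\exists z\,(z^p\doteq q(x,y)\wedge\cdots)$ appearing in Definition~\ref{D:delta-tame}. Thus a single defined $\lambda$-value becomes one existential quantifier of $\delta$-tame type, and I would extract the solution $\zeta_i$ via Cramer's rule applied to the Wronskian system, so that the coefficients land in differential polynomials over $\Z$.

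The proof then proceeds by induction on the degree $D$ of the $\lambda$-tame formula, mirroring the structure already used in Proposition~\ref{P:lambda_BK} and Theorem~\ref{T:eq_SCF}. For $D=0$ a polynomial equation is already a (trivial) system of differential equations, hence $\delta$-tame. For the inductive step, given
\[
\phi(x)=\pdep_n(q_1,\dots,q_n)\;\lor\;\bigl(\ldef{q_0,\dots,q_n}\wedge\psi(x,\overline\lambda_n(q_0,\dots,q_n))\bigr),
\]
I would apply the inductive hypothesis to the $\lambda$-tame formula $\psi$ of degree $D-1$ to obtain an equivalent $\delta$-tame $\psi^\star$, then rewrite the disjunct: the condition $\pdep_n(q_1,\dots,q_n)$ becomes the vanishing of the relevant Wronskian determinants, while the defined branch introduces existential variables $z_1,\dots,z_n$ with $z_j^p\doteq(\text{Cramer numerator}_j)$ feeding into $\psi^\star$. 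Collecting the disjunction back into the prenex existential form of a $\delta$-tame formula uses that $\delta$-tame formulae are closed under the operations at hand, together with the homogenisation available from Proposition~\ref{P:homo_dcf} to clear the denominators produced by Cramer's rule.

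The main obstacle I expect is bookkeeping rather than conceptual. Two points need care: first, the passage from $\lambda_n^i$ (which lives over $K^p$) to the differential description relies essentially on $\C_K=K^p$, which holds only in models of $\dcfp$ and is exactly why the statement is phrased over $\dcfp$ rather than over arbitrary $\scfp$; I must invoke differential perfectness at each step. Second, solving the linear system $q_0=\sum\zeta_i^p q_i$ by Cramer's rule introduces division by the Wronskian determinant, so the resulting ``polynomial'' is a rational function; clearing these denominators while staying inside the $\delta$-tame format — and correctly handling the degenerate case where the determinant vanishes, folding it into the $\pdep_n$ disjunct — is the delicate part, and is precisely where I would lean on the homogenisation lemma to absorb the powers of the determinant.
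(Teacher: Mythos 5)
Your proposal follows essentially the same route as the paper's proof: induction on the degree of the $\lambda$-tame formula, translation of $\pdep_n$ into Wronskian conditions via differential perfectness ($\C_K=K^p$), extraction of the $\lambda$-values by Cramer's rule (the adjoint $B(x)$ of the Wronskian matrix applied to the column of derivatives of $q_0$), and an appeal to Proposition \ref{P:homo_dcf} to absorb the powers of $\W(x)$. The paper implements the denominator-clearing you flag as the delicate step by setting $z=\W(x)\cdot\overline\lambda$, so that the existential block reads $z^p\doteq\W(x)^{p-1}B(x)D(x)$ feeding into the homogenised $\psi'_\delta(x,\W(x),z)$; this is exactly what you describe, and there is no gap.
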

\begin{proof}
  Work inside a model $(K, \delta)$ of $\dcfp$.
  The proof goes by induction on the degree of the $\lambda$-tame formula
  $\phi(x)$. If $\phi$ is a polynomial equation, there is nothing to
  prove. Since the result follows for conjunctions, we need only
  consider the particular
  case when $\phi$ is of the form:
  \[\phi(x)=\pdep_n(q_1,\dotsc, q_n)\;\lor\;
  (\ldef{q_0,\dotsc,q_n}\land\, \psi(x, \overline{\lambda}(q_0,
  \ldots, q_n))),\] for some $\lambda$-tame formula $\psi(x,
  z_1,\ldots,z_n)$ of strictly smaller degree and polynomials $q_0, \ldots, q_n$ in
  $\Z[x]$.

 \noindent Let $\W(x)=\W(q_1,\ldots,q_n)$ be the Wronskian of $q_1,\ldots,q_n$,
 that is, the determinant of the matrix
 \[
 A(x)=\begin{pmatrix}
 q_1 & q_2 & \ldots & q_n \\
 \delta(q_1) &  \delta(q_2) & \ldots &  \delta(q_n) \\
 \vdots & & \vdots & \\
  \delta^{n-1}(q_1) &  \delta^{n-1}(q_2) & \ldots &
  \delta^{n-1}(q_n)
 \end{pmatrix}.\]

 \noindent  and $B(x)$ be the adjoint matrix of  $A(x)$. Set
 \[ D(x)= \begin{pmatrix} q_0 \\ \delta(q_0)\\ \vdots\\
   \delta^{n-1}(q_0) \end{pmatrix}.\]

\noindent Since $K$ is differentially perfect, the elements
$q_1(x),\ldots,q_n(x)$ are linearly independent over $K^p$ if and only
if $\W(x)\neq 0$. In that case, the functions
$\overline{\lambda}(q_0,\dotsc,q_n)$ are defined if and only if every
coordinate of the vector ${\W(x)}\inv\!\cdot B(x)\cdot D(x)$ is a
constant, in which case we have
 \[ \overline\lambda(q_0,\dotsc,q_n)^p={\W(x)}\inv\!\cdot B(x)\cdot D(x),\]
 or equivalently,
 \[ (\W(x)\cdot \overline\lambda(q_0,\dotsc,q_n))^p=
   \W(x)^{p-1}\!\cdot B(x)\cdot D(x) \]

 \noindent By induction, the formula $\psi(x,z_1,\ldots, z_n)$ is
 equivalent to a $\delta$-tame formula $\psi_\delta$. Homogenising
 with respecto to $z_0, z_1,\dotsc, z_n$, as in Proposition
 \ref{P:homo_dcf}, there is a $\delta$-tame formula
 $\psi'_\delta(x, z_0,z_1,\ldots, z_n)$ equivalent
 to
 \[\psi_\delta(x,\frac{z_1}{z_0},\ldots, \frac{z_n}{z_0} )\lor
   z_0\doteq 0\] Therefore, if $z=(z_1,\cdots,z_n)$, then
 \[K\models \Bigl(\phi(x) \longleftrightarrow \exists z\;
 \bigl(z^p\doteq \W(x)^{p-1}\cdot B(x)\cdot
 D(x)\;\land\;\psi'_\delta(x,\W(x),z)\bigr) \Bigr).\]
 The right-hand side is a $\delta$-tame formula, as desired.
\end{proof}

By Propositions \ref{P:lambda_BK} and \ref{P:equiv_lambdadelta}, and Theorem
\ref{T:eqDCFp}, we obtain a different proof of Corollary \ref{C:SCF_eq}:

\begin{cor}
The theory $\scfpi$ of separably closed fields of characteristic
  $p>0$ and infinite imperfection degree is equational.
\end{cor}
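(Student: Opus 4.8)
The plan is to obtain equationality for free from the two preceding results, by transferring the equation status of a formula across a reduct rather than re-running the indiscernible-sequence analysis of Theorem~\ref{T:eq_SCF}. By Proposition~\ref{P:lambda_BK} every formula is, modulo $\scfp$ and hence modulo its completion $\scfpi$, a Boolean combination of $\lambda$-tame formulae, so it suffices to prove that, in each partition of the variables, a $\lambda$-tame formula $\phi(x;y)$ is an equation in $\scfpi$. The idea I would exploit is that a single sufficiently saturated structure can serve both theories at once: a model $(K,\delta)$ of $\dcfp$ is a separably closed field of infinite imperfection degree (as observed after Fact~\ref{F:Wood}), so its $\LL_\lambda$-reduct is a model of the complete theory $\scfpi$, and reducts of sufficiently saturated structures are again sufficiently saturated. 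Since the $\lambda$-functions are $\LL_{rings}$-definable, nothing is lost in passing between the two languages on the same underlying field.

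First I would fix such a model $(K,\delta)$ together with a $\lambda$-tame formula $\phi(x;y)$. By Proposition~\ref{P:equiv_lambdadelta} it is equivalent in $\dcfp$ to a $\delta$-tame formula $\phi_\delta(x;y)$, and by Theorem~\ref{T:eqDCFp} that formula is an equation. The crucial point I would stress is that being an equation is a statement about the family of finite intersections of the instances $\phi(K,b)$, and for a fixed parameter tuple $b$ the set $\phi(K,b)$ is a genuine subset of $K^{|x|}$ that does not depend on whether we remember the derivation: since $\phi$ and $\phi_\delta$ are equivalent in $(K,\delta)$ and $\phi$ is an $\LL_\lambda$-formula, we have $\phi(K,b)=\phi_\delta(K,b)$. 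Hence the family of finite intersections of instances of $\phi$ coincides with that of $\phi_\delta$, and the descending chain condition enjoyed by the latter is inherited verbatim by the former. As this is computed in a common sufficiently saturated model, $\phi$ is an equation in $\scfpi$ in the given partition, and combining this with Proposition~\ref{P:lambda_BK} yields that every formula is, modulo $\scfpi$, a Boolean combination of equations, which is precisely equationality.

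The step I expect to require the most care is exactly this transfer across the reduct. A priori the equation property is sensitive to the ambient theory, because it can be destroyed by an indiscernible sequence present in one model but absent in another, and an $\LL_\lambda$-indiscernible sequence in the reduct need not be $\LL_{\delta,s}$-indiscernible in $(K,\delta)$. Phrasing equationality through the descending chain condition is what sidesteps this difficulty: the instances are literally the same definable sets in both structures, so the chain condition cannot hold for $\phi_\delta$ while failing for $\phi$. The only point one must still check is that the common model is saturated enough to read off the descending chain condition faithfully, which holds because reducts of sufficiently saturated models remain so and $\scfpi$ is complete.
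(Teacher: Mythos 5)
Your argument is correct and is essentially the paper's own proof of this corollary: the paper likewise combines Proposition~\ref{P:lambda_BK}, Proposition~\ref{P:equiv_lambdadelta} and Theorem~\ref{T:eqDCFp}, working in a differentially closed field of characteristic $p$ viewed simultaneously as a saturated model of $\scfpi$. Your additional remarks on why equationality transfers across the reduct (the instances are literally the same sets, so the DCC formulation is insensitive to the language) correctly fill in the detail the paper leaves implicit.
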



\section{Model Theory of Pairs}\label{S:MTPairs}

The last theory of fields we will consider in this work is the
(incomplete) theory $\acfp$ of proper pairs of algebraically closed
fields. Most of the results mentioned here appear in \cite{jK64, Po83,
  BPV03}.

Work inside a sufficiently saturated model $(K,E)$ of $\acfp$ in the
language $\LL_P=\LL_{rings}\cup\{P\}$, where $E=P(K)$ is the
 proper subfield. We will use the index $P$ to refer to the
expansion $\acfp$.

\noindent A subfield $A$ of $K$ is \emph{tame} if $A$ is algebraically
independent from $E$ over $E_A=E\cap A$, that is, \[
  A\ind^\mathsf{ACF}_{E_A} E.\]
\noindent Tameness was called \emph{$P$-independence} in
\cite{BPV03}, but in order to avoid a possible confusion, we have
decided to use a different terminology.

\begin{fact}\label{F:Kiesler}
  The completions of the theory $\acfp$ of proper pairs of algebraically
  closed fields are obtained once the characteristic is fixed. Each of
  these completions is $\omega$-stable of Morley rank $\omega$. The
  $\LL_P$-type of a tame subfield of $K$ is uniquely determined by its
  $\LL_P$-quantifier-free type.

\noindent Every subfield of $E$ is automatically tame, so the
induced structure on $E$ agrees with the field structure. The subfield
$E$ is a pure algebraically closed field and has Morley rank $1$.

\noindent If $A$ is a tame subfield, then its $\LL_P$-definable closure
coincides with the inseparable closure of  $A$ and its
$\LL_P$-algebraic closure is the field algebraic closure $\acl(A)$ of
$A$, and $E_{\acl_P(A)}=\acl(E_A)$.
\end{fact}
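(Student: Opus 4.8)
The plan is to build everything on a single back-and-forth lemma that yields the type-determination assertion (the third claim), from which completeness, the closure computations, and the rank statements all flow; throughout I would use that these pairs are exactly the belles/lovely paires of $\mathsf{ACF}_p$ in the sense of \cite{Po83, BPV03}. Fix the characteristic and work in a sufficiently saturated $(K,E)\models\acfp$. The partial isomorphisms I take are field isomorphisms $f\colon A_1\to A_2$ between \emph{tame} subfields with $f(E_{A_1})=E_{A_2}$ and $f$ preserving membership in $P$; the assertion is that this family has the back-and-forth property between $\aleph_1$-saturated pairs. Granting this, any two tame subfields with the same quantifier-free $\LL_P$-type are conjugate by an $\LL_P$-automorphism, which is precisely the third claim. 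Taking $A_i$ to be the inseparable closure of the prime field (which lies in $E$, so its trace on $E$ is itself) shows that the quantifier-free $\LL_P$-type of the prime structure, hence the complete theory, depends only on $\car$; a back-and-forth between saturated models of the same characteristic then gives completeness, so the completions are parametrised by the characteristic.

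The heart of the argument is the extension step, which is also where I expect the main difficulty. Given $f\colon A_1\to A_2$ tame and small and an element $c\in K_1$, I split into the case where $c$ is field-algebraic over $A_1\cdot E_1$ and the case where $c$ is field-generic over $A_1\cdot E_1$. In the generic case one adjoins a matching field-generic $c'$ over $A_2\cdot E_2$ using $\aleph_1$-saturation, and checks that the enlarged subfield stays tame; this is automatic because a generic element is algebraically independent from $E$, and the amalgamation of the $E$-parts is governed by linear disjointness over $E_{A_i}$ (the content of Lemma~\ref{L:coh} once stability is available, and by honest algebraic independence over $\acl(E_{A_i})$ in any case). The algebraic case is more delicate: one must track which elements of $E_1$ enter the minimal polynomial of $c$ and arrange, through the freedom in choosing $c'$, that exactly the same polynomial combinations of $c$ over $A_i$ land in $E$ on both sides. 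Reducing first to tame subfields $A_i$ whose trace $E_{A_i}$ is algebraically closed lets me replace linear disjointness by honest algebraic independence and makes this bookkeeping manageable; this is the step I expect to cost the most.

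From the type-determination lemma the closure statements are read off by exhibiting automorphisms. For tame $A$, an element lies in $\mathrm{dcl}_P(A)$ iff it is fixed by every $\LL_P$-automorphism fixing $A$ pointwise; as these are field automorphisms preserving $P$, only purely inseparable elements over $A$ are fixed, so $\mathrm{dcl}_P(A)$ is the inseparable closure of $A$, while finiteness of an orbit characterises field-algebraicity, giving $\acl_P(A)=\acl(A)$ and hence $E_{\acl_P(A)}=E\cap\acl(A)=\acl(E_A)$, the last equality coming from tameness. For the induced structure on $E$: every subfield $A\subseteq E$ is tame, since then $E_A=A$ and $A\ind^{\mathsf{ACF}}_A E$ trivially; applying type-determination to tuples from $E$ (where $P$ holds everywhere and contributes no quantifier-free content) shows that every $\LL_P$-definable subset of $E^n$ is already $\mathsf{ACF}$-definable, so $E$ is a pure algebraically closed field, of Morley rank $1$.

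Finally, for $\omega$-stability and the rank: over a countable tame model $M$ the quantifier-free $\LL_P$-type of a single element $c$ is determined by its field type over $M$ together with the pattern recording which polynomials in $c$ over $M$ fall into $E$; counting shows there are only countably many such patterns, so by type-determination the theory is $\omega$-stable. For Morley rank $\omega$ I would exhibit, for each $n$, a $1$-type of Morley rank $n$ — an element sitting at ``$E$-distance $n$'' from the base, whose rank is computed inductively from the rank $1$ of the generic of $E$ — and show that the generic type of $K$ over a model dominates all of them, forcing its rank to be exactly $\omega$. Verifying that this supremum is attained as $\omega$ and not exceeded is the other point where I expect to need genuine care.
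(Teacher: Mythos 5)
The paper offers no proof of this statement: it is imported as a \emph{Fact} with citations to \cite{jK64}, \cite{Po83} and \cite{BPV03}, so there is nothing internal to compare your argument against. Your plan --- a back-and-forth between saturated pairs whose partial isomorphisms are isomorphisms of tame subfields respecting the trace on $P$, from which completeness, the closure computations, the purity of $E$ and $\omega$-stability are all read off --- is essentially the classical proof from those sources (Keisler, Poizat, Delon, Ben-Yaacov--Pillay--Vassiliev), so in outline it is the right route. One organisational remark on the extension step, which you correctly identify as the crux: the standard way to tame the ``algebraic case'' is not to match minimal polynomials directly but to first enlarge the trace, i.e.\ extend $E_{A_1}$ to $E_1\cap\acl(A_1c)$ and find a matching extension of $E_{A_2}$ inside $E_2$ (possible since $E_2$ is algebraically closed and, by tameness and saturation, of infinite transcendence degree over anything algebraic over $A_2E_{A_2}$); after that $c$ is adjoined over a tame base over which its quantifier-free $\LL_P$-type is determined by its field type alone.

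Two points in your sketch are genuinely incomplete rather than merely compressed. First, for $\omega$-stability you assert that ``counting shows there are only countably many patterns'' of polynomials over a countable model $M$ landing in $E$; a priori there are continuum many such patterns, and the actual content is that tameness of $M$ forces $E\cap\acl(Mc)$ to have transcendence degree over $E_M$ at most $\operatorname{trdeg}(c/M)$ (elements of $E$ independent over $E_M$ stay independent over $M$), so the whole pattern is coded by the field type of $c$ together with a finite tuple from $E$ over $M$ --- that is the lemma you need to state and prove. Second, the Morley rank computation is too vague to check: ``$E$-distance $n$'' should be replaced by an explicit family of definable sets of rank $n$ inside the home sort, e.g.\ $\{\,e_1t_1+\dotsb+e_nt_n \mid e_i\in E\,\}$ for fixed $t_1,\dotsc,t_n$ linearly independent over $E$, which is in definable bijection with $E^n$ and hence has rank $n$; the upper bound $\mathrm{RM}(x\doteq x)\leq\omega$ then comes from the same type-counting analysis, not from a separate ``domination'' argument. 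With those two items supplied, the proposal would constitute a correct proof of the Fact.
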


Based on the above fact, Delon \cite{fD12} considered the following
expansion of the language $\LL_P$:

\[\LLD=\LL_P\cup\{\dep_n,
\lambda_n^i\}_{1\leq i \leq n\in \N},\] where the relation $\dep_n$ is
defined as follows:
\[K\models \dep_n(a_1,\ldots,a_n) \Longleftrightarrow a_1,\ldots,a_n
\text{ are $E$-linearly independent},\]
and the $\lambda$-functions take values in $E$ and are defined by the
equation
\[ a_0 = \sum\limits_{i=1}^n \lambda_n^i(a_0, a_1\ldots,a_n) \, a_i,\]
if
$K\models \dep_n(a_1,\ldots,a_n) \land \neg
\dep_{n+1}(a_0,a_1,\ldots,a_n)$, and are $0$ otherwise. Clearly, a
field $A$ is closed under the $\lambda$-functions if and only if it is
linearly disjoint from $E$ over $E_A$, that is, if it is $P$-special,
as in Definition \ref{D:spec}. Note that the fraction field of an
$\LLD$-substructure is again closed under $\lambda$-functions and thus
it is tame. The theory $\acfp$ has therefore quantifier elimination
\cite{fD12} in the language $\LLD$. Note that the formula $P(x)$ is
equivalent to $\dep_2(1,x)$. Likewise, the predicate $\dep_n$ is is
equivalent to $\lambda_n^1(a_1, a_1\ldots,a_n)=1$.

Since the definable closure of a set is $P$-special, we conclude
the following result by Lemma \ref{L:coh}.

\begin{cor}\label{C:coh_pair}
  Given two subfields $A$ and $B$ of $K$ containing an
  $\LL_p$-elementary substructure $M$ of $K$ such that $A\ind^P_M B$,
  then the fields $E \cdot A$ and $E\cdot B$ are linearly disjoint
  over $E\cdot M$.
\end{cor}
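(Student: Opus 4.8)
The plan is to deduce Corollary~\ref{C:coh_pair} directly from Lemma~\ref{L:coh}, by verifying that the theory $\acfp$ in the language $\LL_P$, together with the definable subfield $E = P(K)$, satisfies the hypotheses of that lemma. First I would observe that $\acfp$ is stable (indeed $\omega$-stable, by Fact~\ref{F:Kiesler}), so the notion of non-forking independence $\ind^P$ is available and the lemma applies. The subfield $E = P(K)$ is $\LL_P$-definable, so we are in the setting of Definition~\ref{D:spec} with $\Pred(K) = E$.

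The crucial point to check is that the field $A$ is $P$-special in the sense of Definition~\ref{D:spec}, that is, $A \ld_{\Pred(A)} \Pred(K)$, where $\Pred(A) = E \cap A$. By the discussion preceding the corollary, a field is closed under the $\lambda$-functions precisely when it is linearly disjoint from $E$ over $E_A = E \cap A$, which is exactly the condition that it be $P$-special. Since the hint instructs us to use that the definable closure of a set is $P$-special, the natural move is to replace $A$ by its $\LL_P$-definable closure (or, what suffices, the fraction field of the $\LLD$-substructure it generates), which is closed under the $\lambda$-functions and hence $P$-special. Passing to this closure does not disturb the independence $A \ind^P_M B$ by monotonicity and finite character, and it enlarges $E\cdot A$ only within the field it already generates, so the conclusion for the closure transfers back to $A$ itself.

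With $A$ (or its $P$-special closure) verified to be $P$-special and with $A \ind^P_M B$ over the elementary substructure $M$, I would then invoke Lemma~\ref{L:coh} verbatim: it yields that $\Pred(K)\cdot A = E \cdot A$ and $\Pred(K)\cdot B = E \cdot B$ are linearly disjoint over $\Pred(K)\cdot M = E \cdot M$, which is precisely the desired statement. This mirrors the structure of the proofs of Corollary~\ref{C:coh_SCFinfty} and Corollary~\ref{C:coh_DCFP}, where the same reduction to a $\Pred$-special closure is performed before appealing to Lemma~\ref{L:coh}.

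The main obstacle, such as it is, lies in the bookkeeping around the closure operation: one must confirm that replacing $A$ by its $\LL_P$-definable (or $\lambda$-)closure $A'$ both preserves $A' \ind^P_M B$ and yields $E\cdot A' = E\cdot A$ up to the linear disjointness we care about. By Fact~\ref{F:Kiesler}, the $\LL_P$-definable closure of a tame field is its inseparable closure, which lies in the perfect closure and hence contributes no new elements linearly independent over $E$ beyond those already generated; thus the reduction is harmless. Everything else is a direct citation of Lemma~\ref{L:coh}, so the argument is genuinely short.
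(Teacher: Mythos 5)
Your proposal is correct and follows exactly the route the paper intends: the paper's entire justification is the one-line remark preceding the corollary ("Since the definable closure of a set is $P$-special, we conclude the following result by Lemma \ref{L:coh}"), and your verification that the $\lambda$-closure of $A$ is $P$-special, that independence is preserved, and that linear disjointness passes down to the subfield $E\cdot A$ is precisely the bookkeeping carried out explicitly in the analogous Corollaries \ref{C:coh_SCFinfty} and \ref{C:coh_DCFP}. Nothing to add.
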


Our candidates for the equations in the theory $\acfp$ will be called
\emph{tame} formulae.

\begin{definition}\label{D:tame_formel}
  Let $x$ be a tuple of variables. A formula $\phi(x)$ in the
  language $\LL_P$ is \emph{tame} if there are polynomials
  $q_1,\ldots, q_m$ in $\Z[X,Z]$, homogeneous in the variables $Z$,
  such that
  \[
  \phi(x )\;\;=\;\;\exists\, \zeta \in P^r \biggl(\neg\zeta\doteq 0\;
  \land\; \Land_{j\leq m} q_j(x,\zeta)\doteq 0 \biggr).\]
\end{definition}

\begin{lemma}\label{L:segre}
  Let $q_1,\ldots, q_m\in\Z[X,Y,Z]$ be polynomials, homogeneous in the
  variables $Y$ and $Z$ separately. The $\LL_P$-formula
  \[\exists\, \upsilon \in P^r\; \exists \zeta \in
  P^s\Bigl(\neg\upsilon\doteq 0\; \land\; \neg\zeta\doteq
  0\;\land\;\Land_{k\leq m} q_k(x,\upsilon,\zeta)\doteq 0 \Bigr)\]
  is equivalent in $\acfp$ to a tame formula.
\end{lemma}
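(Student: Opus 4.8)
The plan is to collapse the two projective existential quantifiers into one by means of the classical Segre embedding $\Proj^{r-1}\times\Proj^{s-1}\hookrightarrow\Proj^{rs-1}$, which sends a pair $([\upsilon],[\zeta])$ to the point with homogeneous coordinates $(\upsilon_i\zeta_j)_{i,j}$. First I would introduce $rs$ new variables $W=(W_{ij})_{1\le i\le r,\,1\le j\le s}$ ranging over $P$. A nonzero matrix $W$ is of the form $(\upsilon_i\zeta_j)$ for some tuples $\upsilon,\zeta$ exactly when it has rank $1$, that is, when all its $2\times2$-minors $W_{ij}W_{i'j'}-W_{ij'}W_{i'j}$ vanish; these are homogeneous of degree $2$ in $W$ with integer coefficients. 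Since $P(K)=E$ is a field, the rank-one factorisation can be performed inside $E$: choosing a nonzero entry $W_{i_0j_0}$ and setting $\upsilon_i=W_{ij_0}$, $\zeta_j=W_{i_0j}/W_{i_0j_0}$ gives $\upsilon_i\zeta_j=W_{ij}$, with $\upsilon,\zeta$ nonzero tuples over $E$.

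It then remains to rewrite each equation $q_k(x,\upsilon,\zeta)\doteq0$ in terms of $W$ alone, homogeneously. The hard part is that this substitution only works directly when $q_k$ has the \emph{same} degree in $Y$ as in $Z$, so I would first equalise the two degrees. If $q_k$ has bidegree $(d_k,e_k)$ with $d_k<e_k$, then on the locus $\upsilon\neq0$ the equation $q_k\doteq0$ is equivalent to the conjunction $\Land_{i}Y_i^{\,e_k-d_k}q_k\doteq0$, whose conjuncts all have bidegree $(e_k,e_k)$; the case $e_k<d_k$ is symmetric, using powers of the $Z_j$ and the hypothesis $\zeta\neq0$. I may thus assume every $q_k$ has bidegree $(d,d)$. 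Each monomial $Y^\alpha Z^\beta$ with $|\alpha|=|\beta|=d$ then becomes, after pairing the $Y$-factors with the $Z$-factors in any fixed way, a product $\prod_{t=1}^{d}W_{i_tj_t}$; this produces $\tilde q_k(x,W)\in\Z[X,W]$, homogeneous of degree $d$ in $W$, with $\tilde q_k(x,(\upsilon_i\zeta_j))=q_k(x,\upsilon,\zeta)$ independently of the chosen pairing.

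Putting these together, the given formula is equivalent in $\acfp$ to the tame formula
\[
\exists\,W\in P^{rs}\Bigl(\neg W\doteq0\;\land\;\Land_{i<i',\,j<j'}\bigl(W_{ij}W_{i'j'}-W_{ij'}W_{i'j}\bigr)\doteq0\;\land\;\Land_{k}\tilde q_k(x,W)\doteq0\Bigr),
\]
which has exactly the shape required by Definition \ref{D:tame_formel}. For the forward direction, a witness $(\upsilon,\zeta)$ yields the nonzero matrix $W=(\upsilon_i\zeta_j)\in E^{rs}$, which satisfies the minor relations and the $\tilde q_k$. Conversely, any nonzero $W\in E^{rs}$ with vanishing $2\times2$-minors factors over $E$ as $(\upsilon_i\zeta_j)$ with $\upsilon,\zeta\neq0$, recovering a witness of the original formula. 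I expect the only genuine content to be the Segre/rank-one fact together with the bookkeeping needed to balance the two bidegrees; the remaining verifications are routine.
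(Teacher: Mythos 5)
Your proof is correct, but it takes a genuinely different route from the paper's. You implement the honest Segre embedding: you introduce the $rs$ product coordinates $W_{ij}$, cut out the rank-one locus by the vanishing of all $2\times2$ minors, and then rewrite each $q_k$ as a polynomial $\tilde q_k(x,W)$ homogeneous in $W$ -- which forces you to first balance the two partial degrees by multiplying with powers of the $Y_i$ (resp.\ $Z_j$), justified on the locus $\upsilon\neq0$ (resp.\ $\zeta\neq0$). The paper avoids both the minors and the rewriting: it quantifies over an \emph{arbitrary} nonzero matrix $\xi=(\xi_{i,j})\in P^{rs}$ and simply imposes $q_k(x,\xi_{*,j},\xi_{i,*})\doteq0$ for every column-row pair $(j,i)$ and every $k$. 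In the forward direction the Segre matrix $\xi_{i,j}=\upsilon_i\zeta_j$ works because $q_k(x,\zeta_j\upsilon,\upsilon_i\zeta)=\zeta_j^{d_k}\upsilon_i^{e_k}q_k(x,\upsilon,\zeta)$ by separate homogeneity (no need for equal bidegrees), and in the converse direction the column and row through any nonzero entry already furnish a witness, so the rank-one constraint is never needed. Your construction buys a formula that literally defines the cone over a subvariety of the Segre variety (which is geometrically cleaner and makes the witness set canonical), at the cost of the bidegree bookkeeping; the paper's trick is shorter and entirely mechanical. Both yield tame formulae as required.
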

\begin{proof}
  With the notation $\xi_{\ast,j}=\xi_{1,j},\dotsc,\xi_{r,j}$ and
  $\xi_{i,\ast}=\xi_{i,1},\dotsc,\xi_{i,s}$, the previous formula is
  equivalent in $\acfp$   to the tame formula
  \[\exists (\xi_{1,1},\dotsc,\xi_{rs})\in P^{r,s}\setminus 0
  \Land_{i,j,k=1}^{r,s,m}q_k(x,\xi_{*,j},\xi_{i,*})\doteq0.\]
\end{proof}

\begin{cor}\label{C:conj_tame}
  The collection of tame formulae is closed under conjunctions and
  disjunctions.
\end{cor}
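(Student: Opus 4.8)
The plan is to handle the two closure properties separately: conjunction reduces immediately to Lemma~\ref{L:segre}, while disjunction is obtained by concatenating the two tuples of $P$-variables, after a preliminary normalisation that makes every defining polynomial positive in degree. Throughout, write the two tame formulae as
\[ \phi_1(x)=\exists\,\upsilon\in P^{r_1}\bigl(\neg\upsilon\doteq0\land\bigwedge_j q^{(1)}_j(x,\upsilon)\doteq0\bigr) \]
and
\[ \phi_2(x)=\exists\,\zeta\in P^{r_2}\bigl(\neg\zeta\doteq0\land\bigwedge_k q^{(2)}_k(x,\zeta)\doteq0\bigr), \]
with each $q^{(1)}_j$ homogeneous in $\upsilon$ and each $q^{(2)}_k$ homogeneous in $\zeta$, following Definition~\ref{D:tame_formel}.

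For the \emph{conjunction}, I would observe that $\phi_1\land\phi_2$ is already in the hypothesis shape of Lemma~\ref{L:segre}: it is literally
\[ \exists\,\upsilon\in P^{r_1}\,\exists\,\zeta\in P^{r_2}\bigl(\neg\upsilon\doteq0\land\neg\zeta\doteq0\land\bigwedge_j q^{(1)}_j(x,\upsilon)\doteq0\land\bigwedge_k q^{(2)}_k(x,\zeta)\doteq0\bigr). \]
Each $q^{(1)}_j$ is homogeneous in $\upsilon$ and, since it does not involve $\zeta$, is homogeneous of degree $0$ in $\zeta$; symmetrically for each $q^{(2)}_k$. Hence all defining polynomials are homogeneous in the two blocks $\upsilon$ and $\zeta$ separately, and Lemma~\ref{L:segre} rewrites the displayed formula as a tame one. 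This settles closure under conjunction in one line.

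For the \emph{disjunction}, the idea is to pass to the concatenated tuple $(\upsilon,\zeta)\in P^{r_1+r_2}$ and impose both systems at once, the point being that $(\upsilon,\zeta)\neq0$ forces at least one block to be a genuine witness. For this to capture $\phi_1\lor\phi_2$ exactly, I would first normalise so that every defining polynomial has positive degree in its block. If some $q^{(1)}_j(x,\upsilon)=p(x)$ has degree $0$ in $\upsilon$, I replace the equation $p(x)\doteq0$ by the family $\{\upsilon_k\,p(x)\doteq0\}_{k\leq r_1}$; under the constraint $\upsilon\neq0$ some $\upsilon_k\neq0$, so this family is equivalent to $p(x)\doteq0$, and each new polynomial is homogeneous of degree $1$ in $\upsilon$. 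After this replacement (and the symmetric one for $\phi_2$) I obtain equivalent tame formulae in which all $q^{(1)}_j$, $q^{(2)}_k$ have positive degree, hence vanish whenever their block is $0$. I then claim $\phi_1\lor\phi_2$ is equivalent to the tame formula
\[ \exists\,(\upsilon,\zeta)\in P^{r_1+r_2}\bigl(\neg(\upsilon,\zeta)\doteq0\land\bigwedge_j q^{(1)}_j(x,\upsilon)\doteq0\land\bigwedge_k q^{(2)}_k(x,\zeta)\doteq0\bigr), \]
which is tame because, as polynomials in the single block $(\upsilon,\zeta)$, all the $q^{(1)}_j$ and $q^{(2)}_k$ are homogeneous. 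For the equivalence: if $\phi_1$ holds with witness $\upsilon_0\neq0$, then $(\upsilon_0,0)$ is nonzero and satisfies every equation, since the positive-degree $q^{(2)}_k$ vanish at $\zeta=0$; symmetrically for $\phi_2$. Conversely, any witness $(\upsilon,\zeta)\neq0$ has $\upsilon\neq0$ or $\zeta\neq0$, yielding a witness for $\phi_1$ or for $\phi_2$.

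The conjunction is thus routine, being a direct instance of Lemma~\ref{L:segre}. The only real subtlety, and the step I would expect to require care, is the normalisation preceding the disjunction: without it, padding a witness of one disjunct by zeros in the other block could falsify a constant-in-$P$ equation coming from the inactive disjunct, so those equations must first be made homogeneous of positive degree.
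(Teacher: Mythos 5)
Your proof is correct. For the conjunction you do exactly what the paper intends: $\phi_1\land\phi_2$ is literally in the shape of Lemma~\ref{L:segre}, each block of polynomials being homogeneous of degree $0$ in the other block's variables, so that lemma finishes it in one line. For the disjunction you take a genuinely different route. The paper gives no proof, but the natural argument suggested by placing the corollary right after Lemma~\ref{L:segre} is to express $\phi_1\lor\phi_2$ as $\exists\,\upsilon\in P^{r_1}\,\exists\,\zeta\in P^{r_2}$, both nonzero, with the pairwise products $q^{(1)}_j(x,\upsilon)\cdot q^{(2)}_k(x,\zeta)\doteq 0$: since $K$ is an integral domain, the vanishing of all products forces one full system to vanish, and the products are homogeneous in $\upsilon$ and $\zeta$ separately, so Lemma~\ref{L:segre} applies again. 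You instead concatenate the witness tuples into a single block $(\upsilon,\zeta)\in P^{r_1+r_2}$, which avoids the Segre lemma entirely but requires exactly the normalisation you perform: Definition~\ref{D:tame_formel} allows polynomials that are homogeneous of degree $0$ in the $P$-variables, i.e.\ pure polynomial conditions on $x$, and padding a witness by zeros would not be permitted to ignore those. Replacing such a $p(x)\doteq 0$ by the family $\upsilon_k\,p(x)\doteq 0$ is correct (equivalent under $\upsilon\neq 0$, and homogeneous of degree $1$), after which every defining polynomial vanishes on the zero block and the equivalence with $\phi_1\lor\phi_2$ holds in both directions as you check. Both arguments are sound; yours is more elementary for the disjunction, and you correctly identified the degree-zero subtlety that the naive concatenation would miss.
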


In order to prove that tame formulae determine the type in $\acfp$, we
need a short observation regarding the $E$-annihilator of a (possibly
infinite) tuple. Fix some enumeration
$\left(M_i(x_1,\dotsc,x_s)\right)_{i=1,2,\ldots}$ of all monomials in
$s$ variables. Given a tuple $a$ of length $s$, denote
\[\Ann_n(a) =\biggl\{(\lambda_1,\ldots,\lambda_n) \in \E^n\;
\biggm | \;\sum\limits_{i=1}^n \lambda_i\cdot M_i(a) =0\biggr \}.\]

\begin{notation}
  If we denote by $x\cdot y$ the scalar multiplication of two tuples
  $x$ and $y$ of length $n$, that is
  \[ x\cdot y= \sum\limits_{i=1}^n x_i\cdot y_i,\] then
\[ \Ann_n(a) =\{ \lambda \in \E^n \mid  \lambda \cdot (M_1(a),\ldots,
M_n(a))=0\}.\]
\end{notation}

\begin{lemma}\label{L:tp_ann}
  Two tuples $a$ and $b$ of $K$ have the same type if and only if
  \[\ldim_E \Ann_n(a) =\ldim_E \Ann_n(b)\] \noindent and the type
  $\tp(\pk(\Ann_n(a)))$ equals $\tp(\pk(\Ann_n(a)))$ (in the pure
  field language), for every $n$ in $\N$.
\end{lemma}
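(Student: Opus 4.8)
The plan is to read the family $(\Ann_n(a))_n$ as a presentation of the vanishing ideal $I(a/E)=\{f\in E[X]\mid f(a)=0\}\subseteq E[X]$: under the fixed monomial enumeration, a polynomial $f$ whose monomials lie among $M_1,\ldots,M_n$ belongs to $I(a/E)$ exactly when its coefficient vector lies in $\Ann_n(a)$, so the whole family records $I(a/E)$ and conversely. Since $\acfp$ has quantifier elimination in $\LLD$ (Delon), it suffices to match the quantifier-free $\LLD$-types of $a$ and $b$. The forward direction is then immediate: if $\tp(a)=\tp(b)$, homogeneity of the monster yields an $\LL_P$-automorphism $\hat\sigma$ with $\hat\sigma(a)=b$; as $E=P(K)$ is $\emptyset$-definable, $\sigma:=\hat\sigma\restriction E$ is a field automorphism of $E$, and acting coordinatewise it carries each linear relation among the $M_i(a)$ to one among the $M_i(b)$, hence induces a $\sigma$-semilinear bijection $\Ann_n(a)\to\Ann_n(b)$. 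This preserves the dimension and, since $\sigma$ commutes with the Plücker construction, sends $\pk(\Ann_n(a))$ to $\pk(\Ann_n(b))$; being a field automorphism, $\sigma$ preserves types in the pure field language, giving the right-hand side.

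For the converse, the first task is to upgrade the per-$n$ hypothesis to a statement about the joint type of the whole sequence $(\pk(\Ann_n(a)))_n$. The key structural observation is the coherence $\Ann_m(a)=\Ann_n(a)\cap E^m$ for $m\le n$, so that $\pk(\Ann_m(a))$ is the value at $\pk(\Ann_n(a))$ of a fixed $\emptyset$-definable (pure field) function $g_{m,n}$, obtained by recovering the subspace from its Plücker point, intersecting with a coordinate subspace, and taking Plücker coordinates again; here the dimension hypothesis $\ldim_E\Ann_n(a)=\ldim_E\Ann_n(b)$ guarantees that both points lie in the same Grassmannian $\Gr_k(E^n)$, so these functions behave uniformly. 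Consequently, for any finite set $F$ of indices with $N=\max F$, the tuple $(\pk(\Ann_n(a)))_{n\in F}$ is an $\emptyset$-definable image of the single point $\pk(\Ann_N(a))$, and likewise for $b$; the per-$n$ equality of types therefore propagates to every finite sub-tuple and hence to the joint type $\tp((\pk(\Ann_n(a)))_n)=\tp((\pk(\Ann_n(b)))_n)$.

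Since $E$ is a pure algebraically closed field (Fact \ref{F:Kiesler}) inheriting sufficient saturation from $(K,E)$, this joint equality produces a field automorphism $\sigma\in\operatorname{Aut}(E)$ with $\sigma(\pk(\Ann_n(a)))=\pk(\Ann_n(b))$, and thus $\sigma(\Ann_n(a))=\Ann_n(b)$ as subspaces for every $n$; applying $\sigma$ to coefficients gives $\sigma(I(a/E))=I(b/E)$. I would then extend $\sigma$ to an $\LL_P$-automorphism $\hat\sigma$ of $(K,E)$ (extend $\sigma$ across a transcendence basis of $K$ over $E$ and then algebraically) and set $a'=\hat\sigma(a)$, so that $a'\equiv_{\LL_P}a$ and $I(a'/E)=\sigma(I(a/E))=I(b/E)$. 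Having the same vanishing ideal over $E$, the field isomorphism $E[a']\to E[b]$ fixing $E$ and sending $a'\mapsto b$ is well defined; both are $\LLD$-substructures, being rings containing $E$ while the $\lambda$-functions take values in $E$, and because the isomorphism is $E$-linear it preserves $E$-linear (in)dependence and all $\lambda$-values, i.e.\ it is an $\LLD$-isomorphism fixing $E$ pointwise. Hence $a'$ and $b$ have the same quantifier-free $\LLD$-type, and by Delon's quantifier elimination $a'\equiv_{\LL_P}b$, whence $a\equiv_{\LL_P}b$.

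The only genuinely delicate point is the passage from the per-$n$ hypothesis to the joint type of the sequence $(\pk(\Ann_n(a)))_n$, which is exactly what lets us produce a single automorphism $\sigma$ of $E$ working for all $n$ at once; everything else is bookkeeping legitimised by the monomial coherence $\Ann_m=\Ann_n\cap E^m$ and by the relative quantifier elimination in $\LLD$. Here I read the displayed equality in the statement as $\tp(\pk(\Ann_n(a)))=\tp(\pk(\Ann_n(b)))$, the second tuple being $b$.
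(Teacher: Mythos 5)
Your proof is correct and follows essentially the same route as the paper's: the coherence $\Ann_m(a)=\Ann_n(a)\cap E^m$ upgrades the per-$n$ hypothesis to equality of the joint types of the Pl\"ucker points, yielding one automorphism of $E$ that carries every $\Ann_n(a)$ to $\Ann_n(b)$ and hence a ring isomorphism $E[a]\to E[b]$, which is then extended to an automorphism of the pair. The only (immaterial) difference is in the endgame, where you route through Delon's quantifier elimination in $\LLD$ instead of quoting directly that the $\LL_P$-type of a tame subfield is determined by its quantifier-free type, as the paper does via Fact \ref{F:Kiesler}.
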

\begin{proof}
  We need only prove the right-to-left implication. Since $\pk(\Ann_i(a))$ is
  determined by $\pk(\Ann_n(a))$, for $i\leq n$, we obtain an
  automorphism of $E$ mapping $\pk(\Ann_n(a))$ to $\pk(\Ann_n(b))$ for
  all $n$. This automorphism maps $\Ann_n(a)$ to $\Ann_n(b)$ for all
  $n$ and hence extends to an isomorphism of the rings $E[a]$ and
  $E[b]$. It clearly extends to a field isomorphism of the tame
  subfields $E(a)$ and $E(b)$ of $K$, which in turn can be extended to
  an automorphism of $(K,E)$. So $a$ and $b$ have the same
  $\acfp$-type, as required.
\end{proof}

\begin{prop}\label{P:Tame_Type}
  Two tuples $a$ and $b$ of $K$ have the same $\acfp$-type if and
  only if they satisfy the same tame formulae.
\end{prop}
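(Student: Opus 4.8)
The plan is to prove Proposition \ref{P:Tame_Type} by reducing it to Lemma \ref{L:tp_ann}. One direction is immediate: if $a$ and $b$ have the same $\acfp$-type, then they satisfy the same $\LL_P$-formulae, and in particular the same tame formulae. So the content is the converse. Assume that $a$ and $b$ satisfy the same tame formulae; I want to verify the two quantifier-free invariants of Lemma \ref{L:tp_ann}, namely that $\ldim_E\Ann_n(a)=\ldim_E\Ann_n(b)$ and that $\pk(\Ann_n(a))$ and $\pk(\Ann_n(b))$ have the same type in the pure field language, for every $n$.

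First I would express these invariants in terms of tame formulae. Fixing $n$, the space $\Ann_n(a)$ is the $E$-linear space of $\lambda\in E^n$ with $\lambda\cdot(M_1(a),\dots,M_n(a))=0$; saying $\dim_E\Ann_n(a)\geq k$ amounts to asserting the existence of $k$ linearly independent vectors $\zeta^{(1)},\dots,\zeta^{(k)}\in E^n$, each annihilating the monomial vector of $a$. Encoding linear independence of the $\zeta^{(l)}$ via the non-vanishing of some Pl\"ucker coordinate, and the annihilation by the homogeneous linear equations $\sum_i\zeta^{(l)}_i M_i(x)\doteq 0$, this is visibly a tame formula in the sense of Definition \ref{D:tame_formel} (after bundling the $\zeta^{(l)}$ into a single $P$-tuple and using Corollary \ref{C:conj_tame} together with Lemma \ref{L:segre} to absorb the independence condition and reach the required homogeneous shape). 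Hence equality of dimensions transfers. For the Pl\"ucker data, fix the dimension $k=\dim_E\Ann_n(a)=\dim_E\Ann_n(b)$; then $\pk(\Ann_n(a))$ is a point of the Grassmannian $\Gr_k(E^n)$, and I want to show it satisfies the same pure-field formulae as $\pk(\Ann_n(b))$. By quantifier elimination in $\mathsf{ACF}$, such a formula is a Boolean combination of polynomial equations $P(\pk(\Ann_n(x)))\doteq 0$ in the Pl\"ucker coordinates; I would argue that each such condition is again expressible as a tame formula of $x$, by existentially quantifying over a basis $\zeta^{(1)},\dots,\zeta^{(k)}\in P^n$ of the annihilator whose wedge realises those coordinates, and then asserting the polynomial condition on the resulting Pl\"ucker vector.

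The main obstacle will be the honest bookkeeping needed to put each of these assertions into the precise form of Definition \ref{D:tame_formel}: a single existential over a nonzero $P$-tuple followed by a conjunction of polynomials homogeneous in the $P$-variables. The awkward points are (i) that a polynomial condition on Pl\"ucker coordinates is not literally homogeneous of a single degree in the underlying $\zeta^{(l)}$, and (ii) that the Grassmannian membership and the linear-independence (nonvanishing of some maximal minor) conditions must be woven in. Here I would lean on Lemma \ref{L:segre} to multiply out several $P$-tuples into one and to homogenise, on Corollary \ref{C:conj_tame} to combine the finitely many pieces, and on the explicit description from Section \ref{S:fields} that $\Gr_k(E^n)$ is cut out by homogeneous polynomials; the nonvanishing ``$\zeta\neq 0$'' is already part of the tame template, and independence of several vectors is handled by requiring nonvanishing of a chosen Pl\"ucker coordinate, which Lemma \ref{L:segre} lets me fold into the homogeneous system.

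Once both invariants are shown to transfer across tuples satisfying the same tame formulae, Lemma \ref{L:tp_ann} immediately yields $\tp(a)=\tp(b)$, completing the proof. In writing it up I would state the forward direction in one line, then prove the converse by fixing $n$, producing the explicit tame formulae witnessing ``$\dim_E\Ann_n\geq k$'' and ``the Pl\"ucker point satisfies $P(\cdot)\doteq 0$'', and invoking the hypothesis to conclude equality of all these invariants before appealing to Lemma \ref{L:tp_ann}.
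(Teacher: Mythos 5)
Your reduction to Lemma \ref{L:tp_ann} is exactly the paper's strategy, and your observation that once $\ldim_E\Ann_n(a)=\ldim_E\Ann_n(b)=k$ is pinned down, any $k$-dimensional subspace of the annihilator is the annihilator itself, is the right way to avoid a universal ``spanning'' condition. The gap is in the encoding you propose for the invariants: you existentially quantify over $k$ separate vectors $\zeta^{(1)},\dotsc,\zeta^{(k)}$ in $P^n$ and must impose their linear independence. That is an \emph{open} condition --- the non-vanishing of $\zeta^{(1)}\wedge\dotsb\wedge\zeta^{(k)}$, i.e.\ of at least one maximal minor --- and it does not fit the tame template of Definition \ref{D:tame_formel}, whose only permitted inequality is $\neg\zeta\doteq 0$ on the quantified tuple. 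The tools you cite do not repair this: Lemma \ref{L:segre} (the Segre trick) only merges several separate conditions ``$\upsilon\neq 0$'', ``$\zeta\neq 0$'' into one, not the non-vanishing of a determinant in the entries; and if you introduce the wedge as a fresh $P$-tuple $\eta$ with $\eta\neq 0$, the equations tying $\eta$ to $\zeta^{(1)}\wedge\dotsb\wedge\zeta^{(k)}$ are not homogeneous in the variable groups separately, while the proportionality version via $2\times 2$ minors is vacuously satisfied when the wedge is zero, so it does not force independence either. Without independence the formula degenerates: the wedge vanishes and every homogeneous condition on the resulting Pl\"ucker vector holds trivially, so your Pl\"ucker-transfer formula would be satisfied by any tuple with non-trivial annihilator.

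The paper sidesteps this entirely by quantifying over a single tuple $\zeta$ ranging over the Pl\"ucker coordinates themselves: membership in $\Gr_k(E^n)$ is cut out by homogeneous polynomials (Section \ref{S:fields}), the condition $\neg\zeta\doteq 0$ of the tame template already guarantees that $\zeta$ codes an honest $k$-dimensional subspace $V_\zeta$, and $V_\zeta\subseteq\Ann_n(x)$ is expressed by the homogeneous linear equations $(e\pluck\zeta)\cdot(M_1(x),\dotsc,M_n(x))\doteq 0$ as $e$ runs over a basis of $\extp^{k-1}(E^n)^*$. With that single change your argument goes through. (For the dimension invariant alone there is also the alternative of noting that $\ldim_E\Ann_n(x)\geq k$ is equivalent to the conjunction of $\dep_{n-k+1}$ over all $(n-k+1)$-subsets of the monomials, which is tame by Corollary \ref{C:conj_tame}; but the Pl\"ucker invariant still requires the Grassmannian quantification.)
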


\begin{proof}
Let $q_1(Z),\ldots, q_m(Z)$ be homogeneous polynomials over $\Z$.
By Lemma \ref{L:tp_ann}, it suffices to show that

\begin{center}
  \guillemotleft \, $\Ann_n(x)$ has a $k$-dimensional subspace $V$
  such that $\Land_{j\leq m} q_j(\pk(V))=0 $ \guillemotright
\end{center}
is expressible by a tame
formula. Indeed, it suffices to guarantee that there is an
element $\zeta$ in $\Gr_k(E^n)$ such that \[(e\pluck \zeta)\cdot
(M_1(x),\ldots, M_n(x))=0\]
\noindent for all $e$ from a a fixed basis of $\extp^{k-1} (E^n)^*$,
and
\[ \Land_{j\leq m} q_j(\zeta)=0.\]
In particular, the tuple $\zeta$ is not trivial, so we conclude that the
above is a tame formula.
\end{proof}

By compactness, we conclude the following:

\begin{cor}\label{C:Tame_BK}
  In the (incomplete) theory $\acfp$ of proper pairs of algebraically
  closed fields, every formula is a Boolean combination of tame
  formulae.
\end{cor}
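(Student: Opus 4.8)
The plan is to derive the corollary from Proposition \ref{P:Tame_Type} by the standard separation argument on the space of types, which is what the phrase ``by compactness'' refers to. Fix the tuple $x$ and let $S$ be the Stone space of complete $\acfp$-types in $x$; for an $\LL_P$-formula $\chi(x)$ write $[\chi]=\{p\in S\mid\chi\in p\}$ for the associated clopen set. Saying that an arbitrary $\psi(x)$ is equivalent modulo $\acfp$ to a Boolean combination of tame formulae is exactly saying that $[\psi]$ lies in the Boolean subalgebra $\mathcal A\subseteq\mathrm{Clop}(S)$ generated by the clopen sets $[\theta]$ with $\theta$ tame. First I would record that $\mathcal A$ separates the points of $S$: if $p\neq q$ have the same characteristic, I pick realisations in a saturated model of the corresponding completion and apply Proposition \ref{P:Tame_Type} to obtain a tame $\theta$ with $\theta\in p\iff\theta\notin q$; if $p$ and $q$ have different characteristic they are already separated by a polynomial equation $N\cdot 1\doteq 0$, which is tame (take $r=1$ and the degree-one form $\zeta\cdot(N\cdot 1)$ in Definition \ref{D:tame_formel}).

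The main compactness step is the purely topological fact that a point-separating subalgebra of the clopen algebra of a Boolean space is the whole algebra, and I would apply it to $C=[\psi]$. For each $p\in C$ I would produce $A_p\in\mathcal A$ with $p\in A_p\subseteq C$: for every $q$ in the compact complement $S\setminus C$ choose $A_{p,q}\in\mathcal A$ with $p\in A_{p,q}$ and $q\notin A_{p,q}$; the clopen sets $S\setminus A_{p,q}$ then cover $S\setminus C$, and a finite subcover yields the finite intersection $A_p$, which contains $p$ and is disjoint from $S\setminus C$. The family $\{A_p\}_{p\in C}$ is a clopen cover of the compact set $C$ by members of $\mathcal A$ contained in $C$, so a finite subcover exhibits $C$ as a finite union of elements of $\mathcal A$; hence $C\in\mathcal A$. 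Unwinding the Boolean operations through Corollary \ref{C:conj_tame} (closure of tame formulae under finite conjunction and disjunction), this says precisely that $\psi$ is a Boolean combination of tame formulae.

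I do not expect a genuine obstacle, but the one point needing care is that the tame formulae are not closed under negation, so ``separating points'' must be read in the symmetric sense $\theta\in p\iff\theta\notin q$ rather than insisting on a single tame formula lying in $p$ and excluded from $q$. This is exactly what Proposition \ref{P:Tame_Type} provides, and it is all the argument uses, since the generated subalgebra $\mathcal A$ is closed under complements by construction. Equivalently, one may phrase the whole step via Stone duality: the inclusion $\mathcal A\hookrightarrow\mathrm{Clop}(S)$ dualises to a continuous surjection of Boolean spaces which is injective because $\mathcal A$ separates points, hence a homeomorphism, forcing $\mathcal A=\mathrm{Clop}(S)$.
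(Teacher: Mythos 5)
Your argument is correct and is exactly the standard separation-of-types compactness argument that the paper invokes with the single phrase ``by compactness'' after Proposition \ref{P:Tame_Type}; you have simply written out the details, including the minor point about separating completions of different characteristic by a tame polynomial equation. No divergence from the paper's route and no gaps.
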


\section{Equationality of belles paires of algebraically closed
  fields}\label{S:EqPairs}

In order to show that the stable theory $\acfp$ of proper pairs of
algebraically closed fields is equational, we need only consider tame
formulae with respect to some partition of the variables, by Corollary
\ref{C:Tame_BK}. As before, work inside a sufficiently saturated model
$(K,E)$ of $\acfp$ in the language $\LL_P=\LL_{rings}\cup\{P\}$, where
$E=P(K)$ is the proper subfield.

\noindent Consider the following special case as an auxiliary result.

\begin{lemma}\label{L:tame_vble_in_P}
  Let $\phi(x;y)$ be a tame formula. The formula
  \[\phi(x;y) \land x \in P\] is an equation.

\end{lemma}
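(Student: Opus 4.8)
The plan is to show directly that each instance of $\psi(x;y):=\phi(x;y)\land x\in P$ defines a Zariski-closed subset of $E^{|x|}$, and then to invoke noetherianity of the Zariski topology (Example \ref{E:hilbert}) to obtain the descending chain condition for finite intersections. The guiding observation is that, once we impose $x\in P$, both the free variables $x$ and the witnessing tuple $\zeta$ of the tame formula range over $E=P(K)$, which by Fact \ref{F:Kiesler} carries the structure of a pure algebraically closed field. Writing the tame formula as
\[
\phi(x;y)=\exists\,\zeta\in P^r\Bigl(\neg\zeta\doteq 0\;\land\;\Land_{j\leq m}q_j(x,y,\zeta)\doteq 0\Bigr),
\]
with the $q_j$ homogeneous in $\zeta$, I would fix a parameter $b$ (a value of $y$) and study the set $\psi(\mathbb U,b)\subseteq E^{|x|}$.

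First I would use the homogeneity of the $q_j$ in $\zeta$ to pass to projective space: the condition $\Land_{j\leq m} q_j(a,b,\zeta)\doteq 0$ together with $\zeta\neq 0$ is invariant under rescaling $\zeta$, hence depends only on the point $[\zeta]\in\Proj^{r-1}(E)$, and the requirement $\zeta\neq 0$ is subsumed by working in $\Proj^{r-1}$. Thus, setting
\[
W_b=\bigl\{(a,[\zeta])\in E^{|x|}\times\Proj^{r-1}(E)\;\bigm|\;\Land_{j\leq m}q_j(a,b,\zeta)=0\bigr\},
\]
the instance $\psi(\mathbb U,b)$ is exactly the image of $W_b$ under the projection $E^{|x|}\times\Proj^{r-1}(E)\to E^{|x|}$. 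Since $\Proj^{r-1}$ is a complete variety (Remark \ref{R:proj_complete}), this projection is a closed map, so $\psi(\mathbb U,b)$ is Zariski-closed in $E^{|x|}$, provided $W_b$ is closed. Finite intersections of such sets are again Zariski-closed, and noetherianity of the Zariski topology on $E^{|x|}$ then yields the DCC, which is exactly the assertion that $\psi(x;y)$ is an equation.

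The main obstacle is to verify that $W_b$ is genuinely Zariski-closed in the $E$-topology, and here the subtlety is that the parameter $b$ need not lie in $E$: the polynomials $q_j(X,b,Z)$ then have coefficients in $\mathbb U\setminus E$, while the arguments $a,\zeta$ run through $E$. I would handle this by expanding each coefficient of $q_j(X,b,Z)$ in a fixed $E$-basis $(\omega_\beta)$ of the finite-dimensional $E$-subspace of $\mathbb U$ that these coefficients generate. Writing $q_j(a,b,\zeta)=\sum_\beta\omega_\beta\,r_{j,\beta}(a,\zeta)$ with each $r_{j,\beta}$ a polynomial over $E$, and using that the $\omega_\beta$ are $E$-linearly independent while the values $r_{j,\beta}(a,\zeta)$ lie in $E$, the single equation $q_j(a,b,\zeta)=0$ becomes equivalent to the finite system $r_{j,\beta}(a,\zeta)=0$ over $E$. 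Hence the vanishing of all the $q_j$ cuts out an honest $E$-Zariski-closed subset of $E^{|x|}\times\Proj^{r-1}(E)$, which is precisely what the projection argument requires. Once this linear-algebra reduction is in place, the remainder is the formal chain: closed instances, closed finite intersections, and DCC by Hilbert's Basis Theorem, uniformly in every completion of $\acfp$.
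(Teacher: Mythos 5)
Your proposal is correct and follows essentially the same route as the paper's proof: reduce the defining polynomials to ones with coefficients in $E$ by expanding the $b$-dependent coefficients in an $E$-basis (using that $x$ and $\zeta$ now range over $E$ and that the basis elements are $E$-linearly independent), then observe that the resulting set is the projection of a closed subset of $E^{|x|}\times\Proj^{r-1}(E)$, which is Zariski-closed by completeness of projective space, so the DCC follows from noetherianity of the Zariski topology.
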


\begin{proof}

  Let $b$ be a tuple in $K$ of length $|y|$, and suppose that the
  formula $\phi(x,b)$ has the form

  \[
\phi(x,b)\;\;=\;\;\exists\, \zeta \in P^r \biggl(\neg\zeta\doteq 0\,
\land\; \Land_{j\leq m} q_j(x,b,\zeta)\doteq 0 \biggr).\]

\noindent for some polynomials $q_1, \ldots,q_m$ with integer
coefficients and  homogeneous in  $\zeta$. Express each of
the monomials in $b$ appearing in the above equation as a linear
combination of a basis of $K$ over $E$. We see that there are
 polynomials $r_1, \ldots, r_s$ with coefficients in $E$,
homogeneous in  $\zeta$, such that the formula
$\phi(x,b) \wedge x\in P$ is equivalent to
 \[\exists\, \zeta \in P^r\;
\biggl(\neg\zeta\doteq 0\; \land\; \Land_{j\leq s} r_j(x,\zeta)\doteq
0 \biggr).\]

\noindent Working inside the algebraically closed subfield $E$, the
expression inside the brackets is a projective variety, which is hence
complete. By Remark \ref{R:proj_complete}, its projection is again
Zariski-closed, as desired.
\end{proof}

\begin{prop}\label{P:tame_ic}
 Let $\phi(x;y)$ be a tame formula.  The formula $\phi(x;y)$ is an equation.
\end{prop}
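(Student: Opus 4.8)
The plan is to reduce the general statement to the special case $x \in P$ established in Lemma~\ref{L:tame_vble_in_P}, transferring along the linear disjointness of Corollary~\ref{C:coh_pair}. I will verify equationality through Srour's criterion in the form of Remark~\ref{R:Srour}: it suffices to show that whenever $a \ind_M N$ for elementary substructures $M \subset N$, then $\tp(a/M) \proves \tp_\phi^+(a/N)$. (One could equally argue, as in the proof of Theorem~\ref{T:eq_SCF}, that each set $\phi(a,y)$ is \icl, reducing in the same manner.)

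Fix an instance $\phi(x,b) \in \tp_\phi^+(a/N)$, so $b \in N$, $\models \phi(a,b)$, and write $\phi(a,y)$ as $\exists\, \zeta \in P^r(\neg\zeta\doteq 0 \land \bigwedge_j q_j(a,y,\zeta)\doteq 0)$. Pick a basis $a_1,\dots,a_k$ of the $E\cdot M$-span of the monomials in $a$ occurring in the $q_j$, and expand $q_j(a,y,\zeta) = \sum_{s} a_s\, h_{j,s}(y,\zeta)$, where the $h_{j,s}$ are homogeneous in $\zeta$ with coefficients in $E\cdot M$. After clearing denominators these coefficients lie in the ring $E[M]$; since $M$ is tame, each is a finite $E$-linear combination of $E$-linearly independent elements of $M$. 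Recording the $E$-parts in a tuple $c$ from $P$ and the $M$-parts in a tuple $m$, I obtain polynomials $h_{j,s}(y,\zeta;x',y')$, linear in the coefficient variables, with $h_{j,s}(y,\zeta) = h_{j,s}(y,\zeta;c,m)$. I may take $c$ in $\dcl(Ma) \cap P$, so that $ac \ind_M N$ and in particular $c \ind_M N$.

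Because $a \ind^P_M b$, Corollary~\ref{C:coh_pair} keeps $a_1,\dots,a_k$ linearly independent over $E\cdot M(b)$; as each $h_{j,s}(b,\zeta;c,m)$ lies in $E\cdot M(b)$ for $\zeta \in P$, the instance $\phi(a,b)$ becomes equivalent to $\psi'(c;b)$, where
\[\psi'(x';y) = \exists\, \zeta \in P^r\Bigl(\neg\zeta\doteq 0 \land \textstyle\bigwedge_{j,s} h_{j,s}(y,\zeta;x',m)\doteq 0\Bigr)\]
is a tame formula in the partition $(x';y)$ whose free variable $x'$, like the existential witness $\zeta$, ranges over $P$. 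Moreover, for any $M$-automorphism $\sigma$, if $\bigwedge_{j,s} h_{j,s}(b,\zeta;\sigma(c),m) = 0$ for some nonzero $\zeta$ in $P$, then $q_j(\sigma(a),b,\zeta) = \sum_s \sigma(a_s)\, h_{j,s}(b,\zeta;\sigma(c),m) = 0$, so that $\psi'(\sigma(c);b)$ forces $\phi(\sigma(a),b)$.

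Now $\psi'(x';y) \land x' \in P$ is an equation by Lemma~\ref{L:tame_vble_in_P}. Since $\psi'(c;b) \land c \in P$ holds and $c \ind_M N$, Remark~\ref{R:Srour} yields $\tp(c/M) \proves \psi'(x';b) \land x' \in P$. Given $\tilde a \models \tp(a/M)$, realised by an $M$-automorphism $\sigma$ with $\sigma(a) = \tilde a$, the tuple $\tilde c = \sigma(c)$ lies in $P$ and realises $\tp(c/M)$, whence $\psi'(\tilde c;b)$ holds and therefore $\phi(\tilde a,b)$ holds; this proves $\tp(a/M) \proves \tp_\phi^+(a/N)$. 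The step I expect to be most delicate is the reduction itself: arranging that the coefficients from the $E\cdot M$-basis expansion split into a tuple $c$ from $P$ together with parameters from $M$, with $c$ chosen inside $\dcl(Ma)$ so that $c \ind_M N$. Once this bookkeeping is in place, Corollary~\ref{C:coh_pair} carries the analytic content and Lemma~\ref{L:tame_vble_in_P} supplies the base case, leaving only a direct application of Srour's criterion.
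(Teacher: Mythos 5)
Your proof is correct and follows essentially the same route as the paper's: expand the monomials of $a$ over a basis of $E\cdot M(a)$ over $E\cdot M$, use Corollary \ref{C:coh_pair} to turn the instance $\phi(a,b)$ into an equivalent instance of a tame formula whose distinguished variables range over $P$, and invoke Lemma \ref{L:tame_vble_in_P}. The only difference is the wrapper: you verify equationality via Srour's criterion (Remark \ref{R:Srour}), which costs the extra bookkeeping of placing $c$ in $\operatorname{dcl}(Ma)\cap P$ so that $c\ind_M N$, plus the automorphism transfer, whereas the paper runs the identical reduction through indiscernible closure (Lemma \ref{L:icl_model}) along a Morley sequence, where the tuple $e$ from $E$ needs no independence requirement.
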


\begin{proof}

We need only show that every instance $\phi(a,y)$ of a tame formula
is \icl. By Lemma \ref{L:icl_model}, it suffices to consider a Morley
sequence $(b_i)_{i\leq \omega}$ over an elementary substructure $M$ of
$(K,E)$ with \[ a\ind^P_M b_i \text{ with } \models \phi(a,b_i)
\text{ for } i <\omega.\]

\noindent Suppose that the formula $\phi(a,y)$ has the form

\[\phi(a,y)\;\;=\;\;\exists\, \zeta \in P^r \biggl(\neg\zeta\doteq
0\;
\land\; \Land_{j\leq m} q_j(a,y,\zeta)\doteq 0 \biggr),\]
\noindent for polynomials $q_1,\ldots,q_n$ with integer coefficients
and homogeneous in $\zeta$.

\noindent By Corollary \ref{C:coh_pair}, the fields $E\cdot M(a)$ and
$E\cdot M(b_i)$ are linearly disjoint over $E(M)$ for every
$i<\omega$. A basis $(c_{\nu})$ of $E \cdot M(a)$ over $E \cdot M$
remains thus linearly independent over $E\cdot M(b_i)$. By
appropriately writing each monomial in $a$ in terms of the basis
$(c_{\nu})$, and after multiplication with a common denominator, we
have that
\[\phi(a,y)\;\;=\;\;\exists\, \zeta \in P^r \biggl(\neg\zeta\doteq
0\;\land\; \Land_{\nu}
  r_\nu(e,m,y,\zeta)\cdot c_\nu \doteq 0 \biggr),\]
\noindent where $e$ a tuple from $E$ and $m$ is a tuple from $M$, and
the polynomials $r_\nu(X,Y',Y,Z)$ are homogeneous in $Z$. Hence,
linearly disjointness implies that
\[K\models \;\;\exists\, \zeta \in P^r \biggl(\neg\zeta\doteq
0\;\land\; \Land_{\nu}
  r_\nu(e,m,b_i,\zeta)\doteq 0 \biggr) \text{ for }
i<\omega.\]
By Lemma \ref{L:tame_vble_in_P}, the formula
\[\phi'(e,y',y)=\exists\, \zeta \in P^r \biggl(\neg\zeta\doteq
  0\;\land\; \Land_{\nu} r_\nu(e,y',y,\zeta)\doteq 0 \biggr)\] is
indiscernibly closed. Since the sequence $(m,b_i)_{i\leq\omega}$ is
indiscernible, we have $K\models\phi'(e,m,b_\omega)$, so
$K\models\phi(a,b_\omega)$, as desired.

\end{proof}

\noindent Corollary \ref{C:Tame_BK} and Proposition \ref{P:tame_ic}
yield now the equationality of $\acfp$.

\begin{theorem}\label{T:eq}
The theory of proper pairs of algebraically closed fields of a
fixed characteristic is equational.
\end{theorem}
\vspace{3em}


\subsection*{Encore! An alternative proof of equationality for
  pairs in characteristic $0$}

We will exhibit an alternative proof to the equationality of the
theory $T_p$ of belles paires of algebraically closed fields in
characteristic $0$, by means of differential algebra,  based on
an idea of G\"unayd\i n \cite{aG16}.

\begin{definition}\label{D:Ehull}
  Consider an arbitrary field $K$ with a subfield $E$. A subspace of
  the vector space $K^n$ is $E$\emph{-defined} if it is generated by
  vectors from $E^n$.

 \noindent  Since the intersection of two $E$-defined subspaces is again
  $E$-defined, every subset $A$ of $K$ is contained in a smallest
  $E$-defined subspace $A^E$, which we call the \emph{$E$-hull} of
  $A$.
\end{definition}
\begin{notation}
We write $v^E$ to denote $\{v\}^E$. Clearly $A^E$ is the sum of all
$v^E$ for $v$ in $A$. The $E$-hull $v^E$ can be computed as follows:
Fix a basis $(c_\nu\mid\nu\in N)$ of $K$ over $E$ and write
$v=\sum_{\nu\in N}c_\nu e_\nu$ for vectors $e_\nu\in E^n$. Then
$\{e_\nu\mid\nu\in N\}$ is a generating set of $v^E$.
\end{notation}
\noindent Similarly, every subset $A$ of the ring of polynomials
$K[X_1,\dotsc,X_n]$ has an $E$-hull $A^E$, that is, the smallest
$E$-defined subspace of $K[X_1,\dotsc,X_n]$.

\begin{lemma}\label{L:Ideal_E}
  Let $I$ be an ideal of $K[X_1,\dotsc,X_n]$. Then $I^E$ is the
  smallest ideal containing $I$ and generated by elements of
  $E[X_1,\dotsc,X_n]$.
\end{lemma}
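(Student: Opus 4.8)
The plan is to show directly that the subspace $I^E$ is an ideal, contains $I$, is generated by polynomials from $E[X_1,\dotsc,X_n]$, and is contained in every ideal having the latter two properties; this exhibits it as the smallest such ideal. First I would fix a basis $(c_\nu \mid \nu \in N)$ of $K$ over $E$, so that each $f \in K[X_1,\dotsc,X_n]$ has a unique decomposition $f = \sum_\nu c_\nu f_\nu$ with $f_\nu \in E[X_1,\dotsc,X_n]$ and almost all $f_\nu$ zero. By the computation of the $E$-hull recalled in the Notation preceding the lemma, $I^E$ is precisely the $K$-linear span of the family $\{f_\nu \mid f \in I,\ \nu \in N\}$ of all $E$-components of elements of $I$. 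That $I^E$ contains $I$ and is $K$-spanned --- hence a fortiori generated as an ideal --- by polynomials from $E[X_1,\dotsc,X_n]$ is then immediate.

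The main obstacle is to verify that $I^E$, which a priori is only a $K$-subspace, is in fact closed under multiplication, i.e.\ an ideal. Since $I^E$ is already $K$-linear, it suffices to prove closure under multiplication by an arbitrary monomial $X^\alpha$. The key observation is that multiplication by $X^\alpha$ commutes with the $E$-decomposition: from $f = \sum_\nu c_\nu f_\nu$ one gets $X^\alpha f = \sum_\nu c_\nu (X^\alpha f_\nu)$, and each $X^\alpha f_\nu$ again lies in $E[X_1,\dotsc,X_n]$, so $X^\alpha f_\nu$ is exactly the $\nu$-th $E$-component of $X^\alpha f$. Because $I$ is an ideal, $X^\alpha f \in I$ whenever $f \in I$; hence all the $E$-components $X^\alpha f_\nu$ of $X^\alpha f$ belong to $I^E$. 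As the $f_\nu$ span $I^E$ over $K$, this yields $X^\alpha \cdot I^E \subseteq I^E$, and therefore $g\cdot I^E \subseteq I^E$ for every $g \in K[X_1,\dotsc,X_n]$, so $I^E$ is an ideal.

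Finally I would check minimality. Any ideal generated as an ideal by polynomials $g_i \in E[X_1,\dotsc,X_n]$ equals $\sum_i K[X_1,\dotsc,X_n]\,g_i$, whose $K$-spanning set $\{X^\alpha g_i\}$ consists of elements of $E[X_1,\dotsc,X_n]$; such an ideal is thus an $E$-defined subspace. Hence any ideal containing $I$ and generated by elements of $E[X_1,\dotsc,X_n]$ is an $E$-defined subspace containing $I$, and so contains the $E$-hull $I^E$ by minimality of the hull. Combined with the previous two paragraphs, $I^E$ is the smallest ideal containing $I$ and generated by elements of $E[X_1,\dotsc,X_n]$, as claimed. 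The only genuinely delicate point is the ideal property in the second paragraph; everything else is a bookkeeping consequence of the description of $I^E$ via $E$-components.
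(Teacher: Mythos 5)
Your proof is correct, and its overall architecture (show $I^E$ is an ideal containing $I$ and spanned by elements of $E[X_1,\dotsc,X_n]$, then show any ideal generated by elements of $E[X_1,\dotsc,X_n]$ is an $E$-defined subspace and invoke minimality of the hull) matches the paper's. The one place where you genuinely diverge is the delicate step you yourself flag, closure of $I^E$ under multiplication. You fix a basis $(c_\nu)$ of $K$ over $E$, use uniqueness of the decomposition $f=\sum_\nu c_\nu f_\nu$ to see that multiplication by a monomial commutes with taking $E$-components, and conclude from the explicit description of $I^E$ as the span of all components of elements of $I$. The paper instead avoids any basis at this point: for each variable $X_j$ it observes that the preimage $\{f\mid X_jf\in I^E\}$ is an $E$-defined subspace containing $I$ (because $I$ is an ideal), hence contains $I^E$ by minimality of the hull. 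The two arguments encode the same underlying fact, namely that multiplication by $X_j$ is a $K$-linear map sending $E[X_1,\dotsc,X_n]$ into itself; the paper's version is slightly slicker in that it reuses the defining minimality of $I^E$ rather than its basis-dependent computation, while yours is more concrete and makes the spanning set of $I^E$ explicit, which is also what you need anyway to see that $I^E$ is generated by polynomials over $E$. Both are complete; there is no gap.
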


\begin{proof}
  An ideal $J$ generated by the polynomials $f_i$ in
  $E[X_1,\dotsc,X_n]$ is generated, as a vector space, by the products
  $X_jf_i$. Conversely, for each variable $X_j$, the vector space
  $\{f\mid X_jf\in I^E\}$ is $E$-defined and contains $I$. Thus it
  contains $I^E$, so $I^E$ is an ideal.
\end{proof}

\noindent If the ideal $I$ is generated by polynomials $f_i$, then the
union of all $f_i^E$ generates the ideal $I^E$. Note also that, if $I$
is \emph{homogeneous}, i.e.\ it is the sum of all
\[I_d=\{f\in I\mid h \text{ homogeneous of degree } d\},\] then so is
$I^E$, with
$(I^E)_d=(I_d)^E$.

From now on, consider a sufficiently saturated algebraically
closed differential field $(K, \delta)$, equipped with a non-trivial
derivation $\delta$. Denote its field of constants $\C_K$ by
$E$. For example, we may choose $(K,\delta)$ to be a saturated
model of $\mathsf{DCF}_0$, the elementary theory of differential
closed fields of characteristic zero.

\noindent Observe that the pair $(K,E)$ is a model of the theory
$\acfp_0$ of proper extensions of algebraically closed fields in
characteristic $0$. In order to show that this theory is equational,
it suffices to show, by Proposition \ref{P:Tame_Type}, that every
instance of a tame formula determines a Kolchin-closed set in
$(K,\delta)$. We first need some auxiliary lemmata on the differential
ideal associated to a system of polynomial equations.

\begin{lemma}
  Let $v$ be a vector in $K^n$. Then the $E$-hull of $v$ is generated
  by $v,\delta(v),\ldots,\delta^{n-1}(v)$.
\end{lemma}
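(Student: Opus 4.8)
The plan is to show that the $K$-subspace $W=\operatorname{span}_K\{v,\delta(v),\dots,\delta^{n-1}(v)\}$ equals $v^E$, proving the two inclusions separately. Since $E=\C_K$, every vector $e\in E^n$ is constant, i.e.\ $\delta(e)=0$. Using the explicit description of $v^E$ given after Definition \ref{D:Ehull}, write $v=\sum_{\nu}c_\nu e_\nu$ with $e_\nu\in E^n$ and $(c_\nu)$ a basis of $K$ over $E$; then by Leibniz and $\delta(e_\nu)=0$ one gets $\delta^j(v)=\sum_\nu \delta^j(c_\nu)\,e_\nu$ for all $j$, so each $\delta^j(v)$ lies in $\operatorname{span}_K\{e_\nu\mid\nu\}=v^E$. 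This already yields $W\subseteq v^E$.

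For the reverse inclusion, set $d=\dim_K v^E\leq n$ and choose a basis $e_1,\dots,e_d\in E^n$ of $v^E$ (possible since $v^E$ is $E$-defined). Write $v=\sum_{i=1}^d c_i e_i$ with $c_i\in K$. First I would check that $c_1,\dots,c_d$ are $E$-linearly independent: otherwise, after relabelling, $c_d=\sum_{i<d}\beta_i c_i$ with $\beta_i\in E$, and then $v=\sum_{i<d}c_i(e_i+\beta_i e_d)$ lies in the $E$-defined subspace spanned by the vectors $e_i+\beta_i e_d\in E^n$, which has dimension at most $d-1$. By minimality this would force $\dim_K v^E\leq d-1$, a contradiction.

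Now the Wronskian criterion does the work. Since $c_1,\dots,c_d$ are linearly independent over $\C_K=E$, Fact \ref{F:Wronski} gives $\W(c_1,\dots,c_d)\neq 0$. Expressing the vectors $v,\delta(v),\dots,\delta^{d-1}(v)$ in the basis $(e_i)$, their coordinate rows form exactly the Wronskian matrix $\bigl(\delta^{j}(c_i)\bigr)_{0\le j\le d-1,\,1\le i\le d}$, whose determinant is this nonzero Wronskian. Hence $v,\delta(v),\dots,\delta^{d-1}(v)$ are $K$-linearly independent; lying inside the $d$-dimensional space $v^E$, they span it. As $d-1\le n-1$, these vectors are among $v,\delta(v),\dots,\delta^{n-1}(v)$, so $v^E\subseteq W$, giving equality. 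I expect the only subtle point to be the $E$-linear independence of the coordinates $c_i$ (the minimality argument above); once that is in place, the reduction to the Wronskian via Fact \ref{F:Wronski} is immediate and is really the conceptual heart of the statement.
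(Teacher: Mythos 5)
Your proof is correct and follows essentially the same route as the paper's: both arguments rest on the observation that $\delta$ acts on the coordinates with respect to an $E$-basis of the hull, and both invoke Fact \ref{F:Wronski} together with the minimality of $v^E$ to see that the coefficients $c_1,\dots,c_d$ are $E$-linearly independent, hence that the first $\dim_K v^E$ iterates of $v$ are $K$-linearly independent. The only difference is that you run the Wronskian criterion in the direct direction while the paper argues by contradiction; the content is identical.
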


\begin{proof}
  Any $E$-defined subspace is clearly closed under $\delta$. Thus, we
  need only show the the subspace $V$ generated by
  $v,\delta(v),\ldots,\delta^{n-1}(v)$ is $E$-defined. Let $k\leq n$
  be minimal such that $v$ can be written as
  \[ v = a_1 e_1+\dotsb+a_ke_k\] for some elements $a_i$ in $K$ and
  vectors $e_i$ in $E^n$. Thus, the $e_i$'s are linearly independent
  and generate $v^E$. Hence $V \subset v^E$. If the dimension of $V$
  is strictly smaller than $k$, then
  $v,\delta(v),\ldots,\delta^{k-1}(v)$ are linearly dependent over
  $K$. The rows of the matrix
  \[
  \begin{pmatrix}
    a_1 & a_2 & \ldots & a_k \\
    \delta(a_1) &  \delta(a_2) & \ldots &  \delta(a_k) \\
    \vdots & & \vdots & \\
    \delta^{k-1}(a_1) &  \delta^{k-1}(a_2) & \ldots &
    \delta^{k-1}(a_k)
  \end{pmatrix} \]
  are thus linearly dependent over $K$. It follows from Fact
  \ref{F:Wronski} that $a_1,\ldots,a_k$ are linearly dependent over
  $E$. So there are $\xi_i$ in $E$, not all zero, such that
  $\xi_1a_1+\dotsb+\xi_ka_k=0$. The vector space

\[\Bigl\{ \sum_{i=1}^kb_ie_i \Bigm| \sum_{i=1}^k\xi_ib_i=0\Bigr\},\]
which contains $v$, has a basis from  $E^n$ and dimension strictly
smaller than $k$, contradicting the choice of the $e_i$'s.
\end{proof}

In order to apply the previous result, consider the derivation $D$ on
the polynomial ring $\K[X_1,\ldots, X_n]$ obtained by differentiating
the coefficients of a polynomial in $K$ (and setting $D(X_i)=0$, for
$1\leq i\leq n$). We say that an ideal $I$ of $\K[X_1,\ldots, X_n]$ is
\emph{differential} if it is closed under $D$.

\begin{cor}\label{C:diff_ideal}
  An ideal of $\K[X]$ is differential if and only if it can be
  generated by elements from $\E[X]$.
\end{cor}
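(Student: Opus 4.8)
The plan is to reduce the corollary to the single identity $I = I^E$ and then exploit the $E$-hull machinery just developed. By Lemma \ref{L:Ideal_E}, $I^E$ is the smallest ideal containing $I$ that is generated by elements of $\E[X]$; consequently $I$ is generated by elements of $\E[X]$ if and only if $I^E\subseteq I$, that is, if and only if $I=I^E$ (the inclusion $I\subseteq I^E$ always holds). So the statement to prove becomes: the ideal $I$ is differential, i.e.\ closed under $D$, if and only if $I=I^E$.

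The easy direction is that an ideal generated by elements of $\E[X]$ is differential. First I would observe that $D(f)=0$ for every $f\in\E[X]$, since $D$ differentiates only the coefficients and those lie in the constant field $\E=\C_K$. Then, writing an arbitrary $g\in I$ as $g=\sum_i h_i f_i$ with generators $f_i\in\E[X]$ and $h_i\in\K[X]$, the Leibniz rule gives $D(g)=\sum_i D(h_i)f_i+\sum_i h_i D(f_i)=\sum_i D(h_i)f_i\in I$, so $I$ is closed under $D$.

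For the converse, suppose $I$ is differential; I would show that $f^E\subseteq I$ for every $f\in I$. Since $I^E$ is the sum of the hulls $f^E$ over $f\in I$, this yields $I^E\subseteq I$ and hence $I=I^E$. The key point is that $D$ preserves the finite monomial support of a polynomial: because $D(X_i)=0$, all of the iterated derivatives $f, D(f), D^2(f),\dots$ lie in the finite-dimensional $\K$-subspace $\mathrm{span}_{\K}(M_1,\dots,M_N)\cong\K^N$ spanned by the monomials $M_1,\dots,M_N$ occurring in $f$. Viewing $f$ as a vector in this $\K^N$ and $D$ as the coordinatewise derivation, the preceding lemma shows that $f^E$ is generated over $\K$ by $f, D(f),\dots,D^{N-1}(f)$. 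As $I$ is differential, each of these lies in $I$, whence $f^E\subseteq I$, as required. Finally, $I=I^E$ is $E$-defined, so it has a spanning set drawn from $\E[X]$, which in particular generates it as an ideal.

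The only point requiring a moment's care is matching the hull lemma, stated for vectors in $\K^n$, to the polynomial setting: one must check that the $E$-hull of $f$ computed inside $\mathrm{span}_{\K}(M_1,\dots,M_N)$ coincides with the $E$-hull of $f$ inside $\K[X]$. This is transparent from the explicit description of $v^E$ recalled after Definition \ref{D:Ehull}: fixing a basis $(c_\nu)$ of $\K$ over $\E$ and writing $f=\sum_\nu c_\nu g_\nu$, each generator $g_\nu$ is an $\E$-combination of exactly the monomials $M_1,\dots,M_N$, so $f^E\subseteq\mathrm{span}_{\K}(M_1,\dots,M_N)$ automatically. Everything else is a routine unwinding of the definitions, and I do not expect any genuine obstacle beyond keeping this reduction to the finite-dimensional hull lemma clean.
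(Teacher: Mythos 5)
Your proof is correct and follows exactly the route the paper intends: the paper leaves the corollary unproved precisely because it is the application of the preceding hull lemma to the coefficient vector of a polynomial (via Lemma \ref{L:Ideal_E} and the reduction to $I=I^E$) that you carry out. Your added check that the $E$-hull of $f$ computed inside the $K$-span of its monomials agrees with its $E$-hull in $\K[X]$ is the right point to verify and is handled correctly.
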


\begin{cor}\label{C:bounds_hom}
  Given homogeneous polynomials $h_0,\dots,h_m$ in $\K[X]$ of a fixed
  degree $d$, there exists an integer $k$ in $\N$ (bounded only in
  terms of $d$ and the length of $X$) such that the ideal generated by
  $\{D^{j}(h_i)\}_{\substack{i\leq m \\ j < k}}$ is a differential and
  homogeneous ideal.
\end{cor}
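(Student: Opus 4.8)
The plan is to take $k$ to be the dimension $N=\binom{n+d-1}{d}$ of the space $\K[X]_d$ of homogeneous polynomials of degree $d$ in the $n=|X|$ variables, and to identify the ideal in the statement with the $E$-hull of the ideal generated by $h_0,\dots,h_m$. First I would note that the derivation $D$ preserves degree, since it merely differentiates coefficients and annihilates the variables; hence each $D^j(h_i)$ again lies in the $N$-dimensional space $\K[X]_d$, and in particular every element of the generating set $\{D^j(h_i)\}_{i\le m,\,j<k}$ is homogeneous of degree $d$.

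Next I would invoke the Lemma computing the $E$-hull of a single vector (stated just before Corollary \ref{C:diff_ideal}): viewing $h_i$ as a vector in $\K[X]_d$, which has dimension $N$, its $E$-hull is the $K$-span
\[ h_i^{E}=\operatorname{span}_K\{\,h_i,\,D(h_i),\,\dots,\,D^{N-1}(h_i)\,\}. \]
Writing $I_0=(h_0,\dots,h_m)$, the remark following Lemma \ref{L:Ideal_E} shows that $I_0^{E}$ is generated by $\bigcup_{i\le m} h_i^{E}$; since these subspaces are spanned over $K$ precisely by $\{D^j(h_i)\}_{i\le m,\,j<N}$, and an ideal generated by a set agrees with the ideal generated by its $K$-span, the choice $k=N$ makes the ideal in the statement equal to $I_0^{E}$.

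It remains to verify the two required properties of $I_0^{E}$. Homogeneity is immediate, because $I_0$ is homogeneous and hence so is $I_0^{E}$, again by the remark following Lemma \ref{L:Ideal_E}. For the differential property, Lemma \ref{L:Ideal_E} presents $I_0^{E}$ as an ideal generated by elements of $\E[X]$, so Corollary \ref{C:diff_ideal} yields at once that $I_0^{E}$ is a differential ideal. Finally $k=N=\binom{n+d-1}{d}$ depends only on $d$ and $n=|X|$, as demanded.

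The heart of the matter, and the sole place where the explicit bound enters, is the dimension count combined with the formula for the $E$-hull of a vector: because $h_i^{E}$ is $E$-defined it is automatically closed under $D$, so no derivative $D^j(h_i)$ with $j\ge N$ can contribute a new generator, and $N$ derivatives always suffice. I expect the only delicate point to be the passage from spanning the vector spaces $h_i^{E}$ to generating the ideal $I_0^{E}$; this is routine, since the ideal generated by a $K$-spanning set of a subspace coincides with the ideal generated by the subspace itself.
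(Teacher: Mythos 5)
Your proof is correct and follows exactly the route the paper intends (the corollary is stated without proof, as an immediate consequence of the preceding material): you identify the ideal with the $E$-hull of $(h_0,\dots,h_m)$, use the lemma that the $E$-hull of a vector in an $N$-dimensional space is spanned by its first $N$ iterated derivatives to get the explicit bound $k=\binom{n+d-1}{d}=\dim_K \K[X]_d$, and then invoke Lemma \ref{L:Ideal_E} together with Corollary \ref{C:diff_ideal} for the differential property and the remark on homogeneous ideals for homogeneity. No gaps.
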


We now have all the ingredients in order to show that tame formulae
are equations.

\begin{prop}\label{P:Kolchin}
  Let $\phi(x,y)$ be a tame formula. The definable set $\phi(x,b)$ is
  Kolchin-closed set in $(K,\delta)$.
\end{prop}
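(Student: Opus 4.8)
The plan is to convert the existential, projective condition defining $\phi(x,b)$ into a Zariski-closed condition on a finite system of homogeneous polynomials whose coefficients are \emph{differential} polynomials in $x$, and then to invoke the completeness of projective space. Write the tame formula as
\[
\phi(x,y)=\exists\,\zeta\in P^r\Bigl(\neg\zeta\doteq 0\;\land\;\Land_{j\le m}q_j(x,y,\zeta)\doteq 0\Bigr),
\]
with each $q_j\in\Z[X,Y,Z]$ homogeneous in $Z$, and recall that $P(K)=\C_K=E$. Fixing $b$ and a tuple $a$ of length $|x|$, let $I_a$ be the homogeneous ideal of $\K[Z]$ generated by the $q_j(a,b,Z)$. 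Then $\phi(a,b)$ holds precisely when the $q_j(a,b,Z)$ have a common nonzero zero in $E^r$, i.e.\ when the projective variety of $I_a$ carries an $E$-point.

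First I would trade this request for an $E$-point for a request for a $K$-point of an auxiliary system. By Corollary \ref{C:diff_ideal} the $E$-hull $I_a^E$ is exactly the smallest differential ideal containing $I_a$, that is, the ideal generated by all $D^l q_j(a,b,Z)$, where $D$ differentiates coefficients and $D(Z_i)=0$. Applying Corollary \ref{C:bounds_hom} to each $q_j$ (and taking the largest of the resulting bounds) yields an integer $k$, depending only on $r$ and the degrees of the $q_j$ and \emph{not} on $a$, such that $I_a^E$ is already generated by $\{D^l q_j(a,b,Z)\mid j\le m,\ l<k\}$. The key claim is then
\[
\phi(a,b)\ \text{holds}\quad\Longleftrightarrow\quad \text{the }D^l q_j(a,b,Z)\ (j\le m,\ l<k)\ \text{have a common nonzero zero in }\Proj^{r-1}(K).
\]
For the forward direction, if $\zeta\in E^r\setminus\{0\}$ kills every $q_j(a,b,Z)$ then, $\zeta$ being a tuple of constants, $D^l q_j(a,b,\zeta)=\delta^l\bigl(q_j(a,b,\zeta)\bigr)=0$ for all $l$, so $[\zeta]$ is the required zero. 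For the converse, a common nonzero zero in $K^r$ of the generators witnesses that the variety of $I_a^E$ is nonempty; but $I_a^E$ is generated over the algebraically closed field $E$, hence is defined over $E$ and so has an $E$-point $\zeta'\in E^r\setminus\{0\}$, which kills $I_a\subset I_a^E$ and thus satisfies all $q_j(a,b,\zeta')=0$, giving $\phi(a,b)$.

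It remains to read off that the right-hand side is Kolchin-closed in $a$. Since $D(Z_i)=0$, each $D^l q_j(a,b,Z)$ is homogeneous in $Z$, and its coefficients are $\delta^l$ applied to the coefficients of $q_j$, which are polynomials in $a,b$; hence these coefficients are differential polynomials in $a$. By the completeness of projective space (Remark \ref{R:proj_complete}), applied to the incidence set in (coefficient space)$\times\Proj^{r-1}$, the set of coefficient vectors for which the corresponding homogeneous system has a common nonzero projective zero is Zariski-closed; substituting the differential polynomials in $a$ for these coefficients turns it into a finite system of differential polynomial equations in $a$, whose solution set is the Kolchin-closed set defined by $\phi(x,b)$.

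The main obstacle is the displayed equivalence, and specifically its converse: one must pass from a $K$-point back to the sought $E$-point, which is exactly where the $E$-definability of $I_a^E$ (Corollary \ref{C:diff_ideal}) and the $a$-uniform bound $k$ (Corollary \ref{C:bounds_hom}) do the essential work. Once these are in place the closedness step is routine, since completeness reduces ``nonempty projective fibre'' to a Zariski-closed, hence after substitution Kolchin-closed, condition.
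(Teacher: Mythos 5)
Your proposal is correct and follows essentially the same route as the paper: pass to the differential ($E$-hull) ideal generated by the $D^l q_j$ with the uniform bound $k$ from Corollary \ref{C:bounds_hom}, use that this ideal is generated over $E$ (equivalently, that $E\preceq K$ as pure fields) to replace the $E$-point condition by a $K$-point condition, and finish with completeness of projective varieties to get a Kolchin-closed set. The only differences are cosmetic (keeping $b$ explicit and invoking the Nullstellensatz over $E$ rather than elementarity of $E$ in $K$).
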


\begin{proof}

Suppose that

\[\phi(x,b)\;\;=\;\;\exists\, \zeta \in P^r \biggl(\neg\zeta\doteq
0\;
\land\; \Land_{i\leq m} q_i(x,\zeta)\doteq 0 \biggr),\]
\noindent for polynomials $q_j(X,Z)$ over $K$ homogeneous in $Z$ of
some fixed degree $d$.  Let $k$ be as in Corollary \ref{C:bounds_hom}.

\noindent For a tuple  $a$ in $K$ of length $|x|$, write

\[
    D^j(q_i(a,Z))= q_{i,j}(a,\dots,\delta^j(a),Z),\]

  \noindent for polynomials $q_{i,j}(X_0,\ldots, X_k, Z)$ over $K$,
  homogeneous in $Z$. By Corollary \ref{C:bounds_hom}, the ideal
  $I(a,Z)$ generated by
 \[\{q_{i,j}(a,\dots,\delta^j(a), Z)\}_{\substack{i< m \\ j<k }} \]

\noindent has a generating set consisting of
homogeneous polynomials

\[g_1(Z),\ldots,g_s(Z)\]

\noindent with coefficients in $\E[Z]$.

Now, since $\zeta$ ranges over the constant field,  the tuple $a$
realises $\phi(x,b)$ if and only if

\[(K,E)\models\;\;\exists\, \zeta \in P^r \biggl(\neg\zeta\doteq
0\;
\land\; I(a,\zeta)\doteq 0 \biggr),\]

\noindent  which is equivalent to

\[(K,E)\models\;\;\exists\, \zeta \in P^r \biggl(\neg\zeta\doteq
0\;
\land\; \Land_{i\leq s}g_i(\zeta)\doteq 0 \biggr),\]

\noindent The field $E$ is an elementary substructure of $K$, so the
above is equivalent to
\[K\models\;\;\exists\,\zeta\;\biggl(\neg\zeta\doteq
0\;
\land\; \Land_{i<s}g_i(\zeta)\doteq 0 \biggr),\]
which is again equivalent to
\[K\models\;\;\exists\, \zeta \biggl(\neg\zeta\doteq
0\;
\land\; I(a,\zeta)\doteq 0 \biggr).\]

\noindent Since $I(a,Z)$ is homogeneous, the Zariski-closed set it
determines is complete, hence its projection is given by a finite
number of equations $X(a,\ldots,\delta^{k-1}(a))$. Thus, the tuple $a$
realises $\phi(x,b)$ holds if and only if

\[ (K,\delta)\models X(a,\dots,\delta^{k-1}(a)),\]

\noindent which clearly describe a Kolchin-closed set, as desired.
\end{proof}

By Corollary \ref{C:Tame_BK}, we conclude the following:

\begin{cor}\label{C:eq_0}
  The theory $\acfp_0$ of proper pairs of algebraically closed fields
  of characteristic $0$ is equational.
\end{cor}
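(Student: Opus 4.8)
The plan is to combine the reduction to tame formulae furnished by Corollary \ref{C:Tame_BK} with the geometric content of Proposition \ref{P:Kolchin}. By Corollary \ref{C:Tame_BK}, every $\LL_P$-formula is, modulo $\acfp_0$, a Boolean combination of tame formulae, so it suffices to prove that each tame formula $\phi(x;y)$ is an equation. First I would place myself in the sufficiently saturated model $(K,E)$ with $E=\C_K$ fixed at the start of this subsection; there Proposition \ref{P:Kolchin} guarantees that every instance $\phi(x,b)$ defines a Kolchin-closed subset of the differential field $(K,\delta)$.

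Next I would observe that the family of finite intersections of instances of $\phi(x;y)$ is a subfamily of the Kolchin-closed subsets of $K^{|x|}$, since a finite intersection of Kolchin-closed sets is again Kolchin-closed. In characteristic $0$ the Kolchin topology is Noetherian by the Ritt--Raudenbush basis theorem, exactly as invoked in Lemma \ref{L:eq_allchar_delta}, so all of its closed sets satisfy the descending chain condition. Consequently the family of finite intersections of instances of $\phi$ has the DCC, which is precisely the assertion that $\phi(x;y)$ is an equation. Since every formula is a Boolean combination of such equations, this yields that $\acfp_0$ is equational.

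The step I expect to require the most care is the passage from Kolchin-closedness in the particular model $(K,E)$ to the statement that $\phi$ is an equation of the complete theory $\acfp_0$. This is harmless: the theory is complete once the characteristic is fixed by Fact \ref{F:Kiesler}, and $(K,E)$ is sufficiently saturated, so the DCC verified here is the DCC of the theory. Should one prefer to sidestep any saturation bookkeeping, the same conclusion follows through Lemma \ref{L:Srour}(\ref{L:Srour_fteset}): Noetherianity forces the intersection $\bigcap_{b\in B}\phi(K,b)$ to coincide with a finite subintersection $\bigcap_{b\in B_0}\phi(K,b)$ over some finite $B_0\subseteq B$, which is exactly the finite-generation condition characterising equations.
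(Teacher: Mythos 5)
Your proposal is correct and follows essentially the same route as the paper: reduce to tame formulae via Corollary \ref{C:Tame_BK}, use Proposition \ref{P:Kolchin} to see that instances of tame formulae are Kolchin-closed, and invoke the noetherianity of the Kolchin topology (Ritt--Raudenbush) to get the descending chain condition, exactly as the paper does in the corollary on t-tame sets immediately following. Your remark on transferring the DCC from the particular saturated differential model to the complete theory $\acfp_0$ is a sensible piece of bookkeeping that the paper leaves implicit.
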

A definable set $\{a\in K^n\mid(K,E)\models\phi(a,b)\}$ is
\emph{t-tame}, if $\phi$ is tame, for some $b$ a tuple in $K$.
\begin{cor}
  In models of $\acfp_0$, the family of t-tame sets has the DCC.
\end{cor}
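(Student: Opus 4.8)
The plan is to deduce the statement from the single nontrivial input already available in this section, Proposition \ref{P:Kolchin}, together with the noetherianity of the Kolchin topology recalled in the proof of Lemma \ref{L:eq_allchar_delta}. First I would unwind the definition: a t-tame set is by definition a set $\{a\mid(K,E)\models\phi(a,b)\}$ for some tame formula $\phi$ and some parameter $b$, so the family of t-tame sets is precisely the family of all instances of tame formulae. Since tame formulae are closed under conjunction (Corollary \ref{C:conj_tame}), this family is moreover closed under finite intersections, and the asserted DCC is exactly the statement that it contains no strictly descending $\omega$-chain.

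The core of the argument is then immediate in the working model. Working inside the sufficiently saturated differential field $(K,\delta)$ of this section, with $E=\C_K$, Proposition \ref{P:Kolchin} shows that every instance $\phi(K,b)$ of a tame formula is a Kolchin-closed subset of $(K,\delta)$. Hence the family of t-tame sets is a subfamily of the family of all Kolchin-closed sets. By Ritt--Raudenbush the Kolchin topology is noetherian, that is, every descending chain of Kolchin-closed sets is eventually constant; a descending chain of t-tame sets is in particular such a chain and therefore stabilises. This establishes the DCC in the model $(K,E)$.

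Finally I would promote the DCC from this particular differential model to all models of $\acfp_0$. Two facts combine here. First, a strict inclusion $\phi(x,b)\subsetneq\psi(x,c)$ between instances is a first-order condition on the parameters $b,c$, hence preserved and reflected between a model and its elementary extensions; so a descending chain in a model $M$ lifts to one in any $N\succ M$, and DCC in $N$ forces DCC in $M$. Second, every $M\models\acfp_0$ admits an elementary extension of the differential form $(K,\C_K)$ with $K\models\mathsf{DCF}_0$: since $\acfp_0$ is complete once the characteristic is fixed (Fact \ref{F:Kiesler}), $M$ is elementarily equivalent to the $\LL_P$-reduct of a saturated model of $\mathsf{DCF}_0$, and a compactness argument (realising, in such a saturated differential field, the finite $\LL_P$-types occurring in the elementary diagram of $M$) produces the desired $N\succ M$ carrying a derivation with constant field $P(N)$. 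In any such $N$ the Kolchin topology is noetherian and Proposition \ref{P:Kolchin} applies verbatim, so DCC holds in $N$ and hence in $M$. I expect this transfer step --- in particular the production of the differential elementary extension --- to be the only real obstacle: a descending chain may involve tame formulae of unbounded complexity, so the DCC cannot be read off formula-by-formula from equationality (Corollary \ref{C:eq_0}) alone and genuinely relies on the ambient noetherianity of the Kolchin topology in a model realising enough types.
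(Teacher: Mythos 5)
Your proof is correct and follows the same route as the paper, whose entire proof is the one-line observation that the Kolchin topology is noetherian by Ritt--Raudenbush (with Proposition \ref{P:Kolchin} supplying that t-tame sets are Kolchin-closed). The only difference is that you carefully spell out the transfer from the particular differential model $(K,\C_K)$ to an arbitrary model of $\acfp_0$ via completeness, saturation of reducts and the first-order expressibility of strict inclusion --- a step the paper leaves implicit but which is handled exactly as you describe.
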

\begin{proof}
  The Kolchin topology is noetherian, by Ritt-Raudenbush's Theorem.
\end{proof}
\begin{question}
  Do  t-tame sets have the DCC in arbitrary characteristic?
\end{question}


\section{Appendix: Linear Formulae}\label{S:LinFor}

A stronger relative quantifier elimination was provided in
\cite[Theorem 1.1]{aG16}, , which yields a nicer description of the
equations to consider in the theory $\acfp_0$. We will provide an
alternative approach to G\"unayd\i n's result, valid in arbitrary
characteristic. We work inside a sufficiently saturated model $(K,E)$
of $\acfp$.

A tame formula $\phi(x)$ (cf. Definition \ref{D:tame_formel})
is \emph{linear} if the corresponding polynomials in $\phi$ are
linear in $Z$, that is, if there is a matrix $(q_{i,j}(X))$ of
polynomials with integer coefficients such that

\[\phi(x)\;\;=\;\;\exists \zeta\in P^s\; \left(\neg\zeta\doteq
  0\; \land\;
\Land_{j=1}^k \zeta_1q_{1,j}(x)+\dotsb+\zeta_sq_{s,j}(x)\doteq0\right).\]

\noindent A linear formula is \emph{simple} if $k=1$, that is, if it has
the form
\[\dep_s(q_1(x),\dotsc,q_s(x)),\]
 for polynomials $q_i$ in $Z[X_1,\ldots, X_n]$.

 We will show that every tame formula is equivalent in $\acfp$ to a
 conjunction of simple linear formulae. We first start with an easy
 observation.

\begin{lemma}\label{L:linear}
  Every tame formula is equivalent in $\acfp$ to a linear tame formula.
\end{lemma}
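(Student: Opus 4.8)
The plan is to reduce the degree in $Z$ of the homogeneous polynomials $q_j$ defining the tame formula, the case where all $q_j$ are already linear being, by definition, a linear tame formula. The key manoeuvre is a linearisation of Veronese type: replacing the tuple $\zeta\in P^r$ by the tuple $w$ of all its monomials of the relevant degree $d$, each homogeneous $q_j(x,\zeta)=\sum_\alpha q_{j,\alpha}(x)\,\zeta^\alpha$ becomes a form $\ell_j(x,w)=\sum_\alpha q_{j,\alpha}(x)\,w_\alpha$ which is \emph{linear} in the new variable $w$. The price to pay is that $w$ must be a genuine tuple of monomials of a single $\zeta$, so one has to adjoin the incidence relations cutting out the image of this embedding; these are homogeneous of degree $2$ and do not involve $x$. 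Hence a single such step rewrites any tame formula as a tame formula all of whose defining polynomials have degree at most $2$, and it suffices to treat the quadratic case.

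For the quadratic case I would first polarise, in a way valid in every characteristic: writing each monomial $\zeta_k\zeta_l$ occurring in $q_j$ as $\upsilon_k\zeta_l$ yields a bilinear form $\widehat q_j(x,\upsilon,\zeta)$ with $\widehat q_j(x,\zeta,\zeta)=q_j(x,\zeta)$, while the requirement that $\upsilon$ and $\zeta$ be proportional is expressed by the bilinear equations $\upsilon_i\zeta_k-\upsilon_k\zeta_i\doteq 0$. All the resulting polynomials are homogeneous in $\upsilon$ and in $\zeta$ separately, so Lemma \ref{L:segre} applies and eliminates one of the two projective blocks while keeping us inside the class of tame formulae, and Corollary \ref{C:conj_tame} lets us assemble the conjuncts. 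To pass from the remaining quadratic (incidence) data to genuinely linear equations, I would then project out the existential $P$-variable by means of completeness of projective varieties (Remark \ref{R:proj_complete}), exactly as in the proof of Lemma \ref{L:tame_vble_in_P}, and read off the output as $E$-linear dependences among polynomials in $x$; it is essential here that $E$ is an algebraically closed elementary substructure, so that solvability over $E$, rather than over the ambient field $\mathbb U$, is what is being recorded.

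The main obstacle is precisely this last reduction, namely replacing the degree-$2$ (rank-one) incidence relations by linear ones. No purely formal tensor manipulation will do this: an application of Lemma \ref{L:segre}, or a further passage to monomials, only preserves the total degree and never lowers it to $1$. Over the ambient algebraically closed field the relevant intersections are always nonempty, so the condition would be vacuous; the entire content sits in the discrepancy between $E$-points and $\mathbb U$-points. Thus the completeness argument of Remark \ref{R:proj_complete} together with the algebraic closedness of $E$ must be invoked in an essential way to carry out the quadratic case, after which Corollary \ref{C:conj_tame} packages the conjunction into a single linear tame formula as required.
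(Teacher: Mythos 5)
Your Veronese reduction to polynomials of degree at most $2$ in $\zeta$ is sound as far as it goes, but the proof stops exactly where the work has to be done. You correctly observe that no formal substitution (Segre, Veronese, polarisation) can lower the degree to $1$ --- your polarisation step even reintroduces quadratic proportionality relations $\upsilon_i\zeta_k-\upsilon_k\zeta_i\doteq 0$ --- and you then appeal to ``the completeness argument of Remark \ref{R:proj_complete} together with the algebraic closedness of $E$'' to finish the quadratic case. But completeness is not applicable here. In Lemma \ref{L:tame_vble_in_P} the free variable $x$ also ranges over $P$, so everything happens inside the algebraically closed field $E$ and the projection of a projective variety defined over $E$ is again Zariski-closed over $E$. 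In the present situation the coefficients $q_{j,\alpha}(x)$ lie in $K$, the solution variety is defined over $K$, and the formula asks for an $E$-point of it; as you yourself note, over $K$ the system always has a nontrivial zero, so there is no $E$-defined projective variety whose points you could be projecting. The step ``read off the output as $E$-linear dependences among polynomials in $x$'' is precisely the content of the lemma, and it is nowhere justified.

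The missing idea in the paper is linear-algebraic rather than geometric: the $E$-hull (Definition \ref{D:Ehull}, Lemma \ref{L:Ideal_E}). A nontrivial $\zeta\in E^r$ annihilates all the $q_j(a,Z)$ if and only if it annihilates the $E$-hull $I(a,Z)^E$ of the ideal they generate; since $I(a,Z)^E$ is generated by polynomials over $E$ and $E\preceq K$, the projective Nullstellensatz converts ``no nontrivial zero'' into ``$(I(a,Z)^E)_d=K[Z]_d$'' for a degree $d$ chosen uniformly in $a$. The graded piece $I(a,Z)_d$ is spanned by finitely many explicit vectors $f_j=\sum_i M_i\,r_{i,j}(a)$ of $K[Z]_d$, and its $E$-hull is a proper subspace if and only if some nonzero $\xi\in E^s$ satisfies $\sum_i\xi_i r_{i,j}(a)=0$ for all $j$ --- which is exactly a linear tame formula, in any degree and with no need for a preliminary reduction to degree $2$. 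To salvage your outline you would have to replace the appeal to completeness by this hull/annihilator argument (or an equivalent one); as written, the proposal has a genuine gap at its central step.
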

\begin{proof}
  Consider a tame formula
  \[
  \phi(x )\;\;=\;\;\exists\, \zeta \in P^r \biggl(\neg\zeta\doteq
  0\; \land\; \Land_{j\leq m} q_j(x,\zeta)\doteq 0 \biggr).\]

  \noindent
  Denote by $Z$ the tuple of variables
  $(Z_1,\ldots, Z_{\text{length}(\zeta)})$. For a tuple $a$ in $K$ of
  length $|x|$, denote by $I(a,Z)$ the ideal in $K[Z]$ generated by
  $q_1(a,Z),\dotsc q_m(a,Z)$. Recall the definition of the $E$-hull
  $I(a,Z)^E$ of $I(a,Z)$ (Definition \ref{D:Ehull}). Since
  $I(a,Z)\subset I(a,Z)^E$, a zero of $I(a,Z)^E$ is a zero of
  $I(a,Z)$. A relative converse holds: If the tuple
  $\zeta$ in $E^r$ is a zero of the ideal $I(a,Z)$, then $I(a,Z)$ is contained
  in the ideal generated by all $Z_i-\zeta_i$'s, which is $E$-defined,
  so $\zeta$ is a zero of $I(a,Z)^E$. As in the proof of Proposition
  \ref{P:Kolchin}, we conclude that $(K,E)\models\phi(a)$ if an only
  if $I^E(a,Z)$ has a non-trivial zero in $K^r$.

  \noindent The ideal $I(a,Z)^E$ is generated by polynomials from
  $q_j(a,Z)^E$.  In particular, there is a degree $d$, independent
  from $a$, such that $I^E(a,Z)$ has a non-trivial zero if and only if
  the $E$-hull $(I(a,Z)^E)_d$ of $I(a,Z)_d$ is not all of $K[Z]_d$. As
  a vector space, the ideal $I(a,Z)_d$ is generated by all products
  $M\cdot q_j(a,Z)$, with $M$ a monomial in $Z$ such that
  $\deg(M)+\deg_Z(q_j(X,Z))=d$. Given an enumeration $M_1,\dotsc,M_s$
  of all monomials in $Z$ of degree $d$, the vector space $I(a,Z)_d$
  is generated by a sequence of polynomials $f_1,\dotsc,f_k$ of the
  form
  \[f_j=M_1r_{1,j}(a)+\dotsb+M_sr_{s,j}(a),\] for
  polynomials $r_{i,j}(X)\in\Z[X]$ which do not depend of $a$. Thus,
  the tuple $a$ realises $\phi(x)$ if and only if
  $(I(a,Z)^E)_d\not=K[Z]_d$, that is, if and only if there is a tuple
  $\xi\in E^s\setminus 0$ such that $
  \xi_1r_{1,j}(a)+\dotsb+\xi_sr_{s,j}(a)=0$ for all
  $j=1,\dotsc,k$. The latter is expressible by a linear formula.
\end{proof}

In order to show that every tame formula is equivalent to a
conjunction of simple linear formulae, we need the following result:

\begin{prop}\label{P:simple}
  For all natural numbers $m$ and $n$, there is a natural number $N$
  and an $n\times N$-matrix
  $(r_{j,k})$ of polynomials from $\Z[x_{1,1},\dotsc,x_{m,n}]$ such
  that the linear formula
  \begin{equation}\label{linpaar}
    \;\;\exists\, \zeta \in P^r \left(\neg\zeta\doteq
  0\; \land\;  \Land_{j=1}^n
    \zeta_1x_{1,j}+\dotsb+\zeta_mx_{m,j}\doteq0 \right).
  \end{equation}
  is equivalent in $\mathrm{\acfp}$  to the conjunction of
  \begin{align}\label{minoren}
    &\Land_{j_1<\dotsb<j_m}\det((x_{i,j_{i'}}))\doteq
    0\\
      \intertext{and}
      \label{reduktion}
      &\Land_{k=1}^N\dep_m\bigl(\sum_{j=1}^nx_{1,j}r_{j,k}(\bar
      x),\dotsc,\sum_{j=1}^nx_{m,j}r_{j,k}(\bar x)\bigr).
  \end{align}
\end{prop}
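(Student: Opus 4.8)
The plan is to translate all three formulae into linear-algebra statements about the column space of the matrix $X=(x_{i,j})$ and its $E$-hull, and then to isolate the genuine content in a single genericity argument. Write $v_1,\dots,v_m\in K^n$ for the rows and $w_1,\dots,w_n\in K^m$ for the columns of $X$, and let $V=\operatorname{span}_K(w_1,\dots,w_n)\subseteq K^m$ be the column space. A tuple $\zeta\in E^m$ satisfies the system inside \eqref{linpaar} exactly when $\zeta\cdot w_j=0$ for all $j$, that is, when $\zeta$ is an $E$-rational vector orthogonal to $V$; hence \eqref{linpaar} asserts that the rows $v_1,\dots,v_m$ are $E$-linearly dependent, equivalently, in the language of the $E$-hull of Definition \ref{D:Ehull}, that $V^{E}$ is a proper subspace of $K^m$. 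Likewise \eqref{minoren} says $\dim_K V<m$, i.e.\ $V\neq K^m$, which is necessary since $V\subseteq V^{E}$; and the $k$-th conjunct of \eqref{reduktion} says that the coordinates of $Xc_k\in V$ are $E$-linearly dependent, where $c_k=(r_{1,k},\dots,r_{n,k})$, i.e.\ that $(Xc_k)^{E}\neq K^m$. With this dictionary the implication from \eqref{linpaar} to the conjunction of \eqref{minoren} and \eqref{reduktion} is immediate and holds for \emph{any} choice of the $r_{j,k}$: a single witness $\zeta$ with $\sum_i\zeta_i v_i=0$ forces every $m\times m$ minor to vanish and satisfies $\sum_i\zeta_i(Xc_k)_i=\bigl(\sum_i\zeta_i v_i\bigr)\cdot c_k=0$ for each $k$.

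The heart of the matter is the converse, which I would prove by contraposition: assuming $V^{E}=K^m$ (the rows are $E$-independent) together with $\dim_K V<m$, I must exhibit one of the prescribed $c_k$ with $Xc_k$ having $E$-independent coordinates. The \emph{existence} of a good combination is clean. If $a\in K^n$ has coordinates algebraically independent over $E(x_{i,j}\mid i,j)$, then $v_1\cdot a,\dots,v_m\cdot a$ are $E$-linearly independent, because a relation $\sum_i\zeta_i(v_i\cdot a)=0$ with $\zeta\in E^m$ rewrites as $\sum_j\bigl(\sum_i\zeta_i x_{i,j}\bigr)a_j=0$, and algebraic independence of the $a_j$ over $E(x_{i,j})$ forces $\sum_i\zeta_i x_{i,j}=0$ for all $j$, i.e.\ $\sum_i\zeta_i v_i=0$ and $\zeta=0$. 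Thus $V$ always contains a vector with $E$-independent coordinates once the rows are $E$-independent; here $\dim_K V<m$ guarantees that not all $x_{i,j}$ lie in $E$ (otherwise the $K$-dependent rows would already be $E$-dependent), so the transcendence needed for the generic coordinates is genuinely present.

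The main obstacle is to replace this generic $a$, which depends on $X$ transcendentally, by \emph{finitely many polynomial} test vectors $c_k(X)$ that succeed simultaneously for every $E$-independent configuration. I would fix the rank $\rho=\dim_K V$ with $1\le\rho\le m-1$ and, via a maximal non-vanishing minor $\det X[I_0,J_0]\neq0$, single out a $K$-basis $\{w_j\mid j\in J_0\}$ of $V$; on this chart the coefficient vectors $a\in K^{J_0}$ for which $\sum_{j\in J_0}a_jw_j$ has $E$-dependent coordinates form a union of $E$-rational hyperplanes, \emph{all with non-zero normals}, since the projection of the row space to the coordinates in $J_0$ is injective — which is exactly what $\det X[I_0,J_0]\neq0$ encodes. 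One then aims to produce a good combination with Cramer/cofactor coefficients that are polynomials in the entries of $X$, and letting $(I_0,J_0)$ range over the finitely many index pairs and $\rho$ over $1,\dots,m-1$ yields the list $(r_{j,k})$ and the bound $N$. The delicate step, which I expect to be the hardest, is the \emph{uniformity}: a fixed finite family of polynomial sections must avoid infinitely many $E$-rational hyperplanes at once. I would secure this by an induction on $m$ that peels off one column at a time through these cofactors, the base case $n=1$ being the tautology that \eqref{linpaar} is itself the simple formula $\dep_m(x_{1,1},\dots,x_{m,1})$.
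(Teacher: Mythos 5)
Your translation into column spaces and $E$-hulls is correct, and your proof of the implication from \eqref{linpaar} to \eqref{minoren} and \eqref{reduktion} (for arbitrary $r_{j,k}$) is exactly the paper's. The gap is in the converse, and it sits precisely where you flag it: you never produce the finite family $(r_{j,k})$. Your generic argument only shows that a coefficient vector $a$ with coordinates algebraically independent over $E(x_{i,j})$ yields $E$-independent coordinates of $Xa$; such an $a$ is transcendental in the entries of $X$ and gives no polynomial witness. The route you then sketch (cofactor sections on a chart given by a maximal non-vanishing minor, plus an unspecified induction) does not resolve the uniformity problem of making a fixed finite family of polynomial sections avoid, for every admissible $X$ simultaneously, the union of $E$-rational hyperplanes $\{a: (\zeta^{T}X)\cdot a=0\}$ with $\zeta\in E^m\setminus 0$ --- and you say so yourself. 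Note that the paper explicitly leaves open whether $N$ and the $r_{j,k}$ can be computed, so the constructive route you are attempting is likely strictly harder than the statement.

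The paper closes the argument with two ideas absent from your proposal. First, compactness: it suffices to show that \eqref{linpaar} follows from \eqref{minoren} together with the \emph{infinite} conjunction of the formulae \eqref{reduktion} taken over all tuples $r_1,\dotsc,r_n\in\Z[\bar x]$; the finite subfamily is then extracted automatically and never exhibited. Second, for a fixed matrix $A$ whose entries do not all lie in $E$ (the case $A$ over $E$ being settled directly by \eqref{minoren}), let $R$ be the ring generated by the entries; Noether normalisation yields a non-zero non-unit $z$ of $E[R]$, and Krull's intersection theorem lets one choose powers $z_j=z^{N_j}=r_j(\bar a)$ so that the subspaces $V_1z_1,\dotsc,V_nz_n$ are independent, where $V_j$ is the $E$-span of the $j$-th column of $A$. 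The dependence relation $\sum_j\bigl(\sum_i\zeta_ia_{i,j}\bigr)z_j=0$ supplied by the infinite conjunction for this choice of the $r_j$ then splits into one relation per column, forcing $\sum_i\zeta_ia_{i,j}=0$ for every $j$, i.e.\ \eqref{linpaar}. Without something playing the role of these two steps, your argument does not close.
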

\begin{proof}
  The implication
  $\eqref{linpaar} \Rightarrow \left( \eqref{minoren} \land
  \eqref{reduktion} \right)$ always holds, regardless
  of the choice of the polynomials $r_{j,k}$: Whenever a
  matrix $A=(a_{i,k})$ over $K$ is such that there is a non-trivial
  vector  $\zeta$ in  $E^m$ with \[ \Land_{j=1}^n
  \sum_{i=1}^m\zeta_ia_{i,j}=0,\]

  \noindent then the  rows of $A$ are linearly dependent, so
  $\det((a_{i,j_{i'}}))=0$ for all $j_1<\dotsb<j_m$. For all $k$,
  we have that
  \[\sum_{i=1}^m\zeta_i\bigl(\sum_{j=1}^na_{i,j}r_{j,k}(\bar a)\bigr)=
  \sum_{j=1}^n\bigl(\sum_{i=1}^m\zeta_ia_{i,j}\bigr)r_{j,k}(\bar a)=0.\]

\noindent For the converse, an easy compactness argument yields the
existence of the polynomials $r_{j,k}$, once we show that
$\eqref{linpaar}$ follows from $\eqref{minoren} $ together with the
infinite conjunction
  \begin{equation}\label{infreduktion}
    \Land_{r_1,\dotsc r_n\in\Z[\bar
        x]}\dep_m\bigl(\sum_{j=1}^nx_{1,j}r_j(\bar
    x),\dotsc,\sum_{j=1}^nx_{m,j}r_j(\bar x)\bigr).
  \end{equation}

  \noindent Hence, let $A=(a_{i,k})$ be a matrix over $K$ witnessing
  \eqref{minoren} and \eqref{infreduktion}. The rows of $A$ are
  $K$-linearly dependent, by \eqref{minoren}. If the matrix were
  defined over $E$, its rows would then be $E$-linearly dependent,
  which yields \eqref{linpaar}. Thus, if we $R$ is the subring of $K$
  generated by the entries of $A$, we may assume that the ring
  extension $E\subset E[R]$ is proper.

\begin{claim}\label{C:nonunit} There is a non-zero element $r$ in $R$
  which  is not a unit in $E[R]$.
\end{claim}

\begin{claimproof} The field $E(R)$ has transcendence degree
  $\tau\geq 1$ over $E$. As in the proof of Noether's Normalisation
  Theorem \cite[Theorem X 4.1]{sL84}, there is a transcendence basis
  $r_1,\dotsc,r_\tau$ of $R$ over $E$, such that $E[R]$ is an integral
  extension of $E[r_1,\dotsc,r_\tau]$. If $r_1$ were a unit in $E[R]$,
  its inverse would $u$ be a root of a polynomial with coefficients in
  $E[r_1,\dotsc,r_\tau]$ and leading coefficient $1$. Multiplying by a
  suitable power of $r_1$, we obtain a non-trivial polynomial relation
  among the $r_j's$, which is a contradiction.
\end{claimproof}

\begin{claim}\label{C:seq_ind} Given a sequence  $V_1,\dotsc V_n$ of
    finite dimensional $E$-subvector spaces of $E[R]$,  there is a
    sequence $z_1,\dotsc,z_n$ of non-zero elements of $R$ such that
    the subspaces $V_1z_1,\dotsc,V_nz_n$ are independent.
\end{claim}

\begin{claimproof} Assume that $z_1,\dotsc,z_{k-1}$ have been already
  constructed. Let $z$ be as in Claim \ref{C:nonunit}. If we consider
  the sequence of ideals $z^k E[R]$, an easy case of Krull's Intersection
  Theorem (\cite[Theorem VI 7.6]{sL84}) applied to the noetherian integral
  domain $E[R]$ yields that
\[ 0=\bigcap\limits_{k\in \N} z^kE[R].\]

\noindent Choose some natural number $N_k$ large enough such that
\[(V_1z_1+\dotsb+V_{k-1}z_{k-1})\cap z^{N_k} E[R]=0,\] and set
$z_k=z^{N_k}$.
\end{claimproof}

\noindent Let us now prove that the matrix $A$ satisfies
\eqref{linpaar}. Let $V_j$ be the $E$-vector space generated by
$a_{1,j},\dotsc,a_{m,j}$, that is, by the $j$-th column of $A$. Choose
$0\neq z_j$ in $R$ as in Claim \ref{C:seq_ind}, and write each
$z_j=r_j(\bar a)$, for some polynomial $r_j(\bar x)$ with integer
coefficients. Since $A$ satisfies \eqref{infreduktion}, there is a
non-trivial tuple $\zeta$ in $E^m$ such that
  \[\sum_{i=1}^m\zeta_i\bigl(\sum_{j=1}^na_{i,j}z_j\bigr)=
  \sum_{j=1}^n\bigl(\sum_{i=1}^m\zeta_ia_{i,j}\bigr)z_j=0.\]

\noindent Observe that $\bigl(\sum_{i=1}^m\zeta_ia_{i,j}\bigr)z_j$
belongs to $V_jz_j$. The subspaces $V_iz_1,\ldots, V_nz_n$ are
independent, so each $\bigl(\sum_{i=1}^m\zeta_ia_{i,j}\bigr)z_j$ must
equal $0$. Therefore so is \[ \sum_{i=1}^m\zeta_ia_{i,j} =0,\]

\noindent as desired.
\end{proof}
\begin{question}
  Can the integer $N$ and the polynomials $r_{i,j}$ in Proposition
  \ref{P:simple} be explicitly computed?
\end{question}
\begin{theorem}\label{T:simple}
  Every tame formula is equivalent in $\acfp$ to a conjunction of simple
  linear formulae.
\end{theorem}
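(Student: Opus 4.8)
The plan is to read the statement off from Lemma~\ref{L:linear} and Proposition~\ref{P:simple}, the only additional ingredient being the observation that a single polynomial equation is itself a simple linear formula. First I would apply Lemma~\ref{L:linear} to replace the given tame formula by an equivalent linear tame formula
\[
\phi(x)\;=\;\exists\,\zeta\in P^s\Bigl(\neg\zeta\doteq 0\;\land\;\Land_{j=1}^{k}\zeta_1 q_{1,j}(x)+\dotsb+\zeta_s q_{s,j}(x)\doteq 0\Bigr),
\]
for suitable polynomials $q_{i,j}\in\Z[X]$. This is exactly an instance of the formula \eqref{linpaar} of Proposition~\ref{P:simple}, with $m=s$ and $n=k$, once the free variables $x_{i,j}$ are specialised to the polynomials $q_{i,j}(x)$.

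Since the equivalence asserted by Proposition~\ref{P:simple} is a theorem of $\acfp$ in the free variables $x_{i,j}$, I may substitute $q_{i,j}(x)$ for $x_{i,j}$ throughout and conclude that $\phi(x)$ is equivalent in $\acfp$ to the conjunction of the specialised versions of \eqref{minoren} and \eqref{reduktion}. The conjuncts coming from \eqref{reduktion} are of the shape $\dep_m(\dotsc)$ with entries polynomial in $x$, so they are already simple linear formulae by definition and require no further work.

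It then remains to treat the conjuncts coming from \eqref{minoren}, each of which, after specialisation, is a polynomial equation $\det\bigl((q_{i,j_{i'}}(x))\bigr)\doteq 0$. The key small observation is that any polynomial equation $q(x)\doteq 0$ is the simple linear formula $\dep_1(q(x))$: indeed $\dep_1(q(x))$ asserts the existence of a nonzero $\zeta$ in $E$ with $\zeta\cdot q(x)=0$, which over the field $E$ holds precisely when $q(x)=0$. Rewriting each determinantal condition in this way turns \eqref{minoren} into a conjunction of simple linear formulae as well, and combining the two families completes the proof.

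I expect no serious obstacle here, since the substance has already been packaged into Lemma~\ref{L:linear} and Proposition~\ref{P:simple}. The only points that demand care are the legitimacy of substituting the coefficient polynomials $q_{i,j}(x)$ into the universally valid equivalence of Proposition~\ref{P:simple}, and the otherwise routine reformulation of the minor equations \eqref{minoren} as $\dep_1$-instances, so that they too qualify as simple linear formulae.
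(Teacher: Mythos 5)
Your proposal is correct and follows the paper's own proof essentially verbatim: reduce to linear formulae via Lemma~\ref{L:linear}, specialise Proposition~\ref{P:simple} to the coefficient polynomials, and absorb the determinantal conditions \eqref{minoren} as simple linear formulae via the identification of $q(x)\doteq 0$ with $\dep_1(q(x))$. The paper states this more tersely, but the content is the same.
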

\begin{proof}
  By Lemma \ref{L:linear}, it suffices to show that every linear
  formula is is equivalent in $\acfp$ to a conjunction of simple
  linear formulae. This follows immediately from Proposition
  \ref{P:simple}, once we remark that the polynomial equation
  $q(x)\doteq 0$ is equivalent  in $\acfp$ to the simple linear formula
  $\dep_1(q(x))$.
\end{proof}
Together with Corollary \ref{C:Tame_BK}, we deduce another proof of
\cite[Theorem 1.1]{aG16}, valid in all characteristics:

\begin{cor}\label{C:simple}
  In the theory $\acfp$ of proper pairs of algebraically closed field,
  every formula is equivalent in to a boolean combination of simple
  tame formulae.
\end{cor}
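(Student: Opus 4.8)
The plan is to obtain the statement by simply chaining the two preceding results, so the proof will be essentially formal. By Corollary~\ref{C:Tame_BK}, every $\LL_P$-formula is equivalent modulo $\acfp$ to a Boolean combination of tame formulae $\phi_1,\ldots,\phi_r$. Theorem~\ref{T:simple} then asserts that each such $\phi_i$ is in turn equivalent in $\acfp$ to a conjunction $\bigwedge_j \sigma_{i,j}$ of simple linear formulae. The step I would carry out is to substitute these equivalences into the Boolean combination: since each $\phi_i$ may be replaced by its equivalent conjunction, and a conjunction is itself a Boolean combination, the result is a Boolean combination of the simple linear formulae $\sigma_{i,j}$, which is exactly the desired shape. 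No new closure property is needed, as the class of Boolean combinations is visibly closed under replacing subformulae by equivalents.

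The only point to verify is the matching of terminology: a simple linear formula, being of the form $\dep_s(q_1(x),\ldots,q_s(x))$ with the $q_i$ in $\Z[X]$, is in particular a (simple) tame formula, so nothing is lost in phrasing the conclusion as a Boolean combination of simple tame formulae. I expect no genuine obstacle here: all of the substantive work has already been done, first in Corollary~\ref{C:Tame_BK} (reducing arbitrary formulae to tame ones via the $E$-annihilator analysis) and then in Theorem~\ref{T:simple} together with its engine Proposition~\ref{P:simple} (reducing each tame formula to a conjunction of the simple $\dep$-formulae). The present corollary merely records that these two reductions compose, and its proof is a one-line appeal to the two cited results.
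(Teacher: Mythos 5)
Your proof is correct and is exactly the paper's argument: the corollary is stated there as an immediate consequence of Corollary \ref{C:Tame_BK} combined with Theorem \ref{T:simple}, with no further content. Your remark identifying simple linear formulae with simple tame formulae is also the right reading of the terminology.
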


\noindent In particular, we obtain another proof of the
equationality of $\acfp$ in characteristic $0$, for every simple
linear formula is an equation in a differential field: Indeed, the
formula $\dep_s(x_1,\dotsc,x_s)$ is equivalent to the differential
equation $\W(x_1,\dotsc,x_s)\doteq0$.

Corollary \ref{C:simple} implies, together with Corollary
\ref{C:conj_tame}, that a finite conjunction of linear formulae is
again linear. However, we do not think that the same holds for simple
linear formulae.

\noindent A key point in the proof of \cite[Theorem 1.1]{aG16} is the
fact that each $\LLD$-function $\lambda_n^i$ defines, on its domain, a
continuous function with respect to the topology generated by
instances of simple linear formulae \cite[Proposition 2.6]{aG16}. We
will conclude with an easy proof that all functions
$\lambda_n^i\times\id\times\dotsb\times\id$ are continuous with
respect to this topology. For this, we need an auxiliary definition
(cf. Definition \ref{D:lambda_tame}):

\begin{definition}\label{D:lambda_formula}
  The collection of $\lambda_P$-formulae is the smallest collection
  of formulae in the language $\LLD$, closed under
  conjunctions and containing all polynomial equations, such that, for
  any natural number $n$ and polynomials $q_0, \ldots, q_n$ in
  $\Z[x]$, given a $\lambda_P$-formula $\psi(x, z_1,\ldots, z_n)$,
  the formula
  \begin{multline*}
    \phi(x)=\dep_n(q_1(x),\ldots, q_n(x))\;\;\lor\;\;\\
    \bigl(\,\ldef{q_0(x),\ldots, q_n(x)}\;\land\;\, \psi(x,
    \overline{\lambda}_n(q_0(x), \ldots, q_n(x)))\,\bigr)
  \end{multline*}
  is $\lambda_P$-tame, where $\ldef{y_0,\dotsc, y_n}$ is an
    abbreviation for
    \[\neg\dep_n(y_1,\dotsc,y_n)\land\dep_{n+1}(y_0,\dotsc,y_n).\]
\end{definition}

\begin{prop}\label{P:tame_lambda}
Up to equivalence in $\acfp$,  tame formulae  and
  $\lambda_P$-formulae coincide.
\end{prop}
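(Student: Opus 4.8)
The plan is to establish the two inclusions separately; the passage from $\lambda_P$-formulae to tame formulae is the substantial one.

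For the easy inclusion, that every tame formula is equivalent in $\acfp$ to a $\lambda_P$-formula, I would invoke Theorem \ref{T:simple}: every tame formula is equivalent to a conjunction of simple linear formulae, and the class of $\lambda_P$-formulae is closed under conjunction by Definition \ref{D:lambda_formula}. It therefore suffices to observe that each simple linear formula $\dep_s(q_1,\dotsc,q_s)$, with $q_i\in\Z[x]$, is a $\lambda_P$-formula. This holds because taking $q_0=0$ and $\psi=(0\doteq1)$ in the basic construction of Definition \ref{D:lambda_formula} yields the $\lambda_P$-formula
\[\dep_s(q_1,\dotsc,q_s)\;\lor\;\bigl(\ldef{0,q_1,\dotsc,q_s}\;\land\;0\doteq1\bigr),\]
which is equivalent to $\dep_s(q_1,\dotsc,q_s)$, since $0,q_1,\dotsc,q_s$ are always $E$-linearly dependent and $0\doteq1$ is false.

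For the main inclusion I would argue by induction on the number of nested applications of the basic construction in Definition \ref{D:lambda_formula}, showing that every $\lambda_P$-formula is equivalent in $\acfp$ to a tame formula. A polynomial equation $q\doteq0$ is equivalent to the tame formula $\dep_1(q)$, and tame formulae are closed under conjunction by Corollary \ref{C:conj_tame}, so the only case needing attention is the basic construction
\[\phi(x)=\dep_n(q_1,\dotsc,q_n)\;\lor\;\bigl(\ldef{q_0,\dotsc,q_n}\;\land\;\psi(x,\overline{\lambda}_n(q_0,\dotsc,q_n))\bigr),\]
where by induction $\psi(x,z)$ is equivalent to a tame formula $\exists\eta\in P^t\,(\neg\eta\doteq0\land\Land_k p_k(x,z,\eta)\doteq0)$ with each $p_k$ homogeneous in $\eta$. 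Since the $\lambda$-values are elements of $E=P(K)$, reading off their defining equation shows that $\phi(x)$ is equivalent to
\[\dep_n(q_1,\dotsc,q_n)\;\lor\;\exists\zeta\in P^n\Bigl(q_0(x)-\textstyle\sum_i\zeta_i q_i(x)\doteq0\;\land\;\psi(x,\zeta)\Bigr),\]
because when $q_1,\dotsc,q_n$ are $E$-independent any witness $\zeta$ of the second disjunct is forced to equal the unique tuple $\overline{\lambda}_n(q_0,\dotsc,q_n)$, whereas in the $E$-dependent case the first disjunct already holds.

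The hard part will be to recognise the second disjunct as a tame formula, which I would do by homogenising. Introducing a new variable $\zeta_0$ over $P$, I replace the affine constraint by the homogeneous equation $\zeta_0 q_0-\sum_i\zeta_i q_i\doteq0$ and each $p_k(x,\zeta,\eta)$ by its homogenisation $\tilde p_k(x,\zeta_0,\zeta,\eta)=\zeta_0^{\deg_\zeta p_k}\,p_k(x,\zeta/\zeta_0,\eta)$, which has integer coefficients and is homogeneous in $(\zeta_0,\zeta)$ and in $\eta$ separately. The resulting formula
\[\Psi(x)=\exists(\zeta_0,\zeta)\in P^{n+1}\!\setminus 0\;\exists\eta\in P^t\!\setminus 0\;\Bigl(\zeta_0 q_0-\textstyle\sum_i\zeta_i q_i\doteq0\;\land\;\Land_k\tilde p_k(x,\zeta_0,\zeta,\eta)\doteq0\Bigr)\]
has the shape covered by Lemma \ref{L:segre}, hence is equivalent to a tame formula. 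A short case distinction then shows that $\phi$ is equivalent to $\dep_n(q_1,\dotsc,q_n)\lor\Psi$: if $\Psi$ holds with $\zeta_0\neq0$ one rescales to $\zeta_0=1$ and recovers the second disjunct, while if $\zeta_0=0$ then $\zeta\neq0$ witnesses $\dep_n(q_1,\dotsc,q_n)$; conversely the second disjunct implies $\Psi$ by setting $\zeta_0=1$. Since $\dep_n(q_1,\dotsc,q_n)$ is tame and tame formulae are closed under disjunction by Corollary \ref{C:conj_tame}, $\phi$ is equivalent to a tame formula, closing the induction. The only delicate point is the bookkeeping of the degenerate locus $\zeta_0=0$, which must be absorbed into the first disjunct rather than matched on the nose.
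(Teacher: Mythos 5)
Your proposal is correct and follows essentially the same route as the paper: Theorem \ref{T:simple} for the inclusion of tame formulae among $\lambda_P$-formulae, and for the converse an induction whose only nontrivial step homogenises the constraint $q_0\doteq\sum_i\zeta_i q_i$ together with the inner tame formula and invokes Lemma \ref{L:segre}. The sole (harmless) divergence is that you keep $\dep_n(q_1,\dotsc,q_n)$ as an explicit extra disjunct and handle the locus $\zeta_0\doteq 0$ by a case distinction, whereas the paper absorbs that case into the single homogenised existential by taking the homogenisation degree large enough that the $r_k$ vanish at $\zeta_0=0$.
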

\begin{proof}

Notice that every simple linear formula is $\lambda_P$-tame, since
\[\dep_n(y_1,\dotsc,y_n)\;\;\Leftrightarrow\;\;
  \dep_n(y_1,\dotsc,y_n)\lor\left( \ldef{0,y_1,\ldots,y_n}\, \land \,
    (1\doteq 0)\right).\]

\noindent By Theorem \ref{T:simple}, we conclude that all tame
formulae are $\lambda_P$-tame.

\noindent We prove the other inclusion by induction on the degree of the
$\lambda_P$-formula $\phi(x)$. Polynomial equations are clearly
tame. By Corollary \ref{C:conj_tame}, the conjunction of tame formulae
is again tame. Thus, we need only show that $\phi(x)$ is tame,
whenever
\[ \phi(x)=\dep_n(q_1,\ldots, q_n)\;\lor\;\ \bigl(\,\ldef{q_0,\ldots,
    q_n}\;\land\;\, \psi(x, \overline{\lambda}_n(q_0, \ldots,
  q_n))\bigr), \]

\noindent for some tame formula $\psi(x, z_1,\ldots, z_n)$.   Write

\[ \psi(x,z)\;\;=\;\;\exists \zeta \in
  P^s\Bigl(\neg\zeta\doteq 0\;\land\;\Land_{k\leq m}
  p_k(x,z,\zeta)\doteq 0 \Bigr),\]
\noindent for some
polynomials $p_1(x,z,u),\ldots, p_m(x,z,u)$ with integer coefficients
and homogeneous in $u$.

\noindent Homogenising with respect to the variables $z_0, z_1,\ldots,
z_n$, there is some
natural number $N$ such that, for each $k\leq m$,

\[p_k(x,z_0\inv z,u)z_0^N=r_k(x,z_0,z,u),\]

\noindent where $r_k$ is homogeneous in $(z_0,z)$ and in $u$,
separately. Thus,

  \begin{multline*}
    \acfp\models \biggl( \phi(x) \longleftrightarrow
    \biggl(\exists(\zeta_0,\zeta)\in P^{n+1}\;\exists \upsilon \in
    P^s\Bigl(\neg(\zeta_0,\zeta) \doteq0\land\neg\upsilon\doteq 0\; \\
    \land \zeta_0 q_0(x)+\cdots+\zeta_nq_n(x)\doteq0\land \Land_{k\leq
      m} r_k(x,\zeta_0, \zeta,\upsilon)\doteq 0 \Bigr)\biggr) \biggr).
  \end{multline*}

  \noindent The right-hand expression is a tame formula, by Lemma
  \ref{L:segre}, and so is $\phi$, as desired.
\end{proof}



\end{document}